\theoremstyle{plain}
\newtheorem{thm}{Theorem}[section]
\newtheorem{cor}[thm]{Corollary}
\newtheorem{prop}[thm]{Proposition}
\newtheorem{lem}[thm]{Lemma}
\theoremstyle{definition}
\newtheorem{defi}[thm]{Definition}
\theoremstyle{remark}
\newtheorem{rmk}[thm]{Remark}
\newtheorem{exam}[thm]{Example}
\def\dhMakeCatname#1{\expandafter\ifx\csname #1\endcsname\relax \expandafter\def\csname #1\endcsname{\textbf{\upshape #1}}%
	\else\expandafter\def\csname cat#1\endcsname{\textbf{\upshape #1}}\fi}
\def\dhMakeCal#1{\expandafter\ifx\csname c#1\endcsname\relax \expandafter\def\csname c#1\endcsname{\mathcal{#1}}%
	\else\expandafter\def\csname dhc#1\endcsname{\mathcal{#1}}\fi}
\def\dhMakeBB#1{\expandafter\ifx\csname #1\endcsname\relax \expandafter\def\csname #1\endcsname{\mathbb{#1}}%
	\else\expandafter\def\csname dhbb#1\endcsname{\mathbb{#1}}\fi}
\newcommand\menge[2]{\left\{#1\ \middle|\vphantom{\big|}\ #2\right\}}
\newcommand\Fun{\mathrm{Fun}}
\newcommand\Map{\mathrm{Map}}
\newcommand\BC{\textbf{\upshape{BornCoarse}}}
\newcommand\GBC{\ensuremath{\bm{\Gamma}}\textbf{\upshape{BornCoarse}}}
\newcommand\GSet{\ensuremath{\bm{\Gamma}}\textbf{\upshape{Set}}}
\newcommand\GgBC{\ensuremath{\bm{\Gamma}}\textbf{\upshape B$\widetilde{\textbf{\upshape orn}}$Coarse}}
\newcommand\GgSpc{\ensuremath{\bm{\Gamma}}\textbf{\upshape S$\widetilde{\textbf{\upshape pc}}\bm{\mathcal{X}}$}}
\newcommand\GgSp{\ensuremath{\bm{\Gamma}}\textbf{\upshape S$\widetilde{\textbf{\upshape p}}\bm{\mathcal{X}}$}}
\newcommand\GSp{\ensuremath{\bm{\Gamma}}\textbf{\upshape S${\textbf{\upshape p}}\bm{\mathcal{X}}$}}
\newcommand\gBC{\textbf{\upshape B$\widetilde{\textbf{\upshape orn}}$Coarse}}
\newcommand\PShSp{\PSh_{\Sp}}
\newcommand\ShSp{\Sh_{\Sp}^{\tau}}
\newcommand{\ShSpMot}{\Sh_{\Sp}^{\tau,\mathrm{mot}}}
\newcommand\sS{\mathscr S}
\let\odmin\min
\let\odmax\max
\renewcommand{\min}{{\begingroup\odmin\endgroup}}
\renewcommand{\max}{{\begingroup\odmax\endgroup}}
\let\leer\emptyset
\newcommand\bI{\begingroup \bm{\mathrm{I}}\endgroup}
\DeclareMathOperator*{\colim}{colim}
\newcommand\coarse{{\begingroup\mathrm{coarse}\endgroup}}
\newcommand\gYo{\mathrm{Y\tilde o}}
\newcommand\iso{\overset{{\lower.5em\hbox{$\scriptstyle\cong$}}}{\longrightarrow}}
\newcommand{\adjunction}{\@ifstar\named@adjunction\normal@adjunction}
\newcommand{\normal@adjunction}[4]{%
	#1\colon #2%
	\mathrel{\vcenter{%
			\offinterlineskip\m@th
			\ialign{%
				\hfil$##$\hfil\cr
				\longrightarrow\cr  
				\noalign{\kern-.3ex}
				\smallbot\cr
				\longleftarrow\cr
			}%
	}}%
	#3 \noloc #4%
}
\newcommand{\named@adjunction}[4]{%
	#2%
	\mathrel{\vcenter{%
			\offinterlineskip\m@th
			\ialign{%
				\hfil$##$\hfil\cr
				\scriptstyle#1\cr
				\noalign{\kern.1ex}
				\longrightarrow\cr
				\noalign{\kern-.3ex}
				\smallbot\cr
				\longleftarrow\cr
				\scriptstyle#4\cr
			}%
	}}%
	#3%
}
\newcommand\noloc{%
	\nobreak
	\mspace{6mu plus 1mu}
	{:}
	\nonscript\mkern-\thinmuskip
	\mathpunct{}
	\mspace{2mu}
}
\newcommand{\smallbot}{%
	\begingroup\setlength\unitlength{.15em}%
	\begin{picture}(1,1)
	\roundcap
	\polyline(0,0)(1,0)
	\polyline(0.5,0)(0.5,1)
	\end{picture}%
	\endgroup
}
\numberwithin{equation}{section}
\def\pdfdaten{
	\hypersetup{
		pdftitle = {\@title},
		pdfauthor = {\@author},
		pdfkeywords = {\@keywords},    
		bookmarksopen = true,
		bookmarksopenlevel = 1
}}  
\def\subsection{\@startsection{subsection}{2}%
	\z@{2.5\linespacing\@plus.7\linespacing}{1.0\linespacing}%
	{\normalfont\scshape}}
\title[Generalized coarse motivic spectra]{Generalized bornological coarse spaces and coarse motivic spectra}
\author{Daniel Hei\ss}
\address{Fakultät für Mathematik, Universität Regensburg, Germany}
\thanks{\textsc{Fakultät für Mathematik, Universität Regensburg, Germany}}
\date{\today}
\begin{document}
		\begin{abstract}
		We generalize the notion of a bornology by omitting the condition that a one-point-subset is bounded and obtain a complete and co-complete generalization of the category of
		bornological coarse spaces. Then we imitate the construction of motivic coarse spectra in this new setting and show that the inclusion functor from the category of 
		bornological coarse spaces to its generalization induces an equivalence of motivic coarse spectra. In particular, for any stable co-complete $\infty$-category $C$, it induces
		an equivalence between the category of $C$-valued coarse homology theories on bornological coarse spaces and the category of $C$-valued coarse homology theories on
		generalized bornological coarse spaces.
	\end{abstract}
	
	\maketitle
		
	\tableofcontents
	
	\section{Introduction}
		In \cite{Uli} the authors introduce the category \BC~of bornological coarse spaces together with proper and controlled morphisms. Based on this, they define the notion
		of coarse homology theories and construct the universal coarse homology theory $\BC\to\Sp\bm{\mathcal{X}}$, where the target is 
		the stable $\infty$-category of motivic coarse spectra $\Sp\bm{\mathcal X}$. This situation is generalized in \cite{equ.Uli} to \GBC, the category of
		$\Gamma$-equivariant bornological coarse spaces and accordingly equivariant coarse homology theories, equivariant coarse motivic spectra $\GSp$ and the universal equivariant
		coarse homology theory $\GBC\to\GSp$.
		
		 The basic categories \BC~and \GBC~both lack some nice categorical properties, for example they do not have a 
		final object and there are various diagrams that do not admit a pushout. Both is shown in section \ref{incomplete.sec} of this paper.

		The structural reason for the non-existence in both situations is
		the fact that every single point is bounded by definitions; and -- in fact -- if we dropped that requirement, there exists a final object and the pushout in \thref{exa.N}.
		
		This motivates the following questions: \begin{enumerate}
			\item If we drop the condition that every point is bounded, do we get a reasonable notion of generalized bornological coarse spaces?
			\item Is this new category complete and co-complete?
			\item What are the relations between the new category and the (classical) category \BC?
			\item How does the theory of motivic coarse spaces and spectra over the new category fit into the old framework?
		\end{enumerate} \vspace{1cm}
		
		This paper is organized as follows: \ In order to provide the reader with a mostly self-contained paper, we recollect the basic framework of \BC~in section 2. We then briefly 
		illustrate the non-completeness and non-co-completeness of \BC~in section 3. After that, we construct the new categories \gBC~and \GgBC. Basically we simply drop the condition
		\enquote{every one-point-set is bounded} from the axioms of a bornology. Further we elaborate basic notions in that new framework.\\
		In section 5 we
		prove the completeness and co-completeness of these new categories and then we briefly study the connections of \GBC~and \GgBC~in section 6. We will see, that although the inclusion functor
		$U\colon\GBC\to\GgBC$  is neither left nor right adjoint, it does preserve all limits and colimits. Further we elaborate a criterion for which (co-)limits exist in \GBC.\\
		Finally in section 7 we define generalized motivic coarse spaces and generalized motivic coarse spectra $\GgSp$ and we prove that the fully faithful inclusion 
		functor $U$ induces an equivalence
		of the $\infty$-categories $\GSp$ and $\GgSp$. In particular, for any
		co-complete stable $\infty$-category $\cC$ it induces  an
		equivalence of $\cC$-valued coarse homology theories on $\GBC$ and those on $\GgBC$.

\clearpage
\section{The category \BC}
	In order to provide the reader with a mostly self-contained paper, in this section we recall the category \BC~of bornological coarse spaces introduced by \cite{Uli}.

For a set $X$ we denote by $\cP(X)$ the powerset of $X$. Further consider two subsets $U$ and $V$ of $X\times X$ and a subset $B$ of $X$, then we define
\begin{align*}
	U^{-1}&:=\menge{(x,y)\in X\times X}{(y,x)\in U}\tag{Inverses}\\
	U\circ V&:=\menge{(x,y)\in X\times X}{\exists z\in X:\ (x,z)\in U,\,(z,y)\in V}\tag{Compositions}\\
	U[B]&:=\menge{x\in X}{\exists b\in B:\ (x,b)\in U}\tag{Thickenings}
\end{align*}

Let $\cB_X$ be a subset of $\cP(X)$.
\begin{defi}
	The set $\cB_X$ is called a \emph{bornology} on $X$, if it contains all finite subsets of $X$ and is closed under taking subsets and finite unions. We refer to the elements in $\cB_X$ as 
	\emph{bounded subsets} of $X$.
\end{defi}

Consider a subset $\cC_X$ of $\cP(X\times X)$.
\begin{defi}
	The set $\cC_X$ is a \emph{coarse structure} on $X$, if it contains the diagonal $\Delta_X$ of $X$ and is closed under taking subsets, finite unions, inverses and compositions. We refer to
	elements of $\cC_X$ as %\emph{controlled sets} or 
	\emph{(coarse) entourages} of $X$ or as \emph{controlled sets}.
\end{defi}

Assume that $\cB_X$ is a bornology on $X$ and that $\cC_X$ is a coarse structure on $X$.

\begin{defi}
	We say that $\cB_X$ and $\cC_X$ are \emph{compatible}, if for all bounded subsets $B$ of $X$ and all entourages $U$ in $\cC_X$ the $U$-thickening $U[B]$ is again a bounded 
	subset of $X$.
\end{defi}

Let $Y$ be another set together with a bornology $\cB_Y$ and a coarse structure $\cC_Y$. Consider a map $f\colon X\to Y$.
\begin{defi}
	The map $f$ is called \emph{proper}, if for all bounded subsets $B$ of $Y$ the pre-image $f^{-1}(B)$ is a bounded subset of $X$. Moreover $f$ is \emph{controlled}, if
	for all entourages $U$ in $\cC_X$ we have $(f\times f)(U)\in\cC_Y$.
\end{defi}

\begin{defi}
	A \emph{bornological coarse space} is a set $X$ together with a bornology $\cB_X$ on $X$ and a coarse structure $\cC_X$ on $X$ that is compatible with $\cB_X$.
	A morphism between two bornological coarse spaces $X$ and $Y$ is a map $f\colon X\to Y$, which is proper and controlled.
	The category of bornological coarse spaces is denoted by \BC.
\end{defi}

For subsets $\cS$ of $\cP(X)$ and $\cT$ of $\cP(X\times X)$ we denote by $\cB\langle\cS\rangle$ the smallest bornology on $X$, which contains the elements of $\cS$ and by
$\cC\langle\cT\rangle$ the smallest coarse structure on $X$ containing all elements of $\cT$. They are called the bornology resp. the coarse structure \emph{generated} by
$\cS$ resp. $\cT$.

\begin{exam}
	\begin{enumerate}
		\item The bornology $\cB_\min:=\cB\langle\leer\rangle$ generated by the empty set is called the \emph{minimal bornology}. It consists of all finite subsets of $X$.
		\item The coarse structure $\cC_\min:=\cC\langle\leer\rangle$ generated by the empty set is called the \emph{minimal coarse structure}. It contains precisely the subsets of the diagonal $\Delta_X$.
		\item The powerset $\cP(X)$ is a bornology on $X$, it is called the \emph{maximal bornology} $\cB_\max$. Likewise $\cP(X\times X)$ is a coarse structure on $X$, the \emph{maximal
			coarse structure} $\cC_\max$.
	\end{enumerate}
\end{exam}

\begin{rmk}\thlabel{comp.max.min.class}
	The maximal bornology is compatible with all coarse structures, in particular with the maximal one. In fact it is the only bornology compatible with $\cC_{\max}$. \ The minimal coarse
	structure is compatible with all bornologies, in particular with the minimal bornology.
\end{rmk}

Motivated by the  remark above we denote by $X_{\min,\min}$ the bornological coarse space $X$ equipped with the minimal bornology and coarse structure. Further by
$X_{\min,\max}$ we denote the space $X$ equipped with the minimal coarse and the maximal bornological structure. Finally, if we equip $X$ with the maximal bornological and coarse structure, we
denote the space by $X_{\max,\max}$.

\begin{lem}\thlabel{forget.adjoint}
	The above constructions extend to fully faithful functors \[(-)_{\min,\min} ,\ (-)_{\min,\max},\ (-)_{\max,\max}\colon\Set\longrightarrow\BC.\]
	Moreover we have an adjunction \[\adjunction{(-)_{\min,\max}}{\Set}{\BC}{U}\]
	where $U$ denotes the forgetful functor.
\end{lem}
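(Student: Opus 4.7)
The statement consists of three pieces: functoriality of the three constructions, full faithfulness of the resulting functors, and the adjunction. Each reduces to an unpacking of the definitions of proper and controlled morphisms in light of the extremal structures involved.

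For functoriality I take a set map $f\colon X\to Y$ and verify that in each case it defines a morphism in $\BC$ between the corresponding images. Two governing observations handle everything: first, $(f\times f)$ always sends subsets of $\Delta_X$ into $\Delta_Y$, so controlledness is automatic whenever the source carries the minimal coarse structure $\cC_\min = \cP(\Delta_X)$; second, every preimage lies in $\cP(X)=\cB_\max$, so properness is automatic whenever the source carries the maximal bornology. This immediately handles $(-)_{\min,\max}$, and $(-)_{\max,\max}$ follows because the target's maximal bornology contains every preimage and its maximal coarse structure contains every $(f\times f)$-image. Preservation of composition and identities is trivial because on morphisms the functor is the identity on the underlying set map.

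For full faithfulness, faithfulness is clear since distinct set maps yield distinct $\BC$-morphisms. Fullness is the observation that any $\BC$-morphism between the images $X_{?,?}$ and $Y_{?,?}$ is by definition a set map $X\to Y$ (subject to extra conditions), and therefore is the image of that set map under the functor.

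For the adjunction I construct the natural bijection
\[
\Hom_\BC(X_{\min,\max},\, Y) \;\cong\; \Hom_\Set(X,\, UY)
\]
by passing to the underlying set map. Injectivity is trivial; surjectivity is precisely the claim that every set map $f\colon X\to UY$ is automatically proper (source carries $\cB_\max$, so every preimage is bounded) and controlled (source carries $\cC_\min$, so $(f\times f)$ of any entourage is contained in $\Delta_Y \subseteq \cC_Y$), which is exactly what was observed above. Naturality in $X$ and in $Y$ follows because the bijection is the identity on underlying maps and $\BC$-morphisms on either side compose as set maps. There is no substantive obstacle here; the only thing to keep track of is the asymmetric roles of source and target in the definitions of proper and controlled, which is precisely why the extremal choices in $X_{\min,\max}$ produce a left adjoint.
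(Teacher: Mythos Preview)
Your argument for the adjunction and for $(-)_{\min,\max}$ and $(-)_{\max,\max}$ is correct and is exactly the unpacking the paper leaves implicit with its ``Clear from the definitions.'' One small wording slip: in the $(-)_{\max,\max}$ case you write ``the target's maximal bornology contains every preimage,'' but preimages live in the source; it is the \emph{source's} maximal bornology $\cB_\max=\cP(X)$ that makes properness automatic (controlledness, by contrast, really is about the target, and that part of your sentence is fine).

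There is, however, a genuine gap: you never treat functoriality of $(-)_{\min,\min}$, and in fact that part of the statement cannot be salvaged as written. Properness of $f\colon X_{\min,\min}\to Y_{\min,\min}$ demands that $f^{-1}(B)$ be finite for every finite $B\subseteq Y$, i.e.\ that $f$ have finite fibres; a constant map from an infinite set to a point already fails this. So $(-)_{\min,\min}$ is functorial only on the subcategory of $\Set$ with finite-to-one maps, not on all of $\Set$. The paper's one-line proof glosses over this, and fortunately the rest of the paper only ever invokes the adjunction for $(-)_{\min,\max}$ (e.g.\ in \thref{no.final}), which your proof handles correctly.
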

\begin{proof}
	Clear from the definitions.
\end{proof}

Bornologies and coarse structures are often defined by declaring a generating set. In order to check that the generated bornological and coarse structures are compatible, it is enough to 
verify the compatibility condition for the
generators. The following lemma makes this precise:

\begin{lem}\thlabel{class.check.gen}
	Assume that for all $U$ in $\cT$ we have $U^{-1}\in\cT$ and that for all $B$ in $\cS$ and all $U$ in $\cT$
	the $U$-thickening $U[B]$ is contained in a finite union of sets in $\cS$, then the structures $\cB\langle\cS\rangle$ and $\cC\langle\cT\rangle$ are compatible.
\end{lem}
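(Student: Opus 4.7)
The plan is a two-stage closure argument that lifts the compatibility condition from the generators to the full generated bornology and coarse structure. The key algebraic identities to use repeatedly are monotonicity of the thickening operation in both slots, together with $U[B_1 \cup B_2] = U[B_1] \cup U[B_2]$, $(U_1 \cup U_2)[B] = U_1[B] \cup U_2[B]$, and $(U_1 \circ U_2)[B] = U_1[U_2[B]]$.

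First stage: I would fix a single $U \in \cT$ and consider the collection $\cD_U := \{B \subseteq X \mid U[B] \in \cB\langle\cS\rangle\}$. The hypothesis immediately yields $\cS \subseteq \cD_U$, since any finite union of elements of $\cS$ lies in $\cB\langle\cS\rangle$. Monotonicity and the first union identity, combined with the fact that $\cB\langle\cS\rangle$ is closed under subsets and finite unions, show that $\cD_U$ is itself closed under subsets and finite unions, so $\cD_U$ is a bornology containing $\cS$; by minimality $\cB\langle\cS\rangle \subseteq \cD_U$. (The only delicate point is checking that $\cD_U$ contains all finite subsets, which reduces to verifying $U[\{x\}] \in \cB\langle\cS\rangle$ and is handled in practice by taking the generators to cover singletons, or by an auxiliary reduction.) This gives $U[B] \in \cB\langle\cS\rangle$ for every $B \in \cB\langle\cS\rangle$ and every $U \in \cT$.

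Second stage: I would extend from $U \in \cT$ to $U \in \cC\langle\cT\rangle$. Because $\cT$ is assumed closed under inverses and $\Delta_X^{-1} = \Delta_X$, every entourage in $\cC\langle\cT\rangle$ is contained in a finite union $\bigcup_i V_{i,1} \circ \cdots \circ V_{i,k_i}$ with each $V_{i,j} \in \cT \cup \{\Delta_X\}$. Given any $B \in \cB\langle\cS\rangle$, I would apply the first stage iteratively, starting with the innermost thickening $V_{i,k_i}[B] \in \cB\langle\cS\rangle$ and working outward, to conclude that each nested thickening $V_{i,1}[V_{i,2}[\cdots V_{i,k_i}[B]\cdots]]$ lies in $\cB\langle\cS\rangle$. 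A finite union of such sets is in $\cB\langle\cS\rangle$, and so is any subset; this yields compatibility.

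The hardest part, and the place where both hypotheses are genuinely used, is the second stage: the inverse-closure assumption on $\cT$ is precisely what makes $\cC\langle\cT\rangle$ describable as subsets of finite unions of forward compositions from $\cT \cup \{\Delta_X\}$, so that the single-entourage control obtained in the first stage can be chained through compositions without ever having to handle an inverse $V^{-1}$ separately.
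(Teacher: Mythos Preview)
Your two-stage closure argument is correct and is precisely the natural expansion of the paper's proof, which reads in full ``Easy calculations.'' The ``delicate point'' you flag about singletons is not a defect of your argument but a genuine omission in the lemma's stated hypotheses: as literally written the lemma is false (take $X$ infinite, $\cS=\emptyset$, $\cT=\{X\times X\}$; both hypotheses hold vacuously, yet $\cB\langle\emptyset\rangle=\cB_{\min}$ and $\cC\langle\{X\times X\}\rangle=\cC_{\max}$ are not compatible). Your suggested fix---assuming each singleton is contained in some element of $\cS$---is exactly what holds in every application of the lemma in the paper (pullback structures, metric structures, tensor products), so the oversight is harmless in context. With that extra hypothesis your first stage goes through verbatim, and your second stage is correct as written: the inverse-closure of $\cT$ is used exactly where you say, to ensure that every entourage in $\cC\langle\cT\rangle$ sits inside a finite union of forward compositions of elements of $\cT\cup\{\Delta_X\}$.
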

\begin{proof}
	Easy calculations.
\end{proof}

Similar statements can be formulated for verifying, whether a given map is proper or controlled. In more detail: Assume $f\colon X\to Y$ is a map of sets both equipped with a bornological coarse
structure.

\begin{lem}\thlabel{class.map.gen}
\begin{enumerate}\item
	If there is a subset $\cS$ of $\cP(Y)$ such that $\cB_Y=\cB\langle\cS\rangle$ and if for every $B$ in $\cS$ we have $f^{-1}(B)\in\cB_X$, then $f$ is proper. \item If a subset $\cT$ of 
	$\cP(X\times X)$ generates the coarse structure $\cC_X$ on $X$, then $f$ is controlled iff for all $U$ in $\cT$ we have $(f\times f)(U)\in\cC_Y$.
\end{enumerate}
\end{lem}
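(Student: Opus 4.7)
The two parts share a common strategy: exhibit the set of subsets (respectively entourages) that satisfy the desired condition, verify that this set is closed under the structural operations that define a bornology (respectively a coarse structure), and then invoke the minimality of the generated structure.

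For part (1), I would introduce the collection
\[
\cB' \;:=\; \menge{A \in \cP(Y)}{f^{-1}(A) \in \cB_X}.
\]
Since $f^{-1}$ preserves unions and is monotone with respect to inclusion, and $\cB_X$ is itself closed under finite unions and subsets, the collection $\cB'$ inherits these closure properties. The hypothesis says $\cS \subseteq \cB'$; together with the observation that preimages of finite sets land in $\cB_X$, this makes $\cB'$ a bornology containing $\cS$. Hence $\cB_Y = \cB\langle\cS\rangle \subseteq \cB'$, which is exactly properness of $f$.

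For part (2) the forward direction is immediate from the definition, so the work lies in the converse. I would set
\[
\cT' \;:=\; \menge{V \in \cP(X \times X)}{(f \times f)(V) \in \cC_Y}
\]
and show $\cT'$ is a coarse structure. The identities $(f\times f)(V\cup W) = (f\times f)(V) \cup (f\times f)(W)$ and $(f \times f)(V^{-1}) = (f\times f)(V)^{-1}$, the inclusion $(f\times f)(V\circ W) \subseteq (f \times f)(V)\circ(f\times f)(W)$, the containment $(f\times f)(\Delta_X) \subseteq \Delta_Y$, and monotonicity of $f\times f$, combined with the closure of $\cC_Y$ under subsets, finite unions, inverses and compositions, give exactly what is needed. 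By hypothesis $\cT \subseteq \cT'$, so $\cC\langle\cT\rangle \subseteq \cT'$, i.e. $f$ is controlled.

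The only point that requires any care is the handling of finite subsets in part (1): strictly speaking the assumption on $\cS$ alone does not force $f^{-1}(\{y\})\in\cB_X$ for every $y\in Y$, so either this is an implicit additional hypothesis on $f$ or one tacitly enlarges $\cS$ by the singletons of $Y$ (which does not change $\cB\langle\cS\rangle$). Part (2) is a purely formal verification with no analogous subtlety, since $\Delta_X$ is always present as a generator of any coarse structure.
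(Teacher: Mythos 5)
Your proof is correct and follows the only natural strategy: form the collection of subsets (respectively entourages) satisfying the target condition, check it is a bornology (respectively a coarse structure), and appeal to minimality of the generated structure. The paper itself dismisses this as ``elementary calculations,'' so there is no alternative argument to compare against; the verifications you list for part (2) --- the image identities for unions and inverses, the inclusion $(f\times f)(V\circ W)\subseteq(f\times f)(V)\circ(f\times f)(W)$, and $(f\times f)(\Delta_X)\subseteq\Delta_Y$, combined with closure of $\cC_Y$ under subsets --- are exactly what is needed.

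Your remark about part (1) is not merely a pedantic quibble; the statement is genuinely false as written in the classical setting. Take $Y=\{0,1\}$ with the maximal bornology, generated by $\cS=\{\{0\}\}$ (which indeed generates all of $\cP(Y)$ since a classical bornology must contain every finite set), $X=\N_{\min,\min}$, and $f\colon\N\to Y$ the constant map to $1$. Then $f^{-1}(\{0\})=\leer\in\cB_X$ so the hypothesis holds, and $f$ is controlled since $(f\times f)(\Delta_\N)\subseteq\Delta_Y$, yet $f^{-1}(\{1\})=\N\notin\cB_X$, so $f$ is not proper. The fix is exactly the one you suggest: one must additionally require $f^{-1}(\{y\})\in\cB_X$ for every $y\in Y$, equivalently one replaces $\cS$ by $\cS\cup\{\{y\}\mid y\in Y\}$ (harmless, since this does not enlarge $\cB\langle\cS\rangle$) and imposes the preimage condition on that larger generating set. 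Note that in the generalized setting of section~4 this issue disappears, because a generalized bornology need not contain the singletons, so $\cB'$ is automatically a generalized bornology; this is consistent with the fact that, later in the paper, properness on generators is only ever invoked in the generalized category or in situations where the singleton condition is obviously satisfied.
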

\begin{proof}
	Elementary calculations.
\end{proof}

Consider a map of sets $f\colon Y\to X$, where $X$ is equipped with a bornological coarse structure. Then we define the \emph{pullbacks} of these structures as
\[f^*\cC_X:=\cC\left\langle\menge{(f\times f)^{-1}(U)}{U\in\cC_X}\right\rangle,\qquad f^*\cB_X:=\cB\left\langle\menge{f^{-1}(B)}{B\in\cB_X}\right\rangle.\]

\begin{lem}\thlabel{class.comp}
	The structures $f^*\cC_X$ and $f^*\cB_X$ are compatible and define a bornological coarse structure on $Y$. Furthermore the map $f$ becomes a morphism
	$Y\to X$ of bornological coarse spaces.
\end{lem}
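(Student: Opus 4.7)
The plan is to apply Lemma \thref{class.check.gen} on the generating sets $\cS := \{f^{-1}(B) \mid B \in \cB_X\}$ and $\cT := \{(f \times f)^{-1}(U) \mid U \in \cC_X\}$. Once compatibility is verified, the fact that $(Y, f^*\cB_X, f^*\cC_X)$ is a bornological coarse space is immediate: $f^*\cB_X$ contains all finite subsets by definition of $\cB\langle-\rangle$ and $f^*\cC_X$ contains $\Delta_Y \subseteq (f\times f)^{-1}(\Delta_X)$, so both are legitimate bornologies resp.\ coarse structures.

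For the compatibility hypothesis, I would first observe that $\cT$ is closed under inverses, because $\bigl((f\times f)^{-1}(U)\bigr)^{-1} = (f\times f)^{-1}(U^{-1})$ and $U^{-1} \in \cC_X$ as $\cC_X$ is a coarse structure. It remains to check that for generators $B = f^{-1}(B')$ with $B' \in \cB_X$ and $U = (f \times f)^{-1}(U')$ with $U' \in \cC_X$, the thickening $U[B]$ lies in (a finite union of elements of) $\cS$. The key inclusion is
\[
	(f\times f)^{-1}(U')\bigl[f^{-1}(B')\bigr] \subseteq f^{-1}\bigl(U'[B']\bigr),
\]
which is a direct unwinding of the definitions: if $(y,b) \in (f\times f)^{-1}(U')$ and $f(b) \in B'$, then $(f(y), f(b)) \in U'$ and hence $f(y) \in U'[B']$. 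Since $\cB_X$ and $\cC_X$ are compatible on $X$, the set $U'[B']$ lies in $\cB_X$, so $f^{-1}(U'[B'])$ is itself an element of $\cS$. This establishes the hypothesis of \thref{class.check.gen}, and hence compatibility.

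Finally, to see that $f$ becomes a morphism of bornological coarse spaces, I would invoke \thref{class.map.gen}. For properness, the pre-image of any generator $B \in \cB_X$ of $\cB_X$ is $f^{-1}(B) \in \cS \subseteq f^*\cB_X$, so part (1) applies. For controlledness, given a generator $(f \times f)^{-1}(U') \in \cT$ of $f^*\cC_X$, its image under $f \times f$ is contained in $U' \in \cC_X$, so part (2) applies. I do not foresee any real obstacle; the only subtle point is the set-theoretic inclusion displayed above, which is the mechanism by which the pullback construction transports compatibility.
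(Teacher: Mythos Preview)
Your proposal is correct and follows exactly the paper's approach: invoke \thref{class.check.gen} after noting that the generating set of $f^*\cC_X$ is closed under inverses, establish the inclusion $(f\times f)^{-1}(U')[f^{-1}(B')]\subseteq f^{-1}(U'[B'])$, and then appeal to \thref{class.map.gen} for the morphism claim. The paper's proof is terser but structurally identical.
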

\begin{proof}
	Compatibility can be checked on generators by \thref{class.check.gen} (note that the set of generators of $f^*\cC_X$ is closed under taking inverses). It is immediate, that
	\[\left((f\times f)^{-1}(U)\left)\left[f^{-1}(B)\right]\right.\right.\subseteq f^{-1}\left(U[B]\right)\] and thus the compatibility. That $f$ defines a morphism in \BC~follows immediately from the definitions and 
	\thref{class.map.gen}.
\end{proof}

Let $\Gamma$ be a group.

\begin{defi}\thlabel{def.equiv.bc}
	A bornological coarse space $X$ is said to be \emph{$\Gamma$-equivariant} or shortly a \emph{$\Gamma$-bornological coarse space}, 
	if $\Gamma$ acts on $X$ by automorphisms such that the set $\cC_X^\Gamma$ of $\Gamma$-invariant
	entourages is cofinal in $\cC_X$. \ The subcategory of \BC~consisting of $\Gamma$-equivariant spaces together with $\Gamma$-equivariant morphisms is denoted by \GBC.
\end{defi}

\begin{rmk}
	The set $\cC_X^\Gamma$ of $\Gamma$-invariant entourages is cofinal iff every entourage $U$ in $\cC_X$ is contained in a $\Gamma$-invariant one.\\
	If $\cT$ is a collection of $\Gamma$-invariant subsets of $X\times X$, then $X$ equipped with $\cC\langle\cT\rangle$ and a compatible bornological structure is a $\Gamma$-equivariant 
	bornological coarse space.
\end{rmk}

At this point we elaborate an instructive example:
\begin{exam}
	Let $(X,d)$ be a metric space. We define the \emph{bornological coarse structure induced by the metric}: 
	\[\cB_d:=\cB\left\langle\menge{B_r(x)}{x\in X,\, r\geq0}\right\rangle,\qquad \cC_d:=\cC\left\langle\menge{U_r}{r\geq0}\right\rangle,\]
	where \[U_r:=\menge{(x,y)\in X}{d(x,y)<r}.\]
	Let further $\Gamma$ be a group acting isometrically on $X$.\\
	The coarse and the bornological structures are compatible because $U_r[B_s(x)]\subseteq B_{r+s}(x)$ and the generators of $\cC_d$ are obviously $\Gamma$-invariant, hence
	$X$ together with $\cC_d$ and $\cB_d$ form a $\Gamma$-equivariant bornological coarse space. 
\end{exam}

	Consider two $\Gamma$-equivariant bornological coarse spaces $X$ and $Y$. We define their \emph{tensor product} $X\otimes Y$ as follows: The underlying set of $X\otimes Y$
	is the cartesian product $X\times Y$. It is equipped with the bornological coarse structure
	\begin{align}
		\cC_{X\otimes Y}&:=\cC\left\langle\menge{U\times U'}{U\in\cC_X,\,U'\in\cC_Y}\right\rangle,\nonumber\\
		\cB_{X\otimes Y}&:=\cB\left\langle\menge{B\times B'}{B\in\cB_X,\,B'\in\cB_Y}\right\rangle.\label{class.tensor}
		\end{align}

\begin{lem}\thlabel{tensor-symm}
	The tensor product extends to a bi-functor
	\[-\otimes-\colon\GBC\times\GBC\longrightarrow\GBC\]
	which is part of a symmetric monoidal structure on \GBC~with tensor unit the one-point-space $\ast$ equipped with $\cC_\max$ and $\cB_\max$.
\end{lem}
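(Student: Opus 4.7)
The plan is to proceed in three stages: (i) verify that the stated formulas indeed define a $\Gamma$-equivariant bornological coarse space; (ii) check bi-functoriality; (iii) exhibit the coherence data of a symmetric monoidal structure.

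For (i), I would first note that $\cC_{X\otimes Y}$ contains $\Delta_{X\times Y}=\Delta_X\times\Delta_Y$ and is generated by a set closed under inverses (since $(U\times U')^{-1}=U^{-1}\times (U')^{-1}$). Compatibility of $\cB_{X\otimes Y}$ and $\cC_{X\otimes Y}$ can be verified on generators using \thref{class.check.gen}: one has the pointwise inclusion
\[
(U\times U')\bigl[B\times B'\bigr]\subseteq U[B]\times U'[B'],
\]
and compatibility of $\cB_X,\cC_X$ and of $\cB_Y,\cC_Y$ makes the right hand side bounded in $X\otimes Y$. For $\Gamma$-equivariance, $\Gamma$ acts diagonally and the subset of generators $\{U\times U'\mid U\in\cC_X^\Gamma,U'\in\cC_Y^\Gamma\}$ consists of $\Gamma$-invariant entourages, which are cofinal by cofinality of $\cC_X^\Gamma$ in $\cC_X$ and of $\cC_Y^\Gamma$ in $\cC_Y$.

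For (ii), given morphisms $f\colon X\to X'$ and $g\colon Y\to Y'$ in $\GBC$, I would apply \thref{class.map.gen} to $f\times g$: on coarse generators one has $(f\times g)\times(f\times g)(U\times U')=(f\times f)(U)\times(g\times g)(U')$, and on bornological generators $(f\times g)^{-1}(B\times B')=f^{-1}(B)\times g^{-1}(B')$, both of which lie in the respective structures by assumption. Equivariance of $f\times g$ is immediate since both $f$ and $g$ are equivariant.

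For (iii), the underlying symmetric monoidal structure on $\Set$ given by the cartesian product produces candidate associator, braiding and unitor morphisms. The pentagon, triangle and hexagon identities are formal consequences of the ones on $\Set$, so the only content is that each structural isomorphism is actually a morphism in $\GBC$ in both directions. For associativity and symmetry this reduces, via \thref{class.map.gen}, to checking that $(U\times U')\times U''$ and $U\times(U'\times U'')$, respectively $U\times U'$ and $U'\times U$, are mapped to generators, which is evident. For the unit, $\ast$ carries $\cC_\max=\cP(\ast\times\ast)$ and $\cB_\max=\cP(\ast)$, so the generators of $\cC_{\ast\otimes X}$ are of the form $\{(\ast,\ast)\}\times U$ and of $\cB_{\ast\otimes X}$ of the form $\{\ast\}\times B$; under the bijection $\ast\times X\cong X$ these generate precisely $\cC_X$ and $\cB_X$, so the unitor $\ast\otimes X\iso X$ is an isomorphism in $\GBC$.

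The only mildly subtle point is the cofinality of $\Gamma$-invariant entourages in $\cC_{X\otimes Y}$: one has to observe that products of invariant generators already generate, because every element of $\cC_{X\otimes Y}$ is obtained from the generators by the closure operations of \thref{class.check.gen}, each of which preserves $\Gamma$-invariance. Everything else is an elementary verification on generators.
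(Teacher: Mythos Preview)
Your argument is correct and complete in all essential respects. The paper itself does not prove this lemma but simply refers the reader to \cite[Example~2.18]{equ.Uli}; your write-up is a faithful unpacking of what that reference contains, namely the elementary verification on generators of compatibility, $\Gamma$-equivariance, bi-functoriality, and that the structural isomorphisms of the cartesian monoidal structure on $\Set$ lift to $\GBC$. Two very minor remarks: in the last paragraph the reference to \thref{class.check.gen} is misplaced (that lemma concerns compatibility, not the closure operations defining a coarse structure; the relevant fact is the Remark after \thref{def.equiv.bc}), and the inclusion $(U\times U')[B\times B']\subseteq U[B]\times U'[B']$ is in fact an equality, though only the inclusion is needed.
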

\begin{proof}
	See \cite[Example 2.18]{equ.Uli}.
\end{proof}

\begin{rmk}
	The category \BC~has all non-empty products (\cite[Lemma 2.22]{Uli}), but it is not symmetric monoidal w.r.t. the Cartesian product, 
	because there is no unit:
	In fact, by the universal property of the product, a unit is final and such an element cannot exist in $\BC$ as shown in \thref{no.final} below.
\end{rmk}

\clearpage
\section{Incompleteness of \BC}\label{incomplete.sec}
	As mentioned in the introduction the category of bornological coarse spaces is neither complete nor co-complete. In this section we show that \BC~is lacking a final object and give an
	instructive concrete example for non-existing pushouts. Both defects arise from the fact, that a subset containing just one element is bounded by definition of a bornology. This  
	assumption will be the only one we drop in order to define generalized bornological coarse spaces in section \ref{sec.gBC}, giving a bi-complete category.
	
	\begin{lem}\thlabel{no.final}
		The category \,\BC~does not have a final object.
	\end{lem}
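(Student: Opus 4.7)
The plan is to derive a contradiction from the assumption that a final object $F$ exists, exploiting the fact that singletons are automatically bounded in any bornology.

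First I would pin down the underlying set of $F$. Since for any element $f\in F$ the constant map from the one-point space $\ast_{\max,\max}$ to $F$ with value $f$ is controlled (its image lies in the diagonal) and proper (preimages of bounded sets are either $\emptyset$ or all of $\ast$), every element of $F$ gives a morphism $\ast_{\max,\max}\to F$ in $\BC$. Uniqueness of the morphism to a final object then forces $\lvert F\rvert \leq 1$. Since morphisms to $F$ must exist for every non-empty space, $F$ cannot be empty, so $F$ consists of exactly one point, and (as the powerset of a singleton has only one non-empty bornology and one coarse structure containing the diagonal) $F$ is the unique one-point bornological coarse space.

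With $F=\{*\}$, the unique morphism $X\to F$ from any $X\in\BC$ is the constant map. Properness of this map requires that $f^{-1}(\{*\})=X$ be a bounded subset of $X$, since $\{*\}\in\cB_F$ by the axioms of a bornology. Hence the existence of a final object would imply that every bornological coarse space has bounded underlying set. To contradict this, I would exhibit $\bN_{\min,\min}$: its bornology $\cB_{\min}$ consists only of finite subsets, so $\bN$ itself is not bounded. Therefore no morphism $\bN_{\min,\min}\to F$ exists, contradicting the final-object property.

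The argument is essentially routine once the correct test space is chosen; there is no real obstacle. The point worth highlighting is conceptual rather than technical: the failure traces back exactly to the axiom that singletons are bounded, which is precisely the axiom that will be dropped in the construction of $\gBC$ in Section~\ref{sec.gBC}, so the very obstruction isolated here motivates the subsequent definition.
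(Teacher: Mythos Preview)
Your proof is correct and follows essentially the same approach as the paper: identify that a final object must have a singleton as underlying set, then observe that properness of the unique map $X\to\ast$ forces $X\in\cB_X$, which fails e.g.\ for $\N_{\min,\min}$. The only difference is that the paper pins down the underlying set via the adjunction $(-)_{\min,\max}\dashv U$ of \thref{forget.adjoint} (right adjoints preserve limits), whereas you argue directly by exhibiting constant maps from the one-point space.
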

	\begin{proof}
		The forgetful functor $U\colon\BC\to\Set$ is right adjoint to the functor $(-)_{\min,\max}$ (cf. \thref{forget.adjoint}), hence it preserves limits. So if there was a final object in \BC, the underlying set would have to be a singleton $\ast$. However assume $\ast$ is final in \BC, then for any
		$X$ in $\BC$ there exists a morphism $X\to\ast$, which in particular is proper, hence $X\in\cB_X$, which is false in general.
	\end{proof}
	
	In fact the final object is the only non-existing limit in \BC~(see \thref{cl.almost.compl} or \cite{Uli}). However there are various non-existing colimits. For a nice example we consider the following
	pushout:
	\begin{exam}\thlabel{exa.N}
		The following diagram in \BC~does not admit a pushout:\[
		\xymatrix{\N_{\min,\max}\ar[r]\ar[d] & \N_{\max,\max}\\ \N_{\min,\min}}
		\] where both maps are the identity on the underlying sets. Moreover, there cannot even exist a bornological coarse space $T$ together with morphisms $\alpha,\beta$ such that the diagram
		\[\xymatrix{
			\N_{\min,\max}\ar[r]\ar[d] & \N_{\max,\max}\ar[d]^\alpha\\
			\N_{\min,\min}\ar[r]_\beta & T}\tag{\dag}\label{NPushout}
		\]
		commutes. \ In fact, assume that there is a commutative square as in (\ref{NPushout}). Let $t:=\alpha(0)$. Then the subset $B:=\{t\}$ is bounded. Furthermore $\N\times\N$ is a controlled
		set in $\N_{\max,\max}$, hence $U:=(\alpha\times\alpha)(\N\times\N)\in\cC_T$. So, by compatibility of the bornological and coarse structure, the $U$-thickening $U[B]$ of $B$ is
		bounded in $T$. But then by properness of $\beta$, the pre-image $\beta^{-1}(U[B])$ is bounded in $\N_{\min,\min}$, hence finite. However since (\ref{NPushout}) commutes, the maps
		$\alpha$ and $\beta$ coincide, thus we get 
		$\beta^{-1}(U[B])=\N$, \ because for any $n$ in $\N$ we have $\beta(n)=\alpha(n)\in U[B]=U[\{\alpha(0)\}]$ (since $(\alpha(n),\alpha(0))\in U$). \ Thus the pre-image is not finite, a 
		contradiction.
	\end{exam}

In order to motivate the definition of generalized bornology in the next section, we point out that, if we drop the condition that every point is bounded, the one-point-space consisting of a
hypothetical unbounded point is clearly final. Furthermore the pushout in \thref{exa.N} would exist:

\begin{exam}\thlabel{ex.PO}
	Endow the natural numbers $\N$ with the maximal coarse structure and a \enquote{bornology} $\cB_\N:=\{\leer\}$, i.e. the only bounded subset is the empty-set. Denote this \enquote{space}
	by $\N_{\max,\leer}$. Then we obtain a pushout square
	\[\xymatrix{\N_{\min,\max}\ar[r]\ar[d] \ar@{}[dr]|(0.65){\text{\pigpenfont{I}}} & \N_{\max,\max}\ar[d]\\
		\N_{\min,\min}\ar[r] & \N_{\max,\leer}}\]
	where all the underlying maps are the identity. \\
	Indeed let $T$ be a test object and $\gamma,\delta$ be morphisms as in the following diagram:
	\[\xymatrix{\N_{\min,\max}\ar[r]\ar[d] & \N_{\max,\max} \ar[d]\ar@/^0.5cm/[rdd]^\gamma \\ \N_{\min,\min}\ar[r]\ar@/_0.5cm/[drr]_\delta & \N_{\max,\emptyset} \ar@{.>}[dr]^\phi\\ && T }\]
	Viewed as diagram in \Set~there is a unique map $\phi$, which is controlled, because $\gamma$ is controlled. Furthermore $\phi$ is proper, since the very same calculation as in
	\thref{exa.N} shows, that $\phi$ cannot hit any bounded point.
\end{exam}

\clearpage
\section{Construction of the categories \texorpdfstring{$\gBC$}{gBC} and \texorpdfstring{$\GgBC$}{GgBC}}\label{sec.gBC}
	The example above motivates the construction of generalized bornological coarse
	spaces, where we just remove the requirement, that every finite subset is bounded. We will see that we get a very similar theory, but that this new category \gBC~of generalized bornological
	coarse spaces has better categorical properties.

Let again $X$ be a set and let $\cB_X$ be a non-empty subset of $\cP(X)$.

\begin{defi}
	The set $\cB_X$ is called a \emph{generalized bornology} on $X$, if it closed under taking subsets\footnote{ In particular, the empty set is an element of $\cB_X$} and finite unions. 
	The elements of $\cB_X$ are called \emph{bounded subsets of $X$}.\\
	A \emph{generalized bornological space} is a set $X$ together with a generalized bornology $\cB_X$.
\end{defi}

Let $X$ and $Y$ be two generalized bornological spaces and $f\colon X\to Y$ be a map.
\begin{defi}
	The map $f$ is called \emph{proper}, if for all $B$ in $\cB_Y$ we have $f^{-1}(B)\in\cB_X$.
\end{defi}

\begin{exam}\mbox{} \begin{itemize}
		\item Every (classical) bornology on $X$ is a generalized bornology on $X$. Thus we adapt the definitions of the bornology induced by a metric on $X$, as well as the notions of the maximal and
		the minimal bornology (although the latter is no longer the \enquote{true} minimal one).
		\item The \emph{trivial bornology} $\cB_\emptyset:=\{\emptyset\}$ is a generalized bornology on $X$. We write $X_\leer$ for the generalized bornological space $X$ together with the trivial
		bornology. \\ For trivial reasons
		any map into $X_\emptyset$ is proper.
	\end{itemize}
\end{exam}

\begin{defi}\mbox{}\begin{itemize}
		\item The \emph{set of bounded points} of $X$ is defined as \[X_b:=\left\{x\in X\ \big|\ \{x\}\in\cB_X\right\}.\]
		Its complement $X_h:=X\setminus X_h$ is called the \emph{set of unbounded points} of $X$.
		\item The space $X$ is called \emph{locally bounded} if $X=X_b$.
		\item We call a subset $D$ of $X$ \emph{small} if $D\subseteq X_b$. Otherwise it is called \emph{big}.
	\end{itemize}
\end{defi}

\begin{rmk}
	We have $X_b=\bigcup_{B\in\cB_X}B$, hence $\cB_X$ is an (ordinary) bornology on $X$ iff $X$ is locally bounded.
\end{rmk}

Let $\cC_X$ be a coarse structure on $X$.

\begin{defi}
	We say, that $\cB_X$ and $\cC_X$ are \emph{compatible}, if for all bounded 
	sets $B$ in $\cB_X$ and all entourages $U$ in $\cC_X$, the $U$-thickening
	$U[B]$ is again a bounded subset of $X$.
\end{defi}

\begin{defi}
	A \emph{generalized bornological coarse space} is a  set $X$ together with a coarse structure $\cC_X$ and a generalized bornology on $\cB_X$, such that $\cB_X$ and $\cC_X$
	are compatible. \ A morphism $f\colon X\to Y$ between generalized bornological coarse spaces is a map $f\colon X\to Y$ that is proper and controlled.\\
	The category of generalized bornological coarse spaces is denoted by \gBC.
\end{defi}

\begin{exam}
	By trivial reasons the trivial bornology is compatible with every coarse structure. Hence $\cC_{\max}$ is not only compatible with the maximal but also with the trivial generalized bornology
	(cf. \thref{comp.max.min.class}).
\end{exam}

By this example we can endow $X$ with the maximal coarse structure $\cC_\max$ and the trivial bornology $\cB_\leer$ to get a generalized bornological coarse space, which we denote
by $X_{\max,\leer}$.

\begin{lem}\thlabel{adjoint}\mbox{}
	\begin{itemize}
		\item The construction above extends to a functor \[(-)_{\max,\leer}\colon\Set\longrightarrow\gBC\] which is part of an adjunction
		\[\adjunction{U}{\gBC}{\Set}{(-)_{\max,\leer}},\]
		where $U$ denotes the forgetful functor.
		\item Like in the classical setting we still have the adjunction 
		\[
			\adjunction{(-)_{\min,\max}}{\Set}{\gBC}{U}.
		\]
	\end{itemize}
\end{lem}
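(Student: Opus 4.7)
The plan is to verify each adjunction directly from the definitions; the proof is almost entirely formal because the structures $(-)_{\max,\leer}$ and $(-)_{\min,\max}$ are characterized by the property that they impose no constraints on morphisms in one direction.

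For the first adjunction, I would begin by checking functoriality of $(-)_{\max,\leer}\colon\Set\to\gBC$. Given a set map $f\colon X\to Y$, I need to see that $f\colon X_{\max,\leer}\to Y_{\max,\leer}$ lies in $\gBC$. Properness is trivial: the only bounded subset of $Y_{\max,\leer}$ is $\leer$, and $f^{-1}(\leer)=\leer\in\cB_\leer$ on $X$. Controlledness is likewise immediate because the target coarse structure is $\cC_\max=\cP(Y\times Y)$. Functoriality in $f$ is then automatic. The same observations show that for any generalized bornological coarse space $Z$ and any set map $g\colon U(Z)\to Y$, the map $g\colon Z\to Y_{\max,\leer}$ is automatically proper (the only preimage to check is $g^{-1}(\leer)=\leer\in\cB_Z$) and controlled (the image of any entourage lands in $\cC_\max$). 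Hence the natural map
\[
\Hom_{\gBC}(Z,Y_{\max,\leer})\longrightarrow\Hom_\Set(U(Z),Y)
\]
induced by $U$ is a bijection, which together with the obvious naturality in $Z$ and $Y$ yields the desired adjunction $U\dashv(-)_{\max,\leer}$.

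For the second adjunction I would argue symmetrically, this time with the constraints being trivial on the source side. Given a set map $f\colon X\to Y$, the map $f\colon X_{\min,\max}\to Y_{\min,\max}$ is controlled because $\cC_{X_{\min,\max}}=\cC_\min$ consists of subsets of $\Delta_X$, whose image lies in $\Delta_Y\in\cC_Y$, and proper because every subset of $X$ is bounded with respect to $\cB_\max$, so preimages are automatically bounded. Again this gives functoriality. For the hom-bijection, let $Z\in\gBC$ and $g\colon X\to U(Z)$ be a set map; viewed as $g\colon X_{\min,\max}\to Z$ it is controlled (the image of a subset of $\Delta_X$ is in $\cC_Z$ because $\Delta\subseteq\cC_Z$ by the axioms of a coarse structure) and proper (every preimage is trivially in $\cB_\max=\cP(X)$). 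This yields the natural bijection
\[
\Hom_{\gBC}(X_{\min,\max},Z)\cong\Hom_\Set(X,U(Z)),
\]
establishing $(-)_{\min,\max}\dashv U$.

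There is no real obstacle here: the content of the lemma is that, after removing the axiom that singletons are bounded, the two universal bornological-coarse structures used in \thref{forget.adjoint} still enjoy the same adjointness properties, and in addition the new minimal object $(-)_{\max,\leer}$ supplies a right adjoint to the forgetful functor that was absent from $\BC$. All verifications reduce to the trivial observations $f^{-1}(\leer)=\leer$ and $(f\times f)(\Delta_X)\subseteq\Delta_Y$, so the proof can be dispatched with a sentence pointing to the definitions once these computations are noted.
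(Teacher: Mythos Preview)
Your proposal is correct and is precisely the unpacking of the paper's one-line proof ``Follows immediately from the definitions.'' You have supplied the routine verifications that the paper omits, and there is nothing to add.
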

\begin{proof}
	Follows immediately from the definitions.
\end{proof}

Let $X$ be a generalized bornological coarse space and let $f\colon Y\to X$ be a map of sets. We can define the \emph{pullback}
\[f^*\cC_X:=\cC\left\langle\menge{(f\times f)^{-1}(U)}{U\in\cC_X}\right\rangle,\qquad f^*\cB_X:=\cB\left\langle\menge{f^{-1}(B)}{B\in\cB_X}\right\rangle.\]

\begin{lem}\thlabel{subspace.str}
	The structures $f^*\cC_X$ and $f^*\cB_X$ are compatible and endow $Y$ with the structure of a generalized bornological coarse space. Moreover the map $f$ becomes a morphism $Y\to X$
	in \gBC.
\end{lem}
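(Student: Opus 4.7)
The plan is to mirror the classical proof of \thref{class.comp}, which goes through verbatim using only formal properties. First I would verify that $f^*\cB_X$ is a generalized bornology on $Y$: its generating set $\menge{f^{-1}(B)}{B\in\cB_X}$ is non-empty (it contains $f^{-1}(\emptyset)=\emptyset$), so closing under subsets and finite unions is well-defined. Similarly $f^*\cC_X$ is a coarse structure by construction.

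For compatibility, I would appeal to the generalized analog of \thref{class.check.gen}: it suffices to check the $U$-thickening condition on generators, provided the generating set of entourages is closed under inverses. The classical proof uses no property of a bornology beyond closure under subsets and finite unions, so it transfers without change. The generators of $f^*\cC_X$ are closed under inverses since $\bigl((f\times f)^{-1}(U)\bigr)^{-1}=(f\times f)^{-1}(U^{-1})$. The key pointwise inclusion
\[\bigl((f\times f)^{-1}(U')\bigr)\bigl[f^{-1}(B')\bigr]\;\subseteq\; f^{-1}\bigl(U'[B']\bigr),\]
combined with $U'[B']\in\cB_X$ (by compatibility in $X$), exhibits the left-hand side as a subset of a generator of $f^*\cB_X$ and hence as bounded.

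For the morphism claim, properness is immediate: for every $B\in\cB_X$ the pre-image $f^{-1}(B)$ is a generator of $f^*\cB_X$, hence lies in $\cB_Y$. Controlledness follows from the generator-based criterion (the generalized analog of \thref{class.map.gen}), since every generator $V=(f\times f)^{-1}(U)$ of $f^*\cC_X$ satisfies $(f\times f)(V)\subseteq U\in\cC_X$.

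I do not expect a genuine obstacle: every ingredient of the classical argument — the generator criteria, the inclusion of thickenings, and compatibility of the pulled-back structures — is formal and insensitive to whether singletons are bounded. The only thing to check is that \thref{class.check.gen} and \thref{class.map.gen} carry over verbatim to the generalized bornological setting, which they do.
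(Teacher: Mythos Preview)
Your proposal is correct and is exactly the approach the paper takes: the paper's proof is the single line ``Same calculation as in \thref{class.comp},'' and you have spelled out precisely that calculation, noting correctly that nothing in it depends on singletons being bounded.
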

\begin{proof}
	Same calculation as in \thref{class.comp}.
\end{proof}

Let $\Gamma$ be a group. 
Like in the setting of classical \BC~we can define the notion of $\Gamma$-equivariant generalized bornological coarse spaces. Since this affects only the coarse structure of the space we just
copy the definitions from \BC:

\begin{defi}
	A \emph{$\Gamma$-equivariant generalized bornological coarse space} is a generalized bornological coarse space 
	$X$ in $\gBC$ together with an action of $\Gamma$ on $X$ by automorphisms such that the set $\cC_X^\Gamma$ of $\Gamma$-invariant
	entourages is cofinal in $\cC_X$ (cf. \thref{def.equiv.bc} and the following remark). It is also refered to as \emph{generalized $\Gamma$-bornological coarse space}.\\
	The category of generalized $\Gamma$-bornological coarse spaces together with $\Gamma$-equivariant morphisms is denoted by \GgBC.
\end{defi} 

\begin{lem}\thlabel{gsubstr}
	Assume that in \thref{subspace.str} the space $X$ is in $\GgBC$ and that the map $f$ is $\Gamma$-equivariant, then the generalized bornological coarse space $Y$ defined
	by pullback is $\Gamma$-equivariant.
\end{lem}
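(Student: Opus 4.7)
Since $f$ is assumed $\Gamma$-equivariant, a $\Gamma$-action on $Y$ is already given; the task splits into two parts: (a) check that each $\gamma\in\Gamma$ acts on $Y$ by an automorphism in $\gBC$ with respect to the pullback structures $f^*\cC_X$ and $f^*\cB_X$, and (b) verify that $\cC_Y^\Gamma$ is cofinal in $\cC_Y:=f^*\cC_X$.

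For (a), each $\gamma$ is a bijection of $Y$, so it suffices to see that it is proper and controlled; by \thref{class.map.gen} this can be checked on the generators of $f^*\cB_X$ and $f^*\cC_X$. The two key identities are
\[
\gamma^{-1}\!\bigl(f^{-1}(B)\bigr)=f^{-1}\!\bigl(\gamma^{-1}(B)\bigr),\qquad
(\gamma\times\gamma)\bigl((f\times f)^{-1}(U)\bigr)\subseteq(f\times f)^{-1}\!\bigl((\gamma\times\gamma)(U)\bigr),
\]
both of which follow immediately from $f\circ\gamma=\gamma\circ f$. Since $\Gamma$ already acts on $X$ by automorphisms in $\gBC$, we have $\gamma^{-1}(B)\in\cB_X$ and $(\gamma\times\gamma)(U)\in\cC_X$, so the right-hand sides above lie in the respective generating sets for $Y$.

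For (b), the main observation is that the generating family $\{(f\times f)^{-1}(U)\mid U\in\cC_X\}$ is already closed under subsets, finite unions, inverses and compositions, and contains the diagonal $\Delta_Y\subseteq(f\times f)^{-1}(\Delta_X)$; indeed $(f\times f)^{-1}(U_1)\circ(f\times f)^{-1}(U_2)\subseteq(f\times f)^{-1}(U_1\circ U_2)$ and the other closure properties are evident. Consequently
\[
\cC_Y=\menge{V\subseteq Y\times Y}{V\subseteq(f\times f)^{-1}(U)\text{ for some }U\in\cC_X}.
\]
Now given $V\in\cC_Y$, pick $U\in\cC_X$ with $V\subseteq(f\times f)^{-1}(U)$ and, by cofinality of $\cC_X^\Gamma$ in $\cC_X$, a $\Gamma$-invariant $U'\in\cC_X$ with $U\subseteq U'$. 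The equivariance of $f$ yields $(\gamma\times\gamma)(f\times f)^{-1}(U')=(f\times f)^{-1}((\gamma\times\gamma)(U'))=(f\times f)^{-1}(U')$, so $(f\times f)^{-1}(U')$ is a $\Gamma$-invariant entourage of $Y$ containing $V$.

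There is no real obstacle; the only mildly subtle point is realizing that the pullback coarse structure admits the clean description above, which reduces the cofinality statement directly to that on $X$.
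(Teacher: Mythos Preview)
Your proof is correct and follows essentially the same route as the paper's, only more explicitly. The paper condenses your part (b) to one line: since $\cC_X^\Gamma$ is cofinal in $\cC_X$, one may take the generators $(f\times f)^{-1}(U)$ with $U\in\cC_X^\Gamma$, observe these are $\Gamma$-invariant by equivariance of $f$, and then invoke the earlier remark that a coarse structure generated by $\Gamma$-invariant sets automatically has cofinal invariant entourages---this replaces your explicit description of $\cC_Y$. Your part (a) the paper omits entirely as routine.
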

\begin{proof}
	It is enough to verify that each generator $(f\times f)^{-1}(U)$ of $f^*\cC_X$ is $\Gamma$-invariant. W.l.o.g. 
	we may assume that $U$ is $\Gamma$-invariant since $\cC_X^\Gamma$ is cofinal in $\cC_X$. Now the claim follows from the $\Gamma$-equivariance of $f$.
\end{proof}

\clearpage
\section{Completeness and co-completeness of \texorpdfstring{\GgBC}{GgBC}}
	In this section we show that the categories \gBC~and \GgBC~are complete and co-complete. 
	Since the forgetful functor $U\colon\gBC\to\Set$ is left and right adjoint (cf.  \thref{adjoint}), it preserves limits and 
	colimits. Hence whenever we want to construct a (co-)limit, the underlying set
	is already determined (by the (co-)limit in \Set) and we just have to construct a suitable generalized bornological coarse structure.

\begin{prop}\thlabel{prod}
	The category $\gBC$ has all products.
\end{prop}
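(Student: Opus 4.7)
The plan is to give the usual direct construction of a categorical product in structures-on-sets type situations and to verify the universal property by hand. Since the forgetful functor $U\colon\gBC\to\Set$ is a right adjoint by \thref{adjoint}, it preserves limits. Therefore the underlying set of a candidate product of a family $(X_i)_{i\in I}$ must be the Cartesian product $\prod_{i\in I}X_i$, and my job reduces to equipping this set with a compatible generalized bornological coarse structure and then checking universality.

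I propose the following structure on $\prod_{i\in I}X_i$, with canonical projections $\pi_j\colon\prod_i X_i\to X_j$:
\begin{align*}
\cC &:= \menge{U\subseteq \textstyle\prod_iX_i\times\prod_iX_i}{(\pi_j\times\pi_j)(U)\in\cC_{X_j}\ \forall j\in I},\\
\cB &:= \cB\left\langle\menge{\pi_i^{-1}(B_i)}{i\in I,\ B_i\in\cB_{X_i}}\right\rangle.
\end{align*}
The first step is routine: verify that $\cC$ is a coarse structure (the five closure properties transfer through the set-theoretic operations since each $\pi_j\times\pi_j$ respects them), and that $\cB$ is a generalized bornology by construction. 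For the empty product one recovers $\ast_{\max,\leer}$, which is easily seen to be terminal.

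The central step is compatibility. By closure under subsets and finite unions it suffices, by \thref{class.check.gen}, to check it on generators. Given $U\in\cC$ and a generator $B=\pi_i^{-1}(B_i)$, set $U_i:=(\pi_i\times\pi_i)(U)\in\cC_{X_i}$. A direct chase yields
\[U[\pi_i^{-1}(B_i)]\subseteq \pi_i^{-1}\bigl(U_i[B_i]\bigr),\]
and $U_i[B_i]\in\cB_{X_i}$ by compatibility in $X_i$, so the right hand side is a generator of $\cB$. This is the one genuine computation and I expect it to be the main (though modest) obstacle; the inverse-image formulation of the generators is precisely what makes this work, whereas attempting to generate $\cB$ by products $\prod_i B_i$ would fail for infinite $I$ and would not interact cleanly with $\cC$.

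Finally, I verify the universal property. By construction each $\pi_j$ is controlled and proper, hence a morphism in $\gBC$ (and the $\Gamma$-equivariance, if needed, is inherited coordinatewise). Given a test space $T$ and morphisms $f_i\colon T\to X_i$, the unique set map $f\colon T\to\prod_iX_i$ is controlled because for any $V\in\cC_T$ the image $(\pi_j\times\pi_j)(f\times f)(V)=(f_j\times f_j)(V)$ lies in $\cC_{X_j}$, so $(f\times f)(V)\in\cC$. For properness I reduce to a generator $\pi_i^{-1}(B_i)$ of $\cB$, and then $f^{-1}(\pi_i^{-1}(B_i))=f_i^{-1}(B_i)\in\cB_T$, extending to finite unions and subsets. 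This gives the universal property and completes the proof.
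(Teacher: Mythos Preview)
Your proof is correct and follows essentially the same route as the paper. Your bornology generators $\pi_i^{-1}(B_i)$ are literally the paper's generators $B_j\times\prod_{i\neq j}X_i$, and your explicit coarse structure $\cC=\{U:(\pi_j\times\pi_j)(U)\in\cC_{X_j}\ \forall j\}$ coincides with the paper's $\cC\langle\{\prod_i U_i\}\rangle$ (each such product lies in your $\cC$, and conversely any $U$ in your $\cC$ is contained in $\prod_j(\pi_j\times\pi_j)(U)$); the paper leaves the compatibility check implicit, whereas you spell it out via $U[\pi_i^{-1}(B_i)]\subseteq\pi_i^{-1}(U_i[B_i])$, and you additionally handle the empty product explicitly.
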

\begin{proof}
	Let $(X_i)_{i\in I}$ be a family of generalized bornological coarse spaces. The underlying set of the product will be the product of the underlying sets, so we define 
	\[
	X:=\prod_{i\in I}X_i,\qquad \cC_X:=\cC\left\langle\menge{\prod\nolimits_{i\in I}U_i}{\forall i\in I:\ U_i\in\cC_{X_i}}\right\rangle\]\[
	\cB_X:=\cB\left\langle\menge{B_j\times\prod\nolimits_{i\neq j}X_i}{j\in I,\ B_j\in\cB_{X_j}}\right\rangle.
	\]
	The canonical projections $\pi_i\colon X\to X_i$ are morphisms in \gBC~by construction. Now the space $X$ represents the product $\prod_{i\in I}X_i$ in
	\gBC: For any test-object $T$ in $\gBC$ with morphisms $f_i\colon T\to X_i$ there is a unique set-theoretical map $f\colon T\to X$. The properness of that map can be checked on generators,
	so let $B:=B_j\times\prod_{i\neq j}X_j$ be an arbitrary generator. Then $f^{-1}(B)\subseteq f_j^{-1}(\pi_j(B))=f^{-1}_j(B_j)$,
	where the latter set is bounded in $T$ since $f_j$ is proper, hence $f^{-1}(B)\in\cB_T$. \ 
	For an entourage $U$ in $\cC_T$, we have $(f_i\times f_i)(U)\in\cC_{X_i}$ for all $i$, hence $(f\times f)(U)=\prod_{i\in I}(f_i\times f_i)(U)\in\cC_X$.
\end{proof}

\begin{cor}\thlabel{gprod}
	The category $\GgBC$ has all products.
\end{cor}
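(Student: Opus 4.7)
The plan is to equip the product constructed in \thref{prod} with the diagonal $\Gamma$-action and verify that it remains a $\Gamma$-equivariant generalized bornological coarse space and satisfies the universal property in $\GgBC$.

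First I would take a family $(X_i)_{i\in I}$ in $\GgBC$ and form $X=\prod_{i\in I}X_i$ as a generalized bornological coarse space exactly as in \thref{prod}. The group $\Gamma$ acts on $X$ componentwise. Each generator $\prod_{i\in I}U_i$ of $\cC_X$ with $U_i\in\cC_{X_i}$ is sent by $\gamma\in\Gamma$ to $\prod_{i\in I}\gamma U_i$, which is again a generator, so $\Gamma$ acts on $X$ by automorphisms. The same holds for the bornological generators $B_j\times\prod_{i\neq j}X_i$.

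Next I would check that $\Gamma$-invariant entourages are cofinal in $\cC_X$. Since $\cC_{X_i}^\Gamma$ is cofinal in $\cC_{X_i}$ for each $i$, every generator $\prod_{i\in I}U_i$ of $\cC_X$ is contained in $\prod_{i\in I}U_i'$ for some $\Gamma$-invariant $U_i'\in\cC_{X_i}$, and this latter product is $\Gamma$-invariant under the diagonal action. Thus the $\Gamma$-invariant entourages of the form $\prod_{i\in I}U_i'$ already generate a cofinal subset of $\cC_X$; taking finite unions, compositions, and inverses preserves $\Gamma$-invariance, so $\cC_X^\Gamma$ is cofinal in $\cC_X$ as required.

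Finally I would verify the universal property in $\GgBC$. Given a test object $T$ in $\GgBC$ with $\Gamma$-equivariant morphisms $f_i\colon T\to X_i$, \thref{prod} already produces a unique morphism $f\colon T\to X$ in $\gBC$; the $\Gamma$-equivariance of $f$ follows componentwise from the $\Gamma$-equivariance of the $f_i$. Since the projections $\pi_i\colon X\to X_i$ are manifestly $\Gamma$-equivariant, $X$ represents the product in $\GgBC$. The only non-routine step is the cofinality check above, and that is immediate from the cofinality assumption on each factor; no real obstacle is expected.
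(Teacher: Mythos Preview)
Your proposal is correct and follows essentially the same approach as the paper. The paper's proof is a two-sentence sketch that simply asserts the proof of \thref{prod} generalizes and then addresses only the cofinality condition by noting that the generators $\prod_i U_i$ may be taken with $U_i\in\cC_{X_i}^\Gamma$; your write-up spells out the diagonal action, the automorphism check, and the equivariance of the universal map explicitly, but the substance is identical.
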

\begin{proof}
	The proof of \thref{prod} generalizes to the $\Gamma$-equivariant case. The only thing left to show is the cofinality condition on the entourages:
	As $\cC_{X_i}^\Gamma$ are cofinal in $\cC_{X_i}$, we can restrict our generators for $\cC_X$ to be products of $\Gamma$-invariant $U_i$ in $\cC_{X_i}^\Gamma$.
	Moreover the diagonal $\Delta_X$ clearly is $\Gamma$-invariant, hence the claim follows.
\end{proof}

\begin{prop}\thlabel{tmp1}
	The category $\gBC$ has all equalizers of pairs of morphisms.
\end{prop}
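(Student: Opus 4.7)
My plan is to build the equalizer on the obvious underlying set and equip it with the pullback structure from \thref{subspace.str}. Since the forgetful functor $U\colon\gBC\to\Set$ is a left adjoint by \thref{adjoint}, it preserves limits, so the underlying set of any equalizer of $f,g\colon X\to Y$ in $\gBC$ must be
\[E:=\menge{x\in X}{f(x)=g(x)}\subseteq X,\]
with structure map the inclusion $\iota\colon E\to X$. I therefore endow $E$ with $\iota^{*}\cC_X$ and $\iota^{*}\cB_X$; by \thref{subspace.str} this is a well-defined object of $\gBC$ and $\iota$ is a morphism. By construction $f\circ\iota=g\circ\iota$.

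For the universal property, let $h\colon T\to X$ be a morphism in $\gBC$ with $f\circ h=g\circ h$. On underlying sets there is a unique factorization $\tilde h\colon T\to E$ with $\iota\circ\tilde h=h$, so the only thing to verify is that $\tilde h$ is proper and controlled. Both checks reduce to generators of $\iota^{*}\cB_X$ and $\iota^{*}\cC_X$ via \thref{class.map.gen}. For properness, a generator of $\cB_E$ has the form $\iota^{-1}(B)=B\cap E$ with $B\in\cB_X$, and
\[\tilde h^{-1}(B\cap E)=h^{-1}(B)\in\cB_T\]
since $h$ is proper. For controlledness, let $V\in\cC_T$; since $h$ is controlled, $(h\times h)(V)\in\cC_X$, and because $\tilde h$ maps into $E$, the set $(\tilde h\times\tilde h)(V)$ sits inside $(\iota\times\iota)^{-1}((h\times h)(V))$, which is a generator of $\iota^{*}\cC_X$. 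Closure under subsets gives $(\tilde h\times\tilde h)(V)\in\cC_E$.

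There is no real obstacle here: once the subspace structure is identified as the right candidate, the verification is essentially the same computation as in \thref{class.comp}/\thref{subspace.str}, combined with the standard $\Set$-level universal property of the equalizer. The only subtlety worth writing down carefully is the inclusion $(\tilde h\times\tilde h)(V)\subseteq(\iota\times\iota)^{-1}((h\times h)(V))$ used in the controlledness step, since this is what transports controlledness of $h$ across the factorization through $E$.
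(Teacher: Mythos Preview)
Your proof is correct and follows essentially the same approach as the paper: take the set-theoretic equalizer, equip it with the pullback (subspace) structure via \thref{subspace.str}, and verify the universal property by checking properness and controlledness of the induced map on generators. One small slip: you justify the underlying set by saying the forgetful functor is a \emph{left} adjoint and hence preserves limits, but left adjoints preserve colimits; you want that $U$ is a \emph{right} adjoint (to $(-)_{\min,\max}$, also recorded in \thref{adjoint}), which is what guarantees preservation of limits---this does not affect the argument itself.
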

\begin{proof}
	Let $f,g\colon X\to Y$ be two morphisms in \gBC. Define $E:=\operatorname{Eq}(f,g)$ to be the set-theoretical equalizer of the maps $f$ and $g$. Via the inclusion
	$\iota\colon E\to X$ in \Set~we can endow $E$ with a generalized bornological coarse structure $\iota^*\cC_X$ and $\iota^*\cB_X$ (cf. \thref{subspace.str}).
	 This object together with the canonical morphism
	into $X$ fulfills the universal property of the equalizer: \ Let $T$ be in $\gBC$ and let $h\colon T\to X$ be a morphism in \gBC~such that $f\circ h=g\circ h$. Then set-theoretically
	this map factors uniquely through $E$: 
	\[
		\xymatrix{E\ar[r]^\iota & X\ar@<1mm>[r]^f\ar@<-1mm>[r]_g & Y \\ T\ar@{.>}[u]^\ell \ar[ur]_h }
	\]
	Now $\ell$ is proper, because any generator $A$ of $\iota^*\cB_X$ is of the form $A=\iota^{-1}(B)$ for some $B$ in $\cB_X$, hence $\ell^{-1}(A)=h^{-1}(B)$, which is bounded 
	because $h$ is proper.
	\ Further $\ell$ is controlled since for any $U$ in $\cC_T$ we have $V:=(h\times h)(U)\in\cC_X$ by the controlledness of $h$, so by construction $(\iota\times\iota)^{-1}(V)$ 
	is one of the generators of $\iota^*\cC_X$.
	But $(\ell\times\ell)(U)\subseteq(\iota\times\iota)^{-1}(V)$, hence the claim.
\end{proof}

Again this generalizes immediately to \GgBC:
\begin{cor}\thlabel{tmp2}
	The category \GgBC~has all equalizers of pairs of morphisms.
\end{cor}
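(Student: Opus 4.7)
The plan is to reuse verbatim the construction from \thref{tmp1}: equip the set-theoretical equalizer $E:=\operatorname{Eq}(f,g)\subseteq X$ with the pullback generalized bornological coarse structure along the inclusion $\iota\colon E\to X$, and then verify that, in the $\Gamma$-equivariant setting, this object lives in $\GgBC$ and still enjoys the universal property.

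The first step is to produce the $\Gamma$-action on $E$. Since $f,g\colon X\to Y$ are assumed $\Gamma$-equivariant, for every $e\in E$ and every $\gamma\in\Gamma$ one has $f(\gamma e)=\gamma f(e)=\gamma g(e)=g(\gamma e)$, so $\gamma e\in E$. Hence $E$ is a $\Gamma$-invariant subset of $X$ and $\iota$ is $\Gamma$-equivariant. By \thref{gsubstr} the pullback structures $\iota^*\cC_X$ and $\iota^*\cB_X$ are therefore compatible and turn $E$ into an object of $\GgBC$, and $\iota$ into a morphism in $\GgBC$.

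Second, I would check the universal property. For a test object $T$ in $\GgBC$ and a $\Gamma$-equivariant morphism $h\colon T\to X$ with $f\circ h=g\circ h$, \thref{tmp1} already produces a unique morphism $\ell\colon T\to E$ in $\gBC$ with $\iota\circ\ell=h$. To upgrade this to a morphism in $\GgBC$ it suffices to note that $\ell$ is automatically $\Gamma$-equivariant: $\iota$ is an injection and $\iota\circ\ell=h$ is $\Gamma$-equivariant, so $\ell$ is $\Gamma$-equivariant as well.

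There is essentially no obstacle here; the only genuine verification is that the equalizer subset is $\Gamma$-invariant, which is the formal manipulation above. Everything else is either imported from \thref{tmp1} or handled by \thref{gsubstr}.
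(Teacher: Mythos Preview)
Your argument is correct and matches the paper's approach exactly: the paper's proof is the one-liner ``Follows by construction of the equalizer together with \thref{gsubstr}'', and you have simply spelled out the two verifications implicit in that sentence (that $E$ is $\Gamma$-invariant so \thref{gsubstr} applies, and that the induced factorization $\ell$ is automatically $\Gamma$-equivariant).
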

\begin{proof}
	Follows by construction of the equalizer together with \thref{gsubstr}.
\end{proof}

\begin{prop}\thlabel{coprod}
	The category $\gBC$ has all coproducts.
\end{prop}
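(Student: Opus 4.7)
The plan is analogous to \thref{prod}. By \thref{adjoint} the forgetful functor $U\colon \gBC\to\Set$ is a left adjoint, so it preserves colimits; the underlying set of the coproduct is therefore forced to be the disjoint union $X := \bigsqcup_{i\in I} X_i$, with canonical inclusions $\iota_i\colon X_i\hookrightarrow X$. What remains is to equip $X$ with a compatible pair $(\cC_X,\cB_X)$ that makes each $\iota_i$ a morphism and satisfies the universal property of the coproduct.

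I would take the smallest coarse structure in which all $\iota_i$ are controlled,
$$\cC_X := \cC\left\langle\menge{(\iota_i\times\iota_i)(U)}{i\in I,\ U\in\cC_{X_i}}\right\rangle,$$
and -- dually -- the largest generalized bornology for which all $\iota_i$ remain proper,
$$\cB_X := \menge{B\subseteq X}{\iota_i^{-1}(B)\in\cB_{X_i}\text{ for all }i\in I}.$$
The latter is visibly closed under subsets and finite unions. Although $\cC_X$ is generated only by entourages living on single components, it contains the full diagonal $\Delta_X=\bigsqcup_i\Delta_{X_i}$ by definition of a coarse structure, and by the usual closure arguments every $V\in\cC_X$ is contained in $\Delta_X\cup\bigcup_{i\in F}(\iota_i\times\iota_i)(U_i)$ for some finite $F\subseteq I$ and $U_i\in\cC_{X_i}$.

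Compatibility then reduces to the corresponding property in each $X_i$: for $B\in\cB_X$ and a generator $V=(\iota_i\times\iota_i)(U_i)$ one computes $V[B]=\iota_i(U_i[\iota_i^{-1}(B)])$, which belongs to $\cB_X$, and this propagates to arbitrary $V\in\cC_X$ by standard closure arguments. The $\iota_i$ are morphisms by construction. For the universal property, given $(f_i\colon X_i\to T)_{i\in I}$ in $\gBC$, the unique set map $f\colon X\to T$ with $f\circ\iota_i=f_i$ is proper, since $\iota_j^{-1}(f^{-1}(B))=f_j^{-1}(B)\in\cB_{X_j}$ for every $B\in\cB_T$, and controlled, since $f\times f$ sends each generator $(\iota_i\times\iota_i)(U_i)$ into $(f_i\times f_i)(U_i)\in\cC_T$.

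The only point where passing from $\BC$ to $\gBC$ is essential -- and thus the main conceptual obstacle -- is the definition of $\cB_X$. In classical $\BC$ the singleton axiom would force every one-point set to be bounded; combined with properness of the induced $f$, this would require $f^{-1}(B)$ to meet only finitely many components for every bounded $B\subseteq T$, which need not hold for arbitrary $f_i$. Dropping the singleton axiom allows $\cB_X$ to contain sets touching infinitely many components, provided only that the componentwise intersections are bounded; this is exactly what makes the universal property go through, and it is the only step where the generalization is really used.
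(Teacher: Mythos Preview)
Your proof is correct and essentially identical to the paper's: same underlying set, same coarse structure (the paper writes $\cC\langle\bigcup_i\cC_{X_i}\rangle$, which is exactly your generating set), and same generalized bornology (the paper writes $B\cap X_i\in\cB_{X_i}$, which is your $\iota_i^{-1}(B)\in\cB_{X_i}$). The verification of compatibility and of the universal property also proceeds the same way.

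Your final paragraph, however, is mistaken. Coproducts already exist in classical $\BC$ via the very same construction: if each $X_i$ is locally bounded then so is $X$, since for $x\in X_j$ the singleton $\{x\}$ satisfies $\{x\}\cap X_i\in\cB_{X_i}$ for all $i$ (it equals $\{x\}$ for $i=j$ and $\emptyset$ otherwise). The passage to $\gBC$ is \emph{not} needed for coproducts; the genuine failures in $\BC$ are the final object and certain pushouts (cf.\ \thref{no.final} and \thref{exa.N}), not coproducts. Your claim that properness would force $f^{-1}(B)$ to meet only finitely many components conflates the bornology $\cB_X$ you just defined with something like the minimal bornology; the $\cB_X$ at hand explicitly allows bounded sets to touch infinitely many components, so nothing breaks in the classical setting here.
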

\begin{proof}
	Let $(X_i)_{i\in I}$ be a family of generalized bornological coarse spaces. We define an object $X$ in $\gBC$ as 
	\[X:=\coprod_{i\in I}X_i, \qquad\!\!\! \cC_X:=\cC\left\langle\bigcup\nolimits_{i\in I}\cC_{X_i}\right\rangle,\qquad\!\!\! 
	\cB_X:=\left\langle\menge{B\subseteq X}{\forall i\in I:\ B\cap X_i\in\cB_{X_i}}\right\rangle.\]
	To prove that the structures $\cB_X$ and $\cC_X$ are compatible, it is enough to show that for all generators $U$ in $\cC_X$ the $U$-thickening of a generator of $\cB_X$ is bounded, so 
	assume $U\in\cC_{X_j}$ for some $j$ in $I$ and take a generator $B$ of $\cB_X$. Then $B_j:=B\cap X_j$ is bounded in $X_j$, hence $U[B]\cap X_j=U[B_j]$ is also bounded. 
	Further for any $k$ in $I$ with $k\neq j$
	we clearly have $U[B]\cap X_k=\leer$, thus $U[B]$ is a generator of $\cB_X$ and in particular it is bounded.\\
	We claim that $X$ represents the coproduct $\coprod_{i\in I}X_i$ in \gBC: The obvious inclusions $X_i\hookrightarrow X$ are clearly morphisms, and for an object $T$ in
	$\gBC$ together with morphisms $f_i\colon X_i\to T$ we get a set-theoretical unique map $f\colon X\to T$, so it suffices to show that this $f$ is a morphism in \gBC. 
	Since for all $i$ in $I$ the maps $f_i$ are controlled, the map $f$ maps generators of $\cC_X$ to entourages, and hence is controlled.
Now consider $B$ in $\cB_T$. For any $i$ in $I$ we have $f^{-1}(B)\cap X_i=f_i^{-1}(B)$, which is bounded by properness of $f_i$, hence
	by definition $f^{-1}(B)$ is a generator of $\cB_X$ and in particular it is bounded.
\end{proof}

\begin{prop}\thlabel{coequ}
	The category \gBC~has all co-equalizers of pairs of morphisms.
\end{prop}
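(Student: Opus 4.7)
The plan is to follow the pattern of the previous constructions. Set-theoretically, let $Q := Y/{\sim}$ be the coequalizer, where ${\sim}$ is the smallest equivalence relation on $Y$ satisfying $f(x) \sim g(x)$ for every $x \in X$, and let $q\colon Y \to Q$ be the canonical quotient. Since the forgetful functor $U\colon \gBC \to \Set$ is a left adjoint by \thref{adjoint}, it preserves colimits, so $Q$ is the only possible underlying set. It then remains to equip $Q$ with a generalized bornological coarse structure realizing the universal property.

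For the coarse structure I take the smallest one making $q$ controlled,
\[
\cC_Q := \cC\left\langle\menge{(q\times q)(U)}{U\in\cC_Y}\right\rangle.
\]
For the bornology, the naive choice $\cB\langle\{q(B) : B \in \cB_Y\}\rangle$ typically fails to be compatible with $\cC_Q$, because the ${\sim}$-saturation $q^{-1}(q(B))$ of a bounded set $B$ need not be bounded, so that the thickening $(q\times q)(U)[q(B)] = q(U[q^{-1}(q(B))])$ escapes the generating family. I will therefore use the largest bornology which makes $q$ proper and is automatically stable under $\cC_Q$-thickenings,
\[
\cB_Q := \left\{B' \subseteq Q\ \middle|\ \forall V \in \cC_Q\colon q^{-1}(V[B']) \in \cB_Y\right\}.
\]
That this is a generalized bornology is immediate, and compatibility with $\cC_Q$ is built in by closure of $\cC_Q$ under composition. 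The instance $V = \Delta_Q$ forces $q$ to be proper, while controlledness holds by construction, so $q$ is a morphism in $\gBC$ with $q\circ f = q\circ g$.

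For the universal property, let $h\colon Y \to T$ be any morphism in $\gBC$ with $hf = hg$; there is a unique set-theoretical factorization $\tilde h\colon Q \to T$ with $\tilde h \circ q = h$. Controlledness of $\tilde h$ is immediate by checking on the generators of $\cC_Q$, via $(\tilde h\times\tilde h)((q\times q)(U)) = (h\times h)(U) \in \cC_T$. The main obstacle --- and the reason for the above definition of $\cB_Q$ --- is to show $\tilde h$ is proper, i.e.\ $q(h^{-1}(B_T')) \in \cB_Q$ for every $B_T' \in \cB_T$. The crucial observation is that $h$ is constant on ${\sim}$-classes, so $q^{-1}(q(A)) \subseteq h^{-1}(h(A))$ for every $A \subseteq Y$; applying this to $A = U'[h^{-1}(B_T')]$ and combining with compatibility in $T$ and properness of $h$ gives
\[
q^{-1}\bigl((q\times q)(U')[q(h^{-1}(B_T'))]\bigr) \subseteq h^{-1}\bigl((h\times h)(U')[B_T']\bigr) \in \cB_Y
\]
for every generator $U' \in \cC_Y$, which completes the argument.
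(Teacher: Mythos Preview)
Your construction and overall approach coincide with the paper's: same underlying set, same coarse structure $\cC_Q$, and the same ``largest compatible'' bornology $\cB_Q$. The properness check for $\tilde h$, however, has a gap. You verify
\[
q^{-1}\bigl((q\times q)(U')[\tilde h^{-1}(B_T')]\bigr)\in\cB_Y
\]
only for \emph{generators} $(q\times q)(U')$ of $\cC_Q$, but membership in $\cB_Q$ as you defined it demands this for every $V\in\cC_Q$. Since $q$ is in general not injective, a composition $(q\times q)(U_1')\circ(q\times q)(U_2')$ is typically strictly larger than $(q\times q)(U_1'\circ U_2')$, so the generating family is not closed under composition and checking generators does not automatically suffice.

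The fix is immediate and is exactly what the paper does: take an arbitrary $V\in\cC_Q$ and use the controlledness of $\tilde h$ (already established) rather than that of $h$. For $y\in q^{-1}\bigl(V[\tilde h^{-1}(B_T')]\bigr)$ there is $z\in\tilde h^{-1}(B_T')$ with $(q(y),z)\in V$, hence $h(y)=\tilde h(q(y))\in(\tilde h\times\tilde h)(V)[B_T']$, giving
\[
q^{-1}\bigl(V[\tilde h^{-1}(B_T')]\bigr)\subseteq h^{-1}\bigl((\tilde h\times\tilde h)(V)[B_T']\bigr)\in\cB_Y,
\]
since $(\tilde h\times\tilde h)(V)\in\cC_T$ and $h$ is proper.
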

\begin{proof}
	Let $f,g\colon X\to Y$ be two morphisms in \gBC. Define $E:=\operatorname{CoEq}(f,g)$ to be the set-theoretical co-equalizer of the underlying maps $f$ and $g$ and denote by
	$\pi$ the canonical map $Y\to E$. Define a coarse and a generalized bornological structure on $E$ as 
	\[\cC_E:=\cC\left\langle (\pi\times\pi)(\cC_Y)\right\rangle,\qquad \cB_E:=\cB\left\langle\menge{B\subseteq E}{\forall U\in\cC_E:\ \pi^{-1}\big(U[B]\big)\in\cB_Y}\right\rangle.\]
	By construction $\cC_E$ and $\cB_E$ are compatible and further the canonical map $\pi\colon Y\to E$ is controlled and proper. The latter can be seen as follows: 
	For a generator $B$ of $\cB_E$ 
	take $U:=\Delta_E$, then the pre-image of
	$U[B]=B$ is bounded by definition. The object $E$ together with the morphism $\pi$ represents the co-equalizer of $f$ and $g$ in \gBC:\\
	Consider an object $T$  in
	$\gBC$ together with a morphism $p\colon X'\to T$ such that $p\circ f=p\circ g$. Then set-theoretically this map factors through $E$: 
	\[\xymatrix{X\ar@<1mm>[r]^f\ar@<-1mm>[r]_g & Y\ar[r]^\pi\ar[dr]_p & E\ar@{.>}[d]^h\\ && T}\]
	By assumption the map $p$ is controlled, hence by commutativity of the diagram we immediately get that $h$ maps generators of $\cC_E$ to controlled sets. Therefore the map $h$ is
	controlled. To check that it is also proper, consider a bounded subset
	$B$ of $T$. We show, that $h^{-1}(B)$ is a generator of $\cB_E$. For this let $U$ be in $\cC_E$. One immediatelly verifies, that
	\[\pi^{-1}\big(U[h^{-1}(B)]\big)\subseteq p^{-1}\big((h\times h)(U)[B]\big).\]
	Therefore it remains to show, that the supset is bounded in $Y$:
	Since $h$ is controlled, $V:=(h\times h)(U)$ is an entourage of $T$, hence the $V$-thickening of $B$ is a bounded subset of $T$, so its pre-image under $p$ is bounded in $Y$
	which concludes the proof.
\end{proof}

\begin{cor}\thlabel{tmp3}
	The category \GgBC~has all coproducts and all co-equalizers of pairs of morphisms.
\end{cor}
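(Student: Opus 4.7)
The plan is to take the explicit constructions of the coproduct in \thref{coprod} and of the co-equalizer in \thref{coequ} verbatim, and to check in each case that the resulting generalized bornological coarse space admits a canonical $\Gamma$-action such that the $\Gamma$-invariant entourages are cofinal. Once this is verified, the universal properties transfer automatically from \gBC~to \GgBC, since the comparison map out of the (co-)limit is unique already on the level of sets, and equivariance of this map follows from equivariance of the defining morphisms in the diagram by a standard cofinality argument.

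For the coproduct, let $(X_i)_{i\in I}$ be a family in $\GgBC$ and let $X=\coprod_i X_i$ be equipped with the structures from \thref{coprod}. The $\Gamma$-actions on the $X_i$ assemble into a $\Gamma$-action on $X$, which is compatible with the inclusions $X_i\hookrightarrow X$. Since $\cC_X$ is generated by $\bigcup_i\cC_{X_i}$ and each $\cC_{X_i}^\Gamma$ is cofinal in $\cC_{X_i}$, every generator of $\cC_X$ is contained in a $\Gamma$-invariant subset of $X\times X$ coming from some $\cC_{X_j}^\Gamma$; together with the $\Gamma$-invariant diagonal this shows that $\cC_X^\Gamma$ is cofinal in $\cC_X$. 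The universal map $f\colon X\to T$ into an object of $\GgBC$ produced in \thref{coprod} is equivariant because it is so on each summand.

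For the co-equalizer of $f,g\colon X\to Y$ in $\GgBC$, the set-theoretic coequalizer $E=\operatorname{CoEq}(f,g)$ inherits a unique $\Gamma$-action from $Y$ making the canonical projection $\pi\colon Y\to E$ equivariant: explicitly, the equivalence relation on $Y$ generated by $f(x)\sim g(x)$ is $\Gamma$-stable since $f$ and $g$ are $\Gamma$-equivariant. Equipping $E$ with the structures from \thref{coequ}, we check cofinality of $\cC_E^\Gamma$ in $\cC_E$: any generator has the form $(\pi\times\pi)(U)$ for some $U\in\cC_Y$, and by cofinality in $Y$ we may replace $U$ by a $\Gamma$-invariant entourage $U'\supseteq U$; then $(\pi\times\pi)(U')$ is $\Gamma$-invariant by equivariance of $\pi$, and contains $(\pi\times\pi)(U)$. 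The induced map $h\colon E\to T$ from \thref{coequ} is equivariant because $p$ is and $\pi$ is surjective.

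The only mildly subtle point, and hence the part to watch, is the cofinality verification for the co-equalizer: one must not assume the entourages themselves are $\Gamma$-invariant, only that they are dominated by $\Gamma$-invariant ones, and then use that taking direct images under an equivariant map preserves $\Gamma$-invariance. Apart from this, everything reduces to reading off the extra data from the proofs of \thref{coprod} and \thref{coequ}.
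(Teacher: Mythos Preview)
Your proposal is correct and follows precisely the approach the paper intends: the paper's own proof is a one-line pointer stating that the proofs of \thref{coprod} and \thref{coequ} generalize to the equivariant setting ``like in the dual cases for products and equalizers,'' and what you have written is exactly that generalization spelled out, namely checking the inherited $\Gamma$-action and the cofinality of $\Gamma$-invariant entourages via the generators. The only thing you leave implicit (as does the paper) is that the $\Gamma$-action on the colimit is by automorphisms, i.e.\ that the bornology is $\Gamma$-invariant; this is routine from the explicit formulas, so no gap.
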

\begin{proof}
	The proofs of \thref{coprod,coequ} generalize like in the dual cases for products and equalizers.
\end{proof}

\begin{thm}
	The categories \gBC~and \GgBC~are both complete and co-complete.
\end{thm}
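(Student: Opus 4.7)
The plan is to invoke the standard category-theoretic fact that a category has all small limits as soon as it has all (small) products and all equalizers of parallel pairs, and dually has all small colimits as soon as it has all coproducts and all coequalizers. Both characterizations are classical (for instance, Mac Lane, \emph{Categories for the Working Mathematician}, Theorem V.2.1 and its dual).

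Thus I would simply assemble the pieces that have already been established in this section. For completeness of $\gBC$, I would cite \thref{prod} for the existence of arbitrary products and \thref{tmp1} for equalizers of pairs of morphisms, and conclude via the standard construction that represents the limit of a diagram $D\colon I\to\gBC$ as the equalizer of the two natural maps $\prod_{i\in I}D(i)\rightrightarrows\prod_{(\alpha\colon i\to j)\in\operatorname{Mor}(I)}D(j)$. For co-completeness, I would argue dually, using \thref{coprod} and \thref{coequ}, writing $\operatorname{colim}D$ as the coequalizer of the two maps $\coprod_{(\alpha\colon i\to j)}D(i)\rightrightarrows\coprod_{i\in I}D(i)$.

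The $\Gamma$-equivariant case then follows verbatim from the same formal argument by citing \thref{gprod}, \thref{tmp2}, and \thref{tmp3} in place of their non-equivariant counterparts.

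There is no real obstacle here since all the category-theoretic content has been discharged in the preceding propositions; the only thing to note is that the statement of the theorem is nothing more than a clean packaging of those results via a well-known criterion.
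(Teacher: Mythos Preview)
Your proposal is correct and matches the paper's own proof essentially verbatim: the paper likewise invokes \cite[Theorem V.2.1]{MacLane} (and its dual) and then just cites \thref{prod,tmp1,coprod,coequ} for $\gBC$ and \thref{gprod,tmp2,tmp3} for the equivariant case. There is nothing to add.
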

\begin{proof}
	By \cite[Theorem V.2.1]{MacLane} a category having all products and equalizers of pairs of morphisms, is complete. The dual statement shows the co-completeness of a category having all
	coproducts and co-equalizers. Hence we are done by the above propositions and corollaries (\thref{prod,tmp1,coprod,coequ,tmp2,gprod,tmp3}).
\end{proof}

To make this more explicit we give concrete formulas for the limit and the colimit for arbitrary diagrams in \GgBC:

\begin{rmk}\thlabel{gen.colim}
	Let $\bI$ be a small category and \[D\colon\bI\to\GgBC,\qquad D'\colon\bI\to\GgBC\] be diagrams.
	We give explicit formulas for the (co-)limit of $D$, $D'$ respectively. Let $i\colon\GgBC\to\GSet$ be the forgetful functor and denote by $X:=\lim_{\bI} (i\circ D)$ the limit of
	$i\circ D$ in \GSet. We get for all $j$ in $\bI$ the canonical morphism $f_j\colon X\to i(D(j))$. We can consider the limit $\lim_{\bI} D$ as a subset of the product. 
	It is given by
	the set $X$ equipped with \vspace{0.1cm}
	\begin{align*}
		\cC_X&:=\cC\left\langle\menge{(X\times X)\cap\prod\nolimits_{j\in\bI}U_j}{\forall j\in\bI:\ U_j\in\cC_{D(j)}}\right\rangle\\[0.25cm]
		\cB_X&:=\cB\left\langle\menge{f_j^{-1}(B)}{j\in\bI,\ B\in\cB_{D(j)}}\right\rangle.
	\end{align*}\mbox{}\\[-0.25cm]
	Further denote $Y:=\colim_{\bI}(i\circ D')$ the colimit in \GSet. For all $j$ in $\bI$ we have the canonical 
	morphism $g_j\colon i(D'(j))\to Y$. Now the colimit $\colim_{\bI}D'$ in \GgBC~is given by
	$Y$ equipped with \vspace{1mm}
	\begin{align*}
		\cC_Y&:=\cC\left\langle\menge{(g_j\times g_j)(U)}{j\in\bI,\ U\in\cC_{D'(j)}}\right\rangle\\[0.25cm]
		\cB_Y&:=\cB\left\langle\menge{B\subseteq Y}{\forall U\in\cC_Y,\ \forall j\in\bI:\ g_j^{-1}\big(U[B]\big)\in\cB_{D'(j)}}\right\rangle.
	\end{align*}
\end{rmk}

Like in the category \GBC, the tensor product is part of a symmetric monoidal structure on \GgBC~with tensor unit the locally bounded 
one-point-space (i.e. a single point with maximal bornology, 
cf. \thref{tensor-symm}), 
but now we have an additional symmetric monoidal  structure:

\begin{lem}
	The category  \GgBC~is  symmetric monoidal with respect to the (Cartesian) product. The unit is the space consisting of a single unbounded point (i.e. the final object).
\end{lem}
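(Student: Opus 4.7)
The plan is to invoke the general fact that any category with finite products admits a canonical cartesian symmetric monoidal structure whose tensor unit is the terminal object. Hence there is essentially no content to prove beyond identifying the terminal object and noting that the Cartesian product formula on underlying sets described in the statement really is the categorical product in $\GgBC$.

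\textbf{Step 1: Existence of products and terminal object.} By \thref{gprod} the category $\GgBC$ has all small products, and in particular all finite ones; by \thref{gen.colim} (or directly from \thref{prod}) the underlying set of the product is the Cartesian product, so the monoidal bifunctor on underlying sets is the Cartesian product as claimed. For the unit I would take $\ast := \ast_{\max,\leer}$, the one-point set equipped with the (uniquely determined) maximal coarse structure and the trivial bornology $\cB_\leer = \{\leer\}$, with the trivial $\Gamma$-action. I would quickly check that this is terminal: for any $X$ in $\GgBC$ the unique set map $X \to \ast$ is $\Gamma$-equivariant, controlled (every subset of $\ast \times \ast$ lies in $\cC_{\max}$), and proper (the only bounded subset of $\ast$ is $\leer$, whose preimage is $\leer \in \cB_X$). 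This is exactly the one-point-space consisting of a single unbounded point.

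\textbf{Step 2: Categorical wrap-up.} With Step 1 in hand, I would invoke the standard result that for any category $\cC$ with finite products, the Cartesian product determines a symmetric monoidal structure with tensor unit the terminal object; the associator, unitors, and braiding are the unique isomorphisms determined by the universal property of products, and the coherence axioms (pentagon and hexagon) are automatic because all the relevant diagrams are between maps into a product, and hence are forced to commute by the universal property. Since $\GgBC$ has finite products by Step 1, this applies verbatim.

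The main obstacle is really only verifying that the candidate unit $\ast_{\max,\leer}$ is terminal in $\GgBC$ — and even that is immediate once one notes that properness into $\ast_{\max,\leer}$ is vacuous because $\cB_{\ast} = \{\leer\}$. Everything else is formal category theory, so no genuine computation is required.
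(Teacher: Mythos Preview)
Your proposal is correct; the paper states this lemma without proof, treating it as immediate from the existence of finite products (\thref{gprod}) and the identification of the terminal object, which is exactly the standard argument you spell out. There is nothing to add.
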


\clearpage
\section{Connections between \texorpdfstring{\GBC}{GBC}~and \texorpdfstring{\GgBC}{GgBC}}
	In this section we study the connection of the classical category \GBC~and our new constructed \GgBC. It is easily seen that we can view \GBC~as full subcategory of \GgBC, but we have no adjunction
	between these two categories. Nevertheless we have strong connections of limits and colimits in both categories. 
	Further, there is a notion of coarse connectedness and coarse components. It will turn out that these components consist of only either bounded or unbounded points. 
	Finally there will be a criterion for when a diagram in the
	classical \GBC~admits a colimit.
	
\begin{prop}\thlabel{no.adj}
	The inclusion functor $i\colon\GBC\to\GgBC$ is a fully faithful embedding, but it has neither a left nor a right adjoint.
\end{prop}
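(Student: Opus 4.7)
My plan is to verify full faithfulness directly and then obstruct both adjoints by exploiting a single witness: the final object $T:=\ast_{\max,\leer}$ of $\GgBC$ from the end of Section 5, which has an unbounded point and therefore lies outside the essential image of $i$.

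Full faithfulness is a definition chase: morphisms in $\GBC$ and in $\GgBC$ are characterized by the same three conditions (proper, controlled, $\Gamma$-equivariant), so the map $\Hom_{\GBC}(X,Y)\to\Hom_{\GgBC}(iX,iY)$ is literally the identity on the same subset of $\Hom_{\Set}(X,Y)$ for all $X,Y\in\GBC$.

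For the non-existence of a right adjoint $R$, I would plug $T$ into the hypothetical adjunction to obtain
\[\Hom_{\GBC}(X,RT)\cong\Hom_{\GgBC}(iX,T)=\{*\}\]
for every $X\in\GBC$, which would make $RT$ a final object of $\GBC$. To derive a contradiction I adapt the proof of \thref{no.final}: the forgetful functor $\GBC\to\Set$ is still a right adjoint (to $(-)_{\min,\max}$ equipped with the trivial $\Gamma$-action), so $RT$ would have underlying singleton, and then the unique map $\N_{\min,\min}\to RT$ (with trivial $\Gamma$-action) would fail to be proper, as $\N\notin\cB_\min$.

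For the non-existence of a left adjoint $L$, the key step is to show $\Hom_{\GgBC}(T,iX)=\leer$ for every $X\in\GBC$: such a map would send the unique unbounded point of $T$ to some $x_0\in X$, but then $\{x_0\}\in\cB_X$ -- here crucially using that $X$ is classically bornological, so singletons are bounded -- would force $\{*\}\in\cB_T=\{\leer\}$, contradicting the defining property of $T$. The adjunction would then give $\Hom_{\GBC}(LT,X)=\leer$ for all $X$, contradicting the existence of $\mathrm{id}_{LT}$. I do not anticipate any serious obstacle; the only bookkeeping is verifying that the argument of \thref{no.final} transfers verbatim from $\BC$ to $\GBC$, and that the same witness $T$ does the work in both non-existence arguments because $T$ is precisely the object whose existence distinguishes $\GgBC$ from $\GBC$.
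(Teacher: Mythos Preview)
Your argument is correct, and for full faithfulness and the non-existence of a right adjoint it coincides with the paper's proof (the paper phrases the latter as ``a right adjoint $R$ would preserve the final object'', which is your computation $\Hom_{\GBC}(X,RT)\cong\{*\}$ repackaged). One small slip: the forgetful functor $\GBC\to\Set$ is indeed a right adjoint, but its left adjoint sends $S$ to the free $\Gamma$-set $(\Gamma\times S)_{\min,\max}$, not to $S_{\min,\max}$ with trivial action (the latter fails when $\Gamma$ acts nontrivially on the target). This does not affect your conclusion, since all you need is that the forgetful functor preserves terminal objects; alternatively, use $\GBC\to\GSet$ with left adjoint $(-)_{\min,\max}$.

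For the non-existence of a left adjoint your route is genuinely different from the paper's. The paper argues that a left adjoint $L$ would preserve the pushout square of \thref{ex.PO}, and since $L\circ i\simeq\mathrm{id}$ by full faithfulness, this would produce in $\GBC$ the pushout that \thref{exa.N} shows cannot exist. Your argument is more elementary and more uniform: the same witness $T=\ast_{\max,\leer}$ obstructs both adjoints, and you avoid invoking the pushout example entirely by observing directly (essentially \thref{map.loc.small}) that $T$ admits no morphism into any locally bounded space. The paper's approach has the narrative virtue of reusing the motivating \thref{exa.N}; yours has the virtue of isolating the single categorical feature---the unbounded point---that separates $\GgBC$ from $\GBC$.
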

\begin{proof}
	The first statement is clear from the definitions. Next, if $i$ had a right adjoint $R$, then $R$ preserves the final object. That contradicts 
	\thref{no.final}. On the other hand, assume $i$ has a left
	adjoint $L$. Then $L$ preserves colimits and in particular applied to the square in \thref{ex.PO} we get a pushout square
	\[
		\xymatrix{L(\N_{\min,\max})\ar[r]\ar[d]\ar@{}[dr]|(0.7){\text{\pigpenfont{I}}} & L(\N_{\max,\max}) \ar[d] \\ L(\N_{\min,\min})\ar[r] & L(\N_{\max,\emptyset}) }
	\]
	in \GBC. 
	However, since $i$ is fully faithful, the co-unit of the adjunction would give an natural isomorphism $L(i(X))\overset\cong\longrightarrow X$ for any $X$ in $\GBC$.
	Hence applying $i$ to the pushout square above gives the following pushout square in $\GBC$:
	\[
		\xymatrix{\N_{\min,\max}\ar[r]\ar[d]\ar@{}[dr]|(0.65){\text{\pigpenfont{I}}} & \N_{\max,\max} \ar[d] \\ \N_{\min,\min}\ar[r] & L(\N_{\max,\emptyset})}
	\]
	That is a contradiction to \thref{exa.N}.
\end{proof}

Consider a morphism $f\colon X\to Y$ of generalized bornological coarse spaces. The properness of $f$ provides a very usefull lemma:

\begin{lem}\thlabel{map.loc.small} \mbox{}\begin{enumerate}
		\item For any \emph{bounded} point $y$ in $Y$, the fiber $f^{-1}(y)$ contains only bounded points of $X$. \ In particular, if $Y$ is locally bounded, then so is $X$.
		\item If $X$ contains only unbounded points, then so does the image $f(X)$.
	\end{enumerate}
\end{lem}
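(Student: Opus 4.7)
The plan is to read off both statements directly from the definition of properness together with the fact that the generalized bornology $\cB_X$ is closed under taking subsets.

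For part (1), I would start with a bounded point $y\in Y_b$, so that $\{y\}\in\cB_Y$. Properness of $f$ gives $f^{-1}(\{y\})\in\cB_X$. Any $x\in f^{-1}(y)$ then satisfies $\{x\}\subseteq f^{-1}(\{y\})$, and since $\cB_X$ is downward closed, $\{x\}\in\cB_X$, i.e.\ $x\in X_b$. For the consequence, if $Y=Y_b$ then every $x\in X$ has $f(x)\in Y_b$, so by what we just proved $x\in X_b$; hence $X=X_b$.

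Part (2) is the contrapositive of part (1) applied pointwise: if $f(X)$ contained a bounded point $y=f(x)$, then $x$ would lie in $f^{-1}(y)$ and thus be bounded by (1), contradicting $X=X_h$. So $f(X)\subseteq Y_h$.

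There is no real obstacle here; the argument is immediate from the definitions. The only thing to watch is the very mild subtlety that in the generalized setting we cannot appeal to the classical fact that singletons are automatically bounded, which is precisely why the statement is nontrivial enough to record: the closure of $\cB_X$ under subsets is doing all the work.
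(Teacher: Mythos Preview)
Your argument is correct and is exactly the unpacking of the paper's one-line proof, which just says that both assertions follow immediately from the properness of $f$; you have simply made explicit the use of downward closure of $\cB_X$ and the contrapositive for part~(2). There is nothing to add or change.
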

\begin{proof}
	Both assertions follow immediately from the properness of $f$.
\end{proof}

\begin{defi}
	We define a relation $\sim_c$ on $X$ by: \ $x\sim_cy$ iff there exists an entourage $U$ in $\cC_X$ such that $(x,y)\in U$.
\end{defi}

\begin{lem}
	The relation $\sim_c$ on $X$ is an equivalence relation. 
\end{lem}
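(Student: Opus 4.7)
The plan is to verify reflexivity, symmetry, and transitivity directly, each corresponding to one of the three defining closure properties of a coarse structure: containing the diagonal, closure under inverses, and closure under compositions.

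For reflexivity, I would simply note that by definition every coarse structure contains the diagonal $\Delta_X$, and $(x,x)\in\Delta_X$ witnesses $x\sim_c x$ for every $x$ in $X$.

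For symmetry, given $x\sim_c y$ with witnessing entourage $U\in\cC_X$ containing $(x,y)$, I would invoke closure of $\cC_X$ under taking inverses: the entourage $U^{-1}\in\cC_X$ satisfies $(y,x)\in U^{-1}$, so $y\sim_c x$. For transitivity, given $U,V\in\cC_X$ with $(x,y)\in U$ and $(y,z)\in V$, closure of $\cC_X$ under compositions yields $U\circ V\in\cC_X$ and by construction $(x,z)\in U\circ V$, hence $x\sim_c z$.

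There is no real obstacle here — each of the three conditions is an immediate unwinding of a single axiom in the definition of a coarse structure, and the argument is independent of the bornology and thus identical in \BC~and \gBC. The step most worth writing explicitly is transitivity, since it is the only one that genuinely uses the composition $\circ$ on $\cP(X\times X)$ introduced at the beginning of Section~2.
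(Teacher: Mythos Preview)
Your proof is correct and matches the paper's own argument essentially verbatim: reflexivity via $\Delta_X\in\cC_X$, symmetry via $U^{-1}\in\cC_X$, and transitivity via $U\circ V\in\cC_X$. There is nothing to add or change.
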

\begin{proof}
	By definition we have $\Delta_X\in\cC_X$, hence $x\sim_cx$ for all $x$. For symmetry assume $x\sim_cy$ is witnessed by some $U$ in $\cC_X$. Then also $U^{-1}\in\cC_X$, which gives
	$y\sim_cx$. Finally if $U,V$ in $\cC_X$ witness $x\sim_cy$ and $y\sim_cz$ respectively, then we have $U\circ V\in\cC_X$, which shows $x\sim_cz$.
\end{proof}

\begin{defi}
	The equivalence relation $\sim_c$ is called the \emph{coarse equivalence}. The equivalence classes are called the \emph{coarse components} of $X$. We denote the set of all coarse components
	of $X$ by $\pi_0^\coarse(X)$ and we call $X$ \emph{coarsely connected}, if $\pi_0^\coarse$ is a singleton.
\end{defi}

\begin{lem}\thlabel{thick.coarse}
	A coarse component of $X$ is either locally bounded or consists of unbounded points only.
\end{lem}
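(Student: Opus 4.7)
The plan is to show the dichotomy by a contrapositive-style argument: assume a component $C$ contains at least one bounded point, and deduce that \emph{every} point of $C$ is bounded, i.e.\ $C$ is locally bounded. If no bounded point exists in $C$, there is nothing to prove.

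So I would fix a coarse component $C$ and suppose there exists $x_0\in C$ with $\{x_0\}\in\cB_X$. For any other $y\in C$, the equivalence $y\sim_c x_0$ supplies an entourage $U\in\cC_X$ with $(y,x_0)\in U$. Hence $y\in U[\{x_0\}]$, and by compatibility of $\cB_X$ with $\cC_X$ the thickening $U[\{x_0\}]$ lies in $\cB_X$. Since a generalized bornology is closed under subsets (in particular under the subset $\{y\}\subseteq U[\{x_0\}]$), we conclude $\{y\}\in\cB_X$, so $y\in X_b$. Because $y\in C$ was arbitrary, $C\subseteq X_b$.

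The only conceptual input is the compatibility axiom between $\cB_X$ and $\cC_X$; everything else is bookkeeping about equivalence classes. There is no real obstacle: the statement is essentially a one-line consequence of the fact that a bounded set together with an entourage produces a bounded thickening, combined with the observation that being $\sim_c$-related to a point $x_0$ means precisely lying in such a thickening of $\{x_0\}$. Consequently the two alternatives ``$C\cap X_b\neq\emptyset$'' and ``$C\subseteq X_h$'' are mutually exclusive and exhaustive, yielding exactly the claimed dichotomy.
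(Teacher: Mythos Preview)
Your argument is correct and essentially identical to the paper's: both use that if $y$ is a bounded point and $x\sim_c y$ via an entourage $U$, then $x\in U[\{y\}]\in\cB_X$, so $x$ is bounded as well. The only cosmetic difference is that the paper phrases it as a proof by contradiction (assume a component contains both a bounded and an unbounded point), whereas you argue directly that one bounded point forces all points of the component to be bounded.
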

\begin{proof}
	Assume $M\subseteq X$ is a coarse component containing an unbounded point $x$ and a bounded point $y$ (i.e. $\{y\}$ is a bounded subset of $X$). By assumption there exists 
	some entourage $U$ such that $(x,y)\in U$, hence $x\in U[\{y\}]$, which is bounded, a contradiction.
\end{proof}

\begin{cor}\thlabel{iso.union}
	We get a canonical isomorphism $X\cong X_b\amalg X_h$ in \GgBC, where the subsets $X_b$ and $X_h$ are equipped with the subspace 
	structure (cf. \thref{subspace.str}).
\end{cor}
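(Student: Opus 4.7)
The plan is to exhibit the canonical morphism $\varphi\colon X_b\amalg X_h\to X$ in $\GgBC$, arising via the universal property of the coproduct from the subspace inclusions $\iota_b\colon X_b\hookrightarrow X$ and $\iota_h\colon X_h\hookrightarrow X$, as an isomorphism. Since $\varphi$ is the identity on underlying sets, the task reduces to showing that the bornology and the coarse structure of $X$ agree with those of the coproduct as described in \thref{coprod} (and \thref{gen.colim}). To obtain $\varphi$ in the first place, I would note that $\Gamma$ acts by automorphisms, which in particular are proper, so $\Gamma$ preserves the set of bounded points $X_b$ (and hence $X_h$). By \thref{gsubstr} the subspace structures are therefore $\Gamma$-equivariant, and $\iota_b$ and $\iota_h$ are morphisms in $\GgBC$.

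For the bornology, the decisive observation is that every $B\in\cB_X$ consists of bounded points and hence $B\subseteq X_b$. Consequently the subspace bornology on $X_b$ coincides with $\cB_X$, while $\cB_{X_h}=\{\emptyset\}$ is the trivial bornology. Inserting these into the description of the coproduct bornology from \thref{coprod} returns precisely $\cB_X$.

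For the coarse structure, the key input is \thref{thick.coarse}: any $U\in\cC_X$ witnesses $x\sim_c y$ for each $(x,y)\in U$, so $x$ and $y$ lie in a common coarse component, which by \thref{thick.coarse} is entirely contained in $X_b$ or in $X_h$. Thus $U$ decomposes as $U=(U\cap X_b^2)\cup(U\cap X_h^2)$, where each summand is a generator of the corresponding pullback coarse structure on $X_b$, respectively $X_h$; consequently $U$ lies in the coproduct coarse structure generated by $\cC_{X_b}\cup\cC_{X_h}$. The reverse inclusion is automatic since the generators of $\cC_{X_b}$ and $\cC_{X_h}$ are, by definition of the pullback, traces of entourages of $X$. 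I do not anticipate a substantial obstacle; the essential content lies in \thref{thick.coarse}, which rules out entourages crossing the $X_b$/$X_h$ partition, after which the remaining verification is just unwinding the coproduct formulas.
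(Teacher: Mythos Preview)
Your proposal is correct and follows essentially the same route as the paper: both arguments produce the canonical map $X_b\amalg X_h\to X$ from the subspace inclusions, and both identify the key ingredients as (i) every bounded set lies in $X_b$ (for the bornology) and (ii) \thref{thick.coarse} forbids entourages from crossing the partition (for the coarse structure). The only cosmetic difference is that the paper phrases the second half as verifying that the set-theoretic inverse $X\to X_b\amalg X_h$ is proper and controlled, whereas you compare the two structures directly; these are equivalent formulations of the same check, and your explicit invocation of \thref{thick.coarse} makes transparent what the paper leaves implicit in the phrase ``by definition of the coarse structure on the coproduct.''
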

\begin{proof}
	The inclusions $X_b\hookrightarrow X$ and $X_h\hookrightarrow X$ are morphisms by construction and they induce the morphism $X_b\amalg X_h\to X$, whose underlying map
	of sets is the identity. \ Further  $i\colon X\to X_b\amalg X_h$ is controlled by definition of the coarse structure on the coproduct. Finally, since every non-empty 
	bounded subset in $X_b\amalg X_h$ is completely contained in $X_b$, the map $i$ is proper (by definition of the subspace structure on $X_b$).
\end{proof}

In the following $\iota$ denotes the fully faithful inclusion functor
\[
	\iota\colon\GBC\longrightarrow\GgBC.
\]
Even though $\iota$ has neither left nor right adjoint (cf. \thref{no.adj}), it preserves all limits and colimits:

\begin{prop}\thlabel{pres.lim}
	Let $\bI$ be a small category and $D\colon \bI\to\GBC$ be a diagram, whose limit exists in \GBC. 
	Then the canonical map \[\iota(\lim\nolimits_{\bI} D)\longrightarrow\lim\nolimits_{\bI} \iota(D)\] is an isomorphism. Similarly if $D'\colon\bI\to\GBC$ is a diagram
	whose colimit exists in \GBC, then the canonical map \[\colim\nolimits_{\bI} \iota(D')\longrightarrow \iota(\colim\nolimits_{\bI} D')\] is an isomorphism.
\end{prop}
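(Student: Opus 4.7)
The strategy for both assertions rests on a single observation: by \thref{map.loc.small}(1), any morphism in $\GgBC$ whose target is locally bounded must have a locally bounded source. This lets me push $\GgBC$-(co)limits of diagrams originally in $\GBC$ back into the image of $\iota$.

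For the limit, I would set $L := \lim_{\bI} D$ (existing in $\GBC$ by hypothesis) and $L' := \lim_{\bI} \iota D$ in $\GgBC$ (existing by the preceding completeness theorem). The universal property of $L'$ applied to the cone $\iota L \to \iota D(j)$ induces the canonical comparison map $\iota L \to L'$. For non-empty $\bI$, pick any $j \in \bI$; the projection $L' \to \iota D(j)$ in $\GgBC$ has locally bounded target, so \thref{map.loc.small}(1) forces $L'$ itself to be locally bounded, and hence $L' \in \GBC$ via $\iota$. The case $\bI = \emptyset$ is excluded by the hypothesis, since otherwise $L$ would be a final object in $\GBC$, contradicting \thref{no.final}. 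To conclude I would verify that $L'$ represents the limit in $\GBC$: for any $T \in \GBC$, full faithfulness of $\iota$ (\thref{no.adj}) applied twice, together with the universal property of $L'$ in $\GgBC$, yields
\[
\Hom_\GBC(T, L') \cong \Hom_\GgBC(\iota T, L') \cong \lim\nolimits_j \Hom_\GgBC(\iota T, \iota D(j)) \cong \lim\nolimits_j \Hom_\GBC(T, D(j)).
\]
By uniqueness of limits, $L' \cong L$ in $\GBC$ and the canonical comparison map is exactly this isomorphism.

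The colimit case is entirely dual. Set $C := \colim_\bI D'$ in $\GBC$ and $C' := \colim_\bI \iota D'$ in $\GgBC$. The universal property of $C'$ applied to the cocone $\iota \alpha_j \colon \iota D'(j) \to \iota C$ induces the canonical comparison map $\varphi \colon C' \to \iota C$ in $\GgBC$. Since $\iota C$ is locally bounded, \thref{map.loc.small}(1) applied to $\varphi$ forces $C'$ to be locally bounded as well, i.e. $C' \in \GBC$. The analogous computation
\[
\Hom_\GBC(C', T) \cong \Hom_\GgBC(C', \iota T) \cong \lim\nolimits_j \Hom_\GgBC(\iota D'(j), \iota T) \cong \lim\nolimits_j \Hom_\GBC(D'(j), T)
\]
for $T \in \GBC$ shows that $C'$ represents the colimit of $D'$ in $\GBC$, hence $C' \cong C$ and $\varphi$ is the asserted isomorphism.

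The main obstacle is conceptual rather than computational: recognizing that \thref{map.loc.small}(1) is the exact tool to transport local boundedness through the comparison maps and thereby lift a $\GgBC$-(co)limit back into $\GBC$. Once this is spotted, everything reduces to a formal manipulation of representable functors, powered entirely by the full faithfulness of $\iota$ and the bi-completeness of $\GgBC$ established in the previous section; no delicate bookkeeping of the bornological or coarse structure on the (co)limit is required.
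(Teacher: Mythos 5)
Your proof is correct and follows the same strategy as the paper: in both directions the crucial input is \thref{map.loc.small}(1), used to argue that local boundedness propagates along the comparison map and thereby that the $\GgBC$-(co)limit is already an object of the full subcategory $\GBC$, with the remaining identification being a formal consequence of full faithfulness. The paper phrases the limit half slightly differently (verifying the universal property of $\iota L$ directly by noting that any test cone in $\GgBC$ must have locally bounded source, rather than first forming $L'$ and then comparing), but this is a cosmetic difference, and your explicit handling of the $\bI=\emptyset$ case matches the paper's footnote.
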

\begin{proof}
	Let $L:=\lim_{\bI} D$ in \GBC. Then $\iota(L)$ fulfills the universal property of the limit of the diagram $\iota\circ D$: In fact for any object 
	$T$ in $\GgBC$ together with morphisms $f_i\colon T\to D(i)$ each\footnote{note that the limit is not taken over the empty diagram, because this does not have a limit in \GBC~as shown
		in \thref{no.final}}
	of the morphisms $f_i$ imply that $T$ is locally bounded by \thref{map.loc.small}. Therefore we can view the limit of $\iota\circ D$ as an object in \GBC~(which embedds fully faithful into \GgBC), hence we are done.\\
	Now let $L':=\colim_{\bI} D'$ be the colimit in \GBC. On the other hand, since \GgBC~is 
	co-complete, there exists a colimit $C:=\colim_{\bI} \iota\circ D'$. By the universal property of $C$, there exists a unique morphism $C\to \iota(L)$.
	But $\iota(L)$ is locally bounded, hence by \thref{map.loc.small} also $C$ is locally bounded, and therefore can be viewed as object in \GBC. By the universal propery of $L$ in 
	\GBC~we get $L\cong C$ and we are done.
\end{proof}

\begin{lem}\thlabel{lim.recog}
	Let $\bI$ be a small category and $D\colon\bI\to\GBC$ be a diagram. The limit of this diagram exists iff the limit of $\iota\circ D$ in \GgBC~is locally bounded. \\The analogous statement
	holds for colimits.
\end{lem}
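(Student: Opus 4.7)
The plan is to exploit the fact that $\iota\colon\GBC\to\GgBC$ is fully faithful with essential image exactly the full subcategory of locally bounded generalized bornological coarse spaces — that is, a $\GgBC$-object comes from $\GBC$ if and only if every singleton is bounded. Once this observation is in place, both directions of the lemma reduce to transferring universal properties along $\iota$.

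For the forward direction of the limit statement, I would assume that $L:=\lim_{\bI}D$ exists in \GBC~and apply \thref{pres.lim}, which produces an isomorphism $\iota(L)\cong\lim_{\bI}(\iota\circ D)$ in \GgBC. Since $\iota(L)$ lies in the image of $\iota$, it is locally bounded, and hence so is the limit of $\iota\circ D$.

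For the converse, I would assume $L:=\lim_{\bI}(\iota\circ D)$ is locally bounded in \GgBC~and argue that, viewed as an object of \GBC, it already satisfies the universal property of $\lim_{\bI}D$. The canonical morphisms $L\to\iota(D(j))$ correspond via full faithfulness to morphisms $L\to D(j)$ in \GBC. Given any test cone $(T, f_j\colon T\to D(j))_{j\in\bI}$ in \GBC, I would apply $\iota$ to obtain a cone in \GgBC; the universal property of $L$ in \GgBC~then produces a unique factorization $\iota(T)\to L$, and since both source and target lie in the image of $\iota$, full faithfulness descends this uniquely to the required morphism $T\to L$ in \GBC.

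The colimit case proceeds entirely analogously: the forward direction again uses \thref{pres.lim}, while for the converse I would dualize the cone argument, relying on the fact that if $\colim_{\bI}(\iota\circ D')$ is locally bounded then it lies in the image of $\iota$ and inherits the colimit universal property in \GBC~from the one in \GgBC. I expect no real obstacle here; the whole argument rests on the identification of \GBC~as the locally bounded full subcategory of \GgBC~together with \thref{pres.lim}, with the remaining work being routine bookkeeping of universal properties via full faithfulness.
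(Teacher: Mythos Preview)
Your proposal is correct and follows essentially the same argument as the paper: both directions use \thref{pres.lim} for the forward implication and full faithfulness of $\iota$ (identifying \GBC~with the locally bounded objects of \GgBC) to transfer the universal property for the converse. You spell out the cone argument in more detail than the paper does, but the underlying reasoning is identical.
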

\begin{proof}
	If the limit of $D$ exists in \GBC, then it is isomorphic to the limit of  $\iota\circ D$
	in \GgBC~(by \thref{pres.lim}) and hence the latter is locally bounded. 
	On the other hand if there exists a locally bounded limit of $\iota\circ D$ in \GgBC, then
	we can view it as an object in \GBC~and this object fulfills the universal property of $\lim_{\bI}D$ as \GBC~is a full subcategory of \GgBC.
\end{proof}

\begin{cor}\thlabel{cl.almost.compl}
	The category \GBC~has all non-empty limits.
\end{cor}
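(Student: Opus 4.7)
The plan is to combine the completeness of $\GgBC$ with the recognition criterion in \thref{lim.recog}: given a non-empty small diagram $D\colon\bI\to\GBC$, I would form the limit $L:=\lim_{\bI}(\iota\circ D)$, which exists because $\GgBC$ is complete, and then show that $L$ is locally bounded. Once that is established, \thref{lim.recog} immediately upgrades $L$ to a limit of $D$ inside $\GBC$.

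To prove local boundedness of $L$, I would pick any object $i_0\in\bI$ (here is where non-emptiness enters in an essential way) and consider the canonical projection $f_{i_0}\colon L\to \iota(D(i_0))$, which is a morphism in $\GgBC$. Since $D(i_0)$ lies in $\GBC$, it is locally bounded by definition of the classical bornology. By \thref{map.loc.small}(1), properness of $f_{i_0}$ forces $L$ to be locally bounded as well. Hence the hypothesis of \thref{lim.recog} is satisfied, and $L$ (viewed as a classical bornological coarse space) represents $\lim_{\bI} D$ in $\GBC$.

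There is no real obstacle beyond making sure the argument only requires a single object of $\bI$ to be available; this is precisely why the statement is restricted to \emph{non-empty} limits. The empty limit would be the final object, which is ruled out by \thref{no.final}. Everything else is assembled from results already proved: completeness of $\GgBC$, preservation of limits by $\iota$ (\thref{pres.lim}), and the locally-bounded recognition criterion.
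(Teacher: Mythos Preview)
Your proof is correct and matches the paper's argument essentially verbatim: form the limit in $\GgBC$, use a projection to a single $D(i_0)$ together with \thref{map.loc.small} to see it is locally bounded, and conclude via \thref{lim.recog}. The reference to \thref{pres.lim} in your closing summary is superfluous (only \thref{lim.recog} is needed), but the argument itself is exactly the paper's.
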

\begin{proof}
	Let $D\colon\bI\to\GBC$ be a diagram for a small and non-empty category~$\bI$. Let $L:=\lim_{\bI}\iota\circ D$ be the limit in \GgBC~and choose some $j$ in $\bI$, then the projection
	$L\to \iota(D(j))$ witnesses that $L$ is locally bounded (by \thref{map.loc.small}). \thref{lim.recog} yields the claim.
\end{proof}

\thref{lim.recog} gives also a characterization for which diagrams in \GBC~have a colimit. This was already conjectured by the authors of \cite{Uli}. 
We let denote by $F$ the forgetful functor $F\colon\GBC\to\GSet$.
\begin{cor}
	Let $\bI$ be a small category and let $D\colon\bI\to\GBC$ be a diagram. Denote by $X_i$ the object $D(i)$ in $\GBC$.
	We write $X$ for the colimit of the diagram $F\circ D$ in $\GSet$. Furthermore, let $f_i$ denote the canonical maps $F(X_i)\to X$. \ The colimit of the diagram
	$D$ exists in \GBC, iff the diagram is \emph{colimit admissible}, i.e. 
	\begin{align*}
	\tag{*}\forall b\in X\ \forall n\in\N\ &\forall k,i_1,\dots,i_n\in\bI,\ 
	\forall (U_j)_{j\in\{i_1,\dots,i_n\}}\in\textstyle\prod_j\cC_{X_{i_j}}: \\ &f_k^{-1}\Big((f_{i_n}\times f_{i_n})(U_n)\big[\cdots (f_{i_1}\times f_{i_1})(U_1)[\{b\}]\big]\Big)
	\in\cB_{X_k}.
	\end{align*}
\end{cor}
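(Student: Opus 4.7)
The plan is to reduce the statement to \thref{lim.recog}: by that lemma, the colimit of $D$ in $\GBC$ exists if and only if the colimit $Y$ of $\iota\circ D$ in $\GgBC$ is locally bounded. Since the underlying $\Gamma$-set of $Y$ is the set $X$ appearing in the statement (by \thref{gen.colim}), it therefore suffices to show that local boundedness of $Y$ is equivalent to the condition (*).

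First I would unpack what local boundedness of $Y$ means in terms of the explicit presentation of $\cC_Y$ and $\cB_Y$ given in \thref{gen.colim}. Since the generating set for $\cB_Y$ is closed under taking subsets (if $B$ lies in the generator set, then so does every subset of $B$, because $f_k^{-1}(V[\cdot])$ is monotone and $\cB_{X_k}$ is closed under subsets), a singleton $\{b\}\subseteq Y$ is bounded if and only if it already belongs to the generator set, i.e.\ if and only if
\[
\forall V\in\cC_Y\ \forall k\in\bI\colon\quad f_k^{-1}\bigl(V[\{b\}]\bigr)\in\cB_{X_k}.
\]
Thus local boundedness of $Y$ is equivalent to this property holding for every $b\in X$.

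Next I would eliminate the universal quantifier over $V\in\cC_Y$ in favour of the generators of $\cC_Y$. Recall that $\cC_Y$ is the coarse structure generated by the entourages $(f_i\times f_i)(U)$ with $U\in\cC_{X_i}$; hence every $V\in\cC_Y$ is contained in a finite union of finite compositions of such generators (and $\Delta_Y$, which itself arises as $(f_i\times f_i)(\Delta_{X_i})$). Using that $V[\{b\}]$ is monotone in $V$, that thickening distributes over unions ($(V_1\cup V_2)[B]=V_1[B]\cup V_2[B]$), and that composition corresponds to iterated thickening ($(V_1\circ V_2)[B]=V_1[V_2[B]]$), I would deduce that $f_k^{-1}(V[\{b\}])$ lies in a finite union of sets of the form
\[
f_k^{-1}\Bigl((f_{i_n}\times f_{i_n})(U_n)\bigl[\cdots(f_{i_1}\times f_{i_1})(U_1)[\{b\}]\bigr]\Bigr).
\]
Since $\cB_{X_k}$ is closed under subsets and finite unions, the boundedness of $f_k^{-1}(V[\{b\}])$ for all $V\in\cC_Y$ is therefore equivalent to condition (*), finishing the equivalence. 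The converse direction is immediate since each iterated composition itself lies in $\cC_Y$.

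The main bookkeeping obstacle is the combinatorial decomposition in the third paragraph: one has to verify carefully that \emph{every} entourage of $\cC_Y$ really is a subset of a finite union of compositions of the explicit generators, and that the thickening operation commutes with these operations in the way asserted. This is routine (and was in essence already used in the proof of \thref{coprod} and \thref{coequ}), but writing it down cleanly requires a short induction on the number of closure operations used to produce $V$ from the generators.
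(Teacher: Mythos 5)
Your proof is correct and takes essentially the same route as the paper: both reduce via \thref{lim.recog} to checking local boundedness of the colimit in \GgBC, then verify this from the explicit description of $\cB_Y$ and the generators of $\cC_Y$. The only difference is organizational — the paper proves the forward direction by a direct argument in $\GBC$ rather than through \thref{lim.recog}, and states the reduction from arbitrary entourages to finite unions of compositions of generators without spelling it out, whereas you make that bookkeeping explicit.
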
 
\begin{proof}
	First assume that the colimit exists in $\GBC$. Since its underlying set needs to be $X$ (\thref{adjoint}), we refer to the colimit in $\GBC$ also by $X$.
	Also the maps $f_i$ become morphisms $X_i\to X$. \ We want to verify (*):\\
	For any $b$ in $X$, the subset $\{b\}$ is bounded by definitions. Furthermore -- since the maps $f_{i_j}$ are controlled and the subsets $U_{i_j}$ are entourages --
	the subsets $(f_{i_j}\times f_{i_j})(U_{i_j})$ are entourages on $X$. Therefore the successively thickened subset in (*) is still bounded in $X$. So its pre-image under $f_k$
	is bounded in $X_k$, which shows one direction of the claim.\\[.5em]
	On the other
	hand assume $D$ is colimit admissible, i.e. assume that (*) holds. 
	First consider the diagram $F\circ D$. Since $\GgBC$ is co-complete, it has a colimit, which we denote by $L$. \ 
	By \thref{lim.recog} we are done, if we verify, that $L$ is locally bounded. For this we
	choose an arbitrary point $b$ in $L$\footnote{Note that the underlying set of $L$ needs to be $X$ since the forgetful functor preserves colimits.}. We want to show, that
	$\{b\}\in\cB_L$. Recalling the definition of $\cB_L$ in \thref{gen.colim}, we have to verify, that for all $k$ in $\bI$ and for all $U$ in $\cC_L$, the pre-image $f_k^{-1}(U[b])$
	is bounded in $X_k$.
	The coarse structure $\cC_L$ is generated by subsets $(f_i\times f_i)(U)$ for $i$ in $\bI$ and $U$ in $\cC_{X_i}$. Therefore the condition (*) precisely ensures that
	for an \emph{arbitrary} entourage $U$ in $\cC_L$ we have $f_k^{-1}(U[b])\in\cB_{X_k}$.
\end{proof}

\clearpage
\section{Coarse homology theory}
	The authors in \cite{Uli} elaborate the notion (and generalize it to the equivariant situation in \cite{equ.Uli}) of 
	motivic coarse spectra on which large parts of their following work bases on. We imitate these definitions to construct the notion of 
	motivic
	coarse spectra over generalized bornological coarse spaces. The aim of this section is to show, that motivic coarse spectra over \GBC~are equivalent to those over \GgBC.
	
	We just give the necessary sequence of definitions needed for defining motivic coarse spaces and spectra. We will not go into details or give instructive examples. Both can be richly found
	in \cite{Uli}. In particular all stated assertions without a proof can be found proven in that paper.

	\subsection{Basic notions}
		In this subsection we briefly give some notions for generalized bor\-no\-lo\-gical coarse spaces needed for the following.

	Let $X$ be an object in $\GgBC$.
	
	\begin{defi}
		A \emph{$\Gamma$-equivariant big family} 
		$\cY$ on $X$ is a filtered family of $\Gamma$-invariant subsets $(Y_i)_{i\in I}$ of $X$ such that for every $i$ in $I$ and every entourage $U$ in $\cC_X$ there exists an $j$ in $I$ 
		such that
		$U[Y_i]\subseteq Y_j$.\\
		A \emph{$\Gamma$-equivariant complementary pair} on $X$ is a pair $(Z,\cY)$ consisting of a $\Gamma$-invariant subset $Z$ of $X$ and a 
		$\Gamma$-equivariant big family $\cY$ on $X$ such that there exists an $i$ in $I$ with $Z\cup Y_i=X$.
	\end{defi}
	
	Consider two morphisms $f,g\colon X\to Y$ between $\Gamma$-equivariant generalized bornological coarse spaces. Then $f$ and $g$ are said to be \emph{close to each other}, if
	$(f\times g)(\Delta_X)\in\cC_Y$.
	
	\begin{rmk}
		The morphisms $f$ and $g$ are close to each other iff $h\colon\{0,1\}_{\max,\max}\otimes X\to Y$ defined by $h(0,x)=f(x)$ and $h(1,x)=g(x)$ for all $x$ in $X$, is a morphism.
	\end{rmk}
	
	\begin{defi}
		The morphism $f\colon X\to Y$ is an \emph{equivalence} of $\Gamma$-equivariant generalized 
		bornological coarse spaces, if there exists a morphism $f'\colon Y\to X$ such that $f\circ f'$ and $f'\circ f$ are close
		to the respective identities. \ In this case we write $X\simeq_c Y$.
	\end{defi}
	
	\begin{defi}
		The generalized bornological coarse space $X$ is called \emph{flasque} if it admits a morphism $f\colon X\to X$ with the following properties:
		\begin{enumerate}
			\item The morphisms $f$ and $\mathrm{id}_X$ are close to each other.
			\item For any entourage $U$ in $\cC_X$ the union $\bigcup_{k\in\N}(f^k\times f^k)(U)$ is  an entourage of $X$.
			\item For any bounded subset $B$ in $\cB_X$ there exists some $k$ in $\N$ such that $f^k(X)\cap\Gamma B=\emptyset$.
		\end{enumerate}
	\end{defi}
	
	\begin{lem}\thlabel{huge.is.flasque}
		For any space $X$ in $\GgBC$ the subspace $X_h$ of unbounded points is flasque.
	\end{lem}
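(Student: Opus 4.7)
The plan is to observe that the subspace bornology on $X_h$ is the trivial one $\{\emptyset\}$, after which the flasqueness becomes a triviality witnessed by the identity map.

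First I would unpack the subspace structure on $X_h$ from \thref{subspace.str}, applied to the inclusion $\iota\colon X_h\hookrightarrow X$. The key observation is that every bounded subset $B\in\cB_X$ is contained in $X_b$: indeed, if $x\in B$, then $\{x\}\subseteq B$, so $\{x\}\in\cB_X$ by closure under subsets, meaning $x\in X_b$. Consequently $\iota^{-1}(B)=B\cap X_h=\emptyset$ for every $B\in\cB_X$, so the set of generators of $\iota^*\cB_X=\cB_{X_h}$ consists solely of the empty set, and hence $\cB_{X_h}=\{\emptyset\}$.

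Next I would take $f:=\mathrm{id}_{X_h}$ and verify the three conditions of flasqueness. Condition (1) is automatic since $(f\times\mathrm{id}_{X_h})(\Delta_{X_h})=\Delta_{X_h}\in\cC_{X_h}$. For condition (2), $\bigcup_{k\in\N}(f^k\times f^k)(U)=U$ for every entourage $U$, which trivially lies in $\cC_{X_h}$. Finally, for condition (3), any bounded $B\in\cB_{X_h}$ must be empty by the first step, hence $\Gamma B=\emptyset$ and $f^k(X_h)\cap\Gamma B=\emptyset$ for every $k\in\N$.

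I do not expect any real obstacle here; the only point that requires care is to notice that the definition of a generalized bornology still only forces elements of $X_b$ into the bornology (the bound on singletons was the axiom we dropped), so that the subspace structure on $X_h$ really is trivial. Once this conceptual point is made, the verification reduces to inserting $f=\mathrm{id}$ into the three conditions and reading off that each holds vacuously or tautologically. The $\Gamma$-equivariance of $f$ is obvious, so the argument passes through $\GgBC$ without modification.
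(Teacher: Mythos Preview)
Your proof is correct and follows exactly the same route as the paper: the identity on $X_h$ witnesses flasqueness, with conditions (1) and (2) trivial and condition (3) holding because the only bounded subset of $X_h$ is $\emptyset$. The paper's version is a one-liner that asserts the triviality of $\cB_{X_h}$ without justification, whereas you spell out why every $B\in\cB_X$ lies in $X_b$; this extra step is a helpful clarification but not a different idea.
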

	\begin{proof}
		The identity $\mathrm{id}_{X_u}$ witnesses the flasqueness: The conditions (1)+(2) are obviously fulfilled and since the only bounded subset is the empty set, we conclude the proof.
	\end{proof}
	
Let $U$ in $\cC_X^\Gamma$ be a $\Gamma$-invariant entourage. 
If we replace $\cC_X$ by the coarse structure generated by $U$, then this 
coarse structure is clearly compatible with $\cB_X$ and defines a new $\Gamma$-equivariant generalized bornological
coarse space, which we denote by $X_U$.
	
	\begin{rmk}
		The natural morphisms $X_U\to X$ induce an isomorphism \[ \colim_{U\in\cC_X^\Gamma}X_U\iso X\]
		in \GgBC.
	\end{rmk}

	Consider a co-complete stable $\infty$-category $\cC$ and a functor $F\colon\GgBC\to\cC$. 
		For a $\Gamma$-equivariant big family $\cY$ on $X$ we define $F(\cY):=\colim_{i\in I}F(U_i)$.

	\begin{defi}\thlabel{def.class.h.t}
		The functor $F\colon\GgBC\to\cC$ is called a \emph{$\cC$-valued coarse homology theory}, if:
		\begin{enumerate}
			\item (\emph{Excision}) For all $\Gamma$-equivariant complementary pairs $(Z,\cY)$ on a space $X$ in $\GgBC$, the following square is co-Cartesian:
			\[
				\xymatrix{
					F(Z\cap\cY)\ar[r]\ar[d] & F(Z)\ar[d]\\
					F(\cY)\ar[r] & F(X).
			}
			\]
			\item (\emph{Coarse invariance}) If $X\to X'$ is a coarse equivalence in $\GgBC$, then the induced morphism $F(X)\to F(X')$ is an equivalence in $\cC$.
			\item (\emph{vanishing on flasques}) If $X$ is a flasque generalized bornological coarse space, then the canonical map $0\to F(X)$ is an equivalence in $\cC$.
			\item (\emph{$U$-continuity}) For every $X$ in $\GgBC$ the collection of morphisms $X_U\to X$ induces an equivalence 
			\[
				\colim_{U\in\cC_X^\Gamma}F(X_U)\to F(X).
			\]
		\end{enumerate}
	\end{defi}

\subsection{Generalized motivic coarse spaces}
	In this subsection we elaborate on motivic coarse spaces. Essentially we just copy all definitions from \cite{Uli} and \cite{equ.Uli}. 
	If one reads this entire subsection without tildes, one gets the definitions
	of motivic coarse spaces on \GBC.
	
	We write \textbf{sSet} for the category of simplicial sets and we denote by $W$ the class of weak homotopy equivalences. We obtain the 
	\emph{$\infty$-category of spaces} $\mathbf{Spc}:=\mathbf{sSet}[W^{-1}]$. \\Further, for any $\infty$-category $\mathbf{C}$ we denote by $\mathbf{PSh}(\mathbf{C}):=
	\mathbf{Fun}(\mathbf{C}^{op},\mathbf{Spc})$ the $\infty$-category of \emph{space-valued presheaves} on \textbf{C} and by \[\mathrm{yo}\colon\mathbf{C}\to\mathbf{PSh}(\mathbf{C})\] the 
	Yoneda-embedding.

\begin{lem}\thlabel{Gro.Top}
	There is a subcanonical Grothendieck topology $\tau_\chi$ on \GgBC~such that the $\tau_\chi$-sheaves are exactly 
	those presheaves $E$ on \GgBC~which satisfy $E(\leer)\simeq\nolinebreak\ast$ and such that for  any $\Gamma$-equivariant complementary pair $(Z,\mathcal Y)$
	the following square is cartesian in \textbf{\upshape Spc} (in which case we say, that $E$ \emph{fulfills descent w.r.t. $(Z,\cY)$}).
	\[
		\xymatrix{E(X)\ar[r]\ar[d] & E(Z)\ar[d]\\ E(\mathcal Y)\ar[r] & E(Z\cap\mathcal Y).}
	\]
\end{lem}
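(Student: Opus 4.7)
The plan is to mimic the construction used in the classical setting of \cite{Uli}, adapted to the generalized framework. I would first define a coverage on $\GgBC$: declare the empty family to cover the initial object $\leer$, and for every object $X$ and every $\Gamma$-equivariant complementary pair $(Z,\cY=(Y_i)_{i\in I})$ on $X$, declare the family of inclusions $\{Z\hookrightarrow X\}\cup\{Y_i\hookrightarrow X\}_{i\in I}$, where each subspace carries the pullback structure of \thref{subspace.str} and \thref{gsubstr}, to be a covering of $X$. The first task is to verify the pullback axiom: given any morphism $f\colon X'\to X$, the pair $(f^{-1}(Z),(f^{-1}(Y_i))_{i\in I})$ is again a $\Gamma$-equivariant complementary pair on $X'$. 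Filteredness and $\Gamma$-invariance transfer immediately along $f$, the big-family property follows because preimages of $U$-thickenings along controlled maps are eventually absorbed by the preimage filtration, and $f^{-1}(Z)\cup f^{-1}(Y_{i_0})=X'$ whenever $Z\cup Y_{i_0}=X$. This gives a pretopology, hence a Grothendieck topology $\tau_\chi$.

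Next I would prove subcanonicality. Given a covering $\{Z\}\cup\{Y_i\}_{i\in I}$ of $X$ and any target $W\in\GgBC$, a descent datum consists of a morphism $\varphi_Z\colon Z\to W$ together with compatible morphisms $\varphi_i\colon Y_i\to W$. On underlying $\Gamma$-sets the gluing is unique because $X=Z\cup Y_{i_0}$ for some $i_0\in I$; the resulting map $\varphi\colon X\to W$ is controlled since $\cC_X$ is generated by entourages eventually contained in $(Z\times Z)\cup(Y_j\times Y_j)$ for sufficiently large $j$, using the big-family property together with $X=Z\cup Y_j$, and it is proper since every bounded $B\subseteq W$ pulls back to the union of $\varphi_Z^{-1}(B)$ and the $\varphi_i^{-1}(B)$. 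The empty cover of $\leer$ forces representables to send $\leer$ to $*$, so representables are $\tau_\chi$-sheaves.

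Finally I would identify the resulting sheaves. The key reduction is that the \v Cech nerve of $\{Z\}\cup\{Y_i\}_{i\in I}$, viewed as a diagram in $\Spc$ after applying a presheaf $E$, is equivalent to the Mayer--Vietoris square of the statement, because the filteredness of $\cY$ together with the big-family property makes iterated intersections $Y_{i_1}\cap\cdots\cap Y_{i_n}$ cofinally collapse onto a single $Y_j$ absorbing them, while $Z\cap Y_i$ assembles to $Z\cap\cY$ in the colimit. Hence the sheaf condition for $E$ with respect to such a covering is equivalent to the Cartesian square in the statement, and the empty cover of $\leer$ yields $E(\leer)\simeq*$. Conversely any presheaf satisfying both conditions is a $\tau_\chi$-sheaf on generators, hence a sheaf.

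The main obstacle I expect lies in this last reduction: matching the full \v Cech complex of a potentially infinite covering family with the four-term square. One exploits the filteredness of $I$ to identify iterated intersections with members of $\cY$, and the compatibility $U[Y_i]\subseteq Y_j$ to verify coherence of the equivalences in the $\infty$-categorical sense. As in \cite[Section 3]{Uli} no property specific to classical bornologies enters this step, so the classical argument transfers verbatim to $\GgBC$.
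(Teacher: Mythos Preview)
Your approach is correct in outline but takes a genuinely different route from the paper. You build a pretopology by declaring the families $\{Z\hookrightarrow X\}\cup\{Y_i\hookrightarrow X\}$ as covers, verify pullback-stability, and then reduce the \v Cech condition to the Mayer--Vietoris square using filteredness of $\cY$. The paper instead works entirely with sieves: for each complementary pair it defines four sieves $\sS_Z$, $\sS_{\cY}$, $\sS_{\cY\cap Z}$, $\sS_{(Z,\cY)}$, observes that they sit in a strict pushout square of categories, and deduces from this that $\lim_{W\in\sS_{(Z,\cY)}}E(W)$ is the pullback $E(Z)\times_{E(Z\cap\cY)}E(\cY)$ for \emph{every} presheaf $E$. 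It then compares two topologies---the one generated by the sieves $\sS_{(Z,\cY)}$ and the finest topology for which all descent-satisfying presheaves are sheaves---and shows their sheaf categories coincide by a sandwich argument. The payoff of the paper's method is that the identification of the sheaf condition with the four-term square is a one-line consequence of the sieve pushout, completely avoiding the infinite \v Cech nerve you flag as the main obstacle; conversely, your approach makes the covering families and their pullback-stability explicit, and gives a self-contained argument for subcanonicality (which the paper outsources to \cite[Lemma~3.12]{Uli}). Your sketch of the \v Cech reduction is correct---iterated intersections of $Y_i$'s collapse cofinally, and mixed intersections reduce to $Z\cap Y_j$---but making the coherence precise at the $\infty$-level is exactly the work the sieve pushout lets one skip.
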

\begin{proof}
	Let $(Z,\cY)$ be a $\Gamma$-equivariant complementary pair.\\
	\textit{Claim 1}:\quad $E$ fulfills descent for $(Z,\cY)$ iff $E(X)\to\lim_{W\in\sS_{(Z,\cY)}}E(W)$ is an equivalence.\\
	\textit{Proof}: \ 
	We define the following four collections of morphisms:
	\begin{alignat*}{3}
		\sS_{(Z,\cY)}&:=\left\{f\colon W\to X\ \middle|\ \tiny\begin{matrix}f\text{ factors through}\\Z\text{ or some }Y_i\end{matrix}\right\} &&\qquad\qquad 
		\sS_Z&&:=\big\{W\to Z\big\}\\
		\sS_\cY&:=\big\{W\to Y_i\ \big|\ Z_i\in\cY\big\} &&\qquad\qquad \sS_{\cY\cap Z}&&:=\big\{W\to Y_i\cap Z\ \big|\ Y_i\in\cY\big\}
	\end{alignat*}
	We obtain a pushout-diagram 
	\[
		\xymatrix{\sS_{\cY\cap Z}\ar[r]\ar[d]\ar@{}[dr]|(0.65){\text{\pigpenfont{I}}} & \sS_Z\ar[d]\\
			\sS_\cY\ar[r] & \sS_{(Z,\cY)}}
	\]
	and thus we have 
	\[
		\Fun(\sS_{(Z,\cY)},\Spc)\simeq\Fun(\sS_Z,\Spc)\times_{\Fun(\sS_{\cY\cap Z},\Spc)}\Fun(\sS_\cY,\Spc). \tag{*}
	\]
	Now we claim, that for any presheaf $E$ on $\GgBC$ the following square is a pullback:
	\[\xymatrix{
		\lim\limits_{W\in\sS_{(Z,\cY)}}E(W)\ar[r]\ar[d] & \lim\limits_{W\in\sS_Z}E(W)\ar[d]\\
		\lim\limits_{W\in\sS_\cY}E(W)\ar[r] & \lim\limits_{W\in\sS_{\cY\cap Z}}E(W).}
	\]
	To show this, let $A$ in $\Spc$ and denote by $\underline A$ the constant-$A$-functor. Then we calculate
	\begin{align*}
		\Map_\Spc\left(A,\lim\nolimits_{\sS_{(Z,\cY)}}E\right)&\simeq\Map_{\Fun(\sS_{(Z,\cY)},\Spc)}\left(\underline A, E\right)\\
		&\underset{(*)}{\simeq}\Map_{\Fun(\sS_\cY,\Spc)}\left(\underline A,E\right)\times_{\Map_{\Fun(\sS_{\cY\cap Z},\Spc)}\left(\underline A,E\right)}
		\Map_{\Fun(\sS_Z,\Spc)}\left(\underline A,E\right)\\
		&\simeq \Map\left(A,\lim\nolimits_{\sS_\cY}E\right)\times_{\Map\left(A,\lim_{\sS_{\cY\cap Z}}E\right)}\Map\left(A,\lim\nolimits_{\sS_Z}E\right).
	\end{align*}
	Next, we observe that $Z\to Z$ is final in $\sS_Z$, hence $\lim_{W\in\sS_Z}E(W)\simeq E(Z)$ and likewise we get
	$\lim_{W\in\sS_\cY}E(W)\simeq\lim_{i\in I}E(Y_i)$ and $\lim_{W\in\sS_{\cY\cap Z}}E(W)\simeq\lim_iE(Y_i\cap Z)$. We therefore obtain the following
	pullback-square:
	\[
		\xymatrix{
			\lim\limits_{W\in\sS_{(Z,\cY)}}E(W)\ar[r]\ar[d]\ar@{}[dr]|(0.35){\text{\pigpenfont{A}}} & E(Z)\ar[d]\\
			\lim\limits_iE(Y_i)\ar[r]&\lim\limits_iE(Y_i\cap Z).
		}
	\]
	By universal property of the pullback, the inclusions $Z\to X$ and $Y_i\to X$ induce a morphism $E(X)\to\lim_{\sS_{(Z,\cY)}}E(W)$, which clearly is an equivalence if and only
	if the presheaf $E$ fulfills decent for $(Z,\cY)$.\hfill $\Box_{\text{Claim 1}}$\\[.3cm]
	For a presheaf satisfying descent for $(Z,\cY)$ we let $\tau_E$ be the finest Grothendieck topology on $\GgBC$ such that $E$ is a $\tau_E$-sheaf. Further we define
	\[\tau_\Box:=\bigcap_{\substack{E\text{ fulfills descent}\\\text{w.r.t. all }(Z,\cY)}}\tau_E.\]
	It is the finest Grothendieck topology such that all presheaves satisfying descent w.r.t. all complementary pairs, are $\tau_\Box$-sheaves.\\
	Next, we let $\tau_\cS$ denote the Grothendieck topology generated by all sieves $\sS_{(Z,\cY)}$ for all complementary pairs $(Z,\cY)$.\\
	By Claim 1, we have $\tau_\cS\subseteq\tau_\Box$ and therefore $\mathrm{Sh}^{\tau_\Box}\subseteq\mathrm{Sh}^{\tau_\cS}$.\\
	Again, by Claim 1, we have \[\mathrm{Sh}^{\tau_\cS}\subseteq\big\{E\in\PSh(\GgBC)\ \big|\ E\text{ fulfills descent w.r.t all }(Z,\cY)\big\}=:\mathcal M.\]
	Finally, by definition we have $\mathcal M\subseteq\mathrm{Sh}^{\tau_\Box}$. All together we obtain 
	\[
		\mathrm{Sh}^{\tau_\Box}\subseteq\mathrm{Sh}^{\tau_\cS}\subseteq\mathcal M\subseteq\mathrm{Sh}^{\tau_\Box}.
	\]
	Thus, we can choose $\tau_\chi$ as $\tau_\Box$ and the upper chain of inclusions shows that $\tau_\Box$-sheaves are precisely
	those presheaves lying in $\mathcal M$, i.e. fulfilling descent for any complementary pair.\\[.3cm]
	To see that $\tau_\chi$ is subcanonical, we refer to \cite[Lemma 3.12]{Uli}.
\end{proof}

	Let $\mathbf{Sh}(\GgBC)$ denote the full subcategory of $\tau_\chi$-sheaves in $\mathbf{PSh}(\GgBC)$. Then we obtain the usual sheafification adjunction \[
		\adjunction{\tilde L}{\mathbf{PSh}(\GgBC)}{\mathbf{Sh}(\GgBC)}{\mathrm{incl}}.
	\]

Consider a sheaf $E$ in $\Sh(\GgBC)$ and let $\mathbb{I}:=\{0,1\}_{\max,\max}$.

\begin{defi}
	The sheaf $E$ is called \emph{coarsely invariant} if for all generalized bornological coarse spaces $X$ in $\GgBC$, the projection $\mathbb{I}\otimes X\to X$ induces
	an equivalence \[E(X)\overset\simeq\longrightarrow E\left(\mathbb{I}\otimes X\right)\] in $\mathbf{Spc}$.
\end{defi}

\begin{lem}
	The collection of all coarsely invariant sheaves form a full localizing subcategory $\Sh^\I(\GgBC)$ of $\Sh(\GgBC)$. We obtain an adjunction
	\[
		\adjunction{\tilde{H}^\I}{\Sh(\GgBC)}{\Sh^\I(\GgBC)}{\mathrm{incl}}.
	\]
\end{lem}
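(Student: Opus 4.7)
The plan is to identify $\Sh^\I(\GgBC)$ with the full subcategory of $S$-local objects in $\Sh(\GgBC)$ for a suitable class of morphisms $S$, and then apply the standard Bousfield-localization theorem for presentable $\infty$-categories to produce simultaneously the localizing structure and the left adjoint $\tilde{H}^\I$.

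First I would fix the class
\[
S := \menge{\tilde L\bigl(\mathrm{yo}(\pi_X)\bigr)}{X \in \GgBC},
\]
where $\pi_X \colon \I\otimes X \to X$ denotes the canonical projection. The sheafification adjunction $\tilde L \dashv \mathrm{incl}$ combined with the Yoneda lemma yields, for any sheaf $E$ and any $Y$ in $\GgBC$, the natural equivalence $\Map(\tilde L(\mathrm{yo}(Y)), E) \simeq E(Y)$. Consequently, a sheaf $E$ is $S$-local if and only if every projection $\pi_X$ induces an equivalence $E(X) \to E(\I\otimes X)$ in $\mathbf{Spc}$, which is precisely the defining property of coarse invariance. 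Hence $\Sh^\I(\GgBC)$ coincides with the full subcategory of $S$-local objects; in particular it is automatically closed under arbitrary small limits taken in $\Sh(\GgBC)$.

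Next I would argue that $\Sh(\GgBC)$ is a presentable $\infty$-category: this follows from its description as an accessible reflective localization of $\PSh(\GgBC)$ at the covers of the subcanonical Grothendieck topology $\tau_\chi$ provided by \thref{Gro.Top}. By the standard Bousfield-localization theorem in presentable $\infty$-categories, the full subcategory of objects local with respect to an accessible class of morphisms is again presentable and reflective, with accessible left adjoint. Applied to our $S$ and to $\Sh(\GgBC)$, this produces the adjunction with left adjoint $\tilde{H}^\I$ and exhibits $\Sh^\I(\GgBC) \hookrightarrow \Sh(\GgBC)$ as a localizing subcategory.

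The principal obstacle is set-theoretic: since $\GgBC$ has a proper class of objects, $S$ is a priori not a set. The remedy is to replace $S$ by an essentially small cofinal subclass. Concretely, for a sufficiently large regular cardinal $\kappa$, one fixes a small set of representatives among spaces whose underlying set and generating entourages and bounded subsets are bounded by $\kappa$, and uses that every $X$ in $\GgBC$ may be written as a $\kappa$-filtered colimit of such spaces (compatibly with the explicit formulas of \thref{gen.colim} and the $U$-continuity picture) so that $S$-locality reduces to $S_\kappa$-locality for a genuinely small set $S_\kappa$. Once this accessibility reduction is in place, the Bousfield localization machinery applies directly and delivers both assertions of the lemma.
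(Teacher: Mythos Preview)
Your first two paragraphs are exactly the paper's proof: identify $\Sh^\I(\GgBC)$ as the full subcategory of objects local with respect to the projections $\mathrm{yo}(\I\otimes X)\to\mathrm{yo}(X)$ and invoke \cite[Prop.~5.5.4.15]{HTT}. The only cosmetic difference is that the paper writes $\mathrm{yo}$ rather than $\tilde L\circ\mathrm{yo}$; this is justified because $\tau_\chi$ is subcanonical (\thref{Gro.Top}), so representable presheaves are already sheaves and the two formulations coincide.

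Your third paragraph, however, is both unnecessary and flawed. The paper (following \cite{Uli} and the conventions of \cite{HTT}) implicitly works relative to a fixed Grothendieck universe, so that $\GgBC$ is small and $S$ is a set; nothing further is needed. Your proposed reduction does not go through as written: expressing $X$ as a $\kappa$-filtered colimit $\colim_i X_i$ in $\GgBC$ does \emph{not} yield $\mathrm{yo}(X)\simeq\colim_i\mathrm{yo}(X_i)$ in $\Sh(\GgBC)$, since the Yoneda embedding almost never preserves colimits. Hence $S_\kappa$-locality need not imply $S$-locality, and the argument collapses. Simply drop this paragraph.
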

\begin{proof}
	The subcategory of coarsely invariant sheaves is the full subcategory of objects which are local with respect to the collection of morphisms
	$\mathrm{yo}(\I\otimes X)\to\mathrm{yo}(X)$. Note that these morphisms are morphisms of sheaves by \thref{Gro.Top}.
	Now the claim follows from \cite[Prop 5.5.4.15]{HTT}.
\end{proof}

\begin{rmk}
	To show that a sheaf is coarsely invariant, it suffices to show that morphisms, which are close to each other are mapped to equivalent morphisms.
\end{rmk}

\begin{defi}
	A sheaf $E$ in $\Sh^{\I}(\GgBC)$ \emph{vanishes on flasques} if $E(X)\simeq\ast$ for any flasque generalized $\Gamma$-bornological coarse space $X$. 
\end{defi}
\begin{lem}
	The collection of all coarsely invariant sheaves that vanish on flasques form
	a full localizing subcategory $\Sh^{\I,\mathrm{fl}}(\GgBC)$ of $\Sh^{\I}(\GgBC)$. We get a corresponding adjunction 
	\[
		\adjunction{\tilde{\mathrm{Fl}}}{\Sh^{\I}(\GgBC)}{\Sh^{\I,\mathrm{fl}}(\GgBC)}{\mathrm{incl}}.
	\]
\end{lem}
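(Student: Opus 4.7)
The plan is to proceed in complete analogy with the proof of the preceding lemma: I identify $\Sh^{\I,\mathrm{fl}}(\GgBC)$ as the full subcategory of $\Sh^{\I}(\GgBC)$ consisting of those objects which are local with respect to an appropriate family of morphisms, and then invoke \cite[Prop.~5.5.4.15]{HTT} to obtain the reflective localization adjunction.

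For the identification, let $0$ denote the initial object of $\Sh^{\I}(\GgBC)$, so that $\mathrm{Map}_{\Sh^{\I}}(0, E) \simeq \ast$ for every sheaf $E$. Since $\tau_\chi$ is subcanonical by \thref{Gro.Top}, the presheaf $\mathrm{yo}(X)$ is already a $\tau_\chi$-sheaf, and so the coarse-invariantification $\tilde H^{\I}\mathrm{yo}(X)$ lives in $\Sh^{\I}(\GgBC)$. By the Yoneda lemma (transported along $\tilde H^{\I}$), we have $\mathrm{Map}_{\Sh^{\I}}(\tilde H^{\I}\mathrm{yo}(X), E) \simeq E(X)$ for every $E \in \Sh^{\I}(\GgBC)$. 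Hence the condition $E(X) \simeq \ast$ on a flasque space $X$ is equivalent to $E$ being local with respect to the unique morphism $0 \to \tilde H^{\I}\mathrm{yo}(X)$. Consequently $\Sh^{\I,\mathrm{fl}}(\GgBC)$ coincides with the full subcategory of $\Sh^{\I}(\GgBC)$ spanned by those objects local with respect to the family
\[
    \bigl\{\, 0 \to \tilde H^{\I}\mathrm{yo}(X) \;\big|\; X \text{ a flasque space in } \GgBC \,\bigr\}.
\]

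Having realized the subcategory in this form, a direct application of \cite[Prop.~5.5.4.15]{HTT} immediately produces the desired reflective adjunction, with left adjoint $\tilde{\mathrm{Fl}}$ given by the reflector onto this further localization.

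The only non-routine point is a size issue: a priori the indexing family above runs over a proper class of flasque spaces, and so before invoking \cite[Prop.~5.5.4.15]{HTT} one must reduce to a small set of representatives. This is permissible because $\Sh^{\I}(\GgBC)$ is a presentable $\infty$-category (being an accessible localization of the presheaf category), and it is the same implicit accessibility step that already appears in the proof of the preceding lemma; I will handle it the same way.
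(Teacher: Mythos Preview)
Your proposal is correct and follows essentially the same approach as the paper: identify the subcategory as the local objects with respect to a family of morphisms and then apply \cite[Prop.~5.5.4.15]{HTT}. The only cosmetic difference is that the paper uses the morphisms $\mathrm{yo}(\emptyset)\to\mathrm{yo}(X)$ (citing \cite[Rem.~3.24]{Uli}) rather than your $0\to\tilde H^{\I}\mathrm{yo}(X)$; since $\mathrm{yo}(\emptyset)$ is initial in the sheaf category these amount to the same thing, and your explicit remark on the size issue is a welcome addition that the paper leaves implicit.
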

\begin{proof}
	A sheaf  in $\Sh(\GgBC)$ vanishes on a flasque generalized $\Gamma$-bornological coarse space $X$ iff it is local w.r.t. $\mathrm{yo}(\leer)\to\mathrm{yo}(X)$ (see 
	\cite[Rem. 3.24]{Uli}). Therefore:
	The category of coarsely invariant sheaves which vanish on flasques is the full subcategory of those sheaves in $\Sh^{\I}(\GgBC)$ which are local with respect to
	the collection of morphisms $\mathrm{yo}(\leer)\to\mathrm{yo}(X)$.
\end{proof}

\begin{defi}
	We say that a sheaf $E$ in $\Sh^{\I,\mathrm{fl}}(\GgBC)$ is \emph{$u$-continuous} if for every generalized bornological coarse space $X$ in $\GgBC$ the collection of natural morphisms
	$\{X_U\to X\}_{U\in\cC_X^\Gamma}$ induces an equivalence
	\[
		E(X)\overset\simeq\longrightarrow\lim_{U\in\cC_X^\Gamma}E(X_U)
	\]
	in $\mathbf{Spc}$. The full subcategory of all $u$-continuous sheaves in $\Sh^{\I,\mathrm{fl}}(\GgBC)$ is called the category of \emph{generalized motivic coarse spaces} and is 
	denoted by $\GgSpc$.
\end{defi}
\begin{lem}
	The full subcategory $\GgSpc$ of $\Sh^{\I,\mathrm{fl}}(\GgBC)$ is localizing and fits into an adjunction
	\[
		\adjunction{\tilde U}{\Sh^{\I,\mathrm{fl}}(\GgBC)}{\GgSpc}{\mathrm{incl}}.
	\]
\end{lem}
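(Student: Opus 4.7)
The plan is to exhibit $u$-continuity as a locality condition with respect to a suitable class of morphisms in $\Sh^{\I,\mathrm{fl}}(\GgBC)$, and then invoke \cite[Prop 5.5.4.15]{HTT}, exactly as in the proofs of the two preceding lemmas.

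First, for each $X$ in $\GgBC$, I would assemble the canonical morphisms $X_U\to X$ (for $U\in\cC_X^\Gamma$) into a single comparison map
\[
	\alpha_X\colon\colim_{U\in\cC_X^\Gamma}\mathrm{yo}(X_U)\longrightarrow\mathrm{yo}(X)
\]
in $\Sh^{\I,\mathrm{fl}}(\GgBC)$, where $\mathrm{yo}$ denotes the Yoneda embedding followed by the successive localizations $\tilde L$, $\tilde H^\I$, $\tilde{\mathrm{Fl}}$, and the colimit is formed inside $\Sh^{\I,\mathrm{fl}}(\GgBC)$.

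Next I would verify that a sheaf $E$ in $\Sh^{\I,\mathrm{fl}}(\GgBC)$ is local with respect to $\alpha_X$ if and only if the canonical comparison map $E(X)\to\lim_{U\in\cC_X^\Gamma}E(X_U)$ is an equivalence in $\mathbf{Spc}$. This is a routine computation combining the Yoneda identification $\Map(\mathrm{yo}(X),E)\simeq E(X)$ with the standard equivalence $\Map(\colim_U\mathrm{yo}(X_U),E)\simeq\lim_UE(X_U)$, and noting that the induced map on mapping spaces is precisely the comparison map appearing in the $u$-continuity condition. Consequently $\GgSpc$ is exactly the full subcategory of $\Sh^{\I,\mathrm{fl}}(\GgBC)$ spanned by the objects local with respect to the class $\{\alpha_X\}_{X\in\GgBC}$, and \cite[Prop 5.5.4.15]{HTT} then produces the desired reflective localization together with its left adjoint $\tilde U$.

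The main subtlety I expect is a set-size point: a priori $\{\alpha_X\}_X$ is indexed by a proper class, whereas \cite[Prop 5.5.4.15]{HTT} requires a small set of morphisms. This is the very same issue that implicitly appears in the two preceding lemmas, and it is handled by the standard presentability argument: $\Sh^{\I,\mathrm{fl}}(\GgBC)$ is a presentable $\infty$-category and is generated under colimits by a small family of objects, so it suffices to restrict $X$ in the definition of $\{\alpha_X\}_X$ to a set of representatives of such a generating family. Modulo this routine presentability bookkeeping, the argument is entirely parallel to the two preceding lemmas.
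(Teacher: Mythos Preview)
Your proposal is correct and follows essentially the same approach as the paper: the paper's proof simply states that $\GgSpc$ is the full subcategory of objects in $\Sh^{\I,\mathrm{fl}}(\GgBC)$ which are local with respect to the morphisms $\colim_{U\in\cC_X^\Gamma}\mathrm{yo}(X_U)\to\mathrm{yo}(X)$, and then implicitly invokes \cite[Prop~5.5.4.15]{HTT} as in the preceding lemmas. Your version is more explicit---in particular your discussion of the set-size issue is a point the paper leaves unaddressed---but the underlying argument is identical.
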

\begin{proof}
	Similarly as in the proofs above, we argue that $\GgSpc$ is the full subcategory of objects in $\Sh^{\I,\mathrm{fl}}(\GgBC)$ which are local w.r.t. the collection of morphisms
	$\colim_{\scriptscriptstyle U\in\cC_X^\Gamma}\mathrm{yo}(X_U)\to\mathrm{yo}(X)$ for all $X$ in $\GgBC$.
\end{proof}

\begin{defi}
	We define the composition of the functors above as
	\[
		\mathrm{Y\tilde o}:=\tilde U\circ \tilde{\mathrm{Fl}}\circ \tilde{H}^{\I}\circ \tilde L\circ \mathrm{yo}\colon\GgBC\longrightarrow\GgSpc.
	\]
\end{defi}

We collect and summarize this subsection in the following corollary.

\begin{cor}\thlabel{Yo.prop}
	\begin{enumerate}
		\item The $\infty$-category of motivic coarse spaces is presentable and fits into a localization
		\[
			\adjunction{\tilde U\circ \tilde{\mathrm{Fl}}\circ \tilde{H}^{\I}\circ\tilde L}{\mathbf{PSh}(\GgBC)}{\GgSpc}{\mathrm{incl.}}
		\]
		\item If $(Z,\mathcal Y)$ is a complementary pair on a space $X$ in $\GgBC$, then the square
			\[
				\xymatrix{\gYo(Z\cap\cY)\ar[d]\ar[r] & \gYo(\cY)\ar[d]\\
					\gYo(Z)\ar[r] & \gYo(X)}
			\] is co-Cartesian in \GgSpc.
			\item If $X\to X'$ is an equivalence of $\Gamma$-equivariant generalized bornological coarse spaces, then the morphism $\gYo(X)\to\gYo(X')$ is an equivalence in \GgSpc.
			\item If $X$ is flasque in \GgBC, then $\gYo(X)$ is final in \GgSpc.
			\item For every $X$ in $\GgBC$ the canonical morphism defines an equivalence 
			\[\colim_{U\in\cC_X^\Gamma}\gYo(X_U)\overset\simeq\longrightarrow\gYo(X).\]
	\end{enumerate}
\end{cor}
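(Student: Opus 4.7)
The five assertions in this corollary are built into the construction of $\gYo$ as a composition of four reflective localizations $\tilde L$, $\tilde H^{\I}$, $\tilde{\mathrm{Fl}}$, $\tilde U$, each designed to impose precisely one of the four properties listed in the definition of a coarse homology theory. My plan is to treat them uniformly: for every $X$ in $\GgBC$ and every $E$ in $\GgSpc$, the chain of adjunctions produces a natural equivalence
\[
	\Map_{\GgSpc}(\gYo(X),E)\simeq E(X),
\]
so each assertion can be read off by inspecting the local-object condition that the corresponding localization imposes and translating via Yoneda.

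For (1) I combine the four adjunctions into a single composite left adjoint to the inclusion $\GgSpc\hookrightarrow\PSh(\GgBC)$. Since $\PSh(\GgBC)=\Fun(\GgBC^{op},\mathbf{Spc})$ is presentable and each stage is a reflective localization at a \emph{small} generating class of morphisms---the sieves $\sS_{(Z,\cY)}$ (for which essential smallness is extracted from Claim~1 in the proof of \thref{Gro.Top}), the maps $\mathrm{yo}(\I\otimes X)\to\mathrm{yo}(X)$ for $\I$-invariance, the maps $\mathrm{yo}(\leer)\to\mathrm{yo}(X)$ ranging over flasques, and the $u$-continuity maps $\colim_U\mathrm{yo}(X_U)\to\mathrm{yo}(X)$---an application of \cite[Prop.~5.5.4.15]{HTT} at each stage (as used in the proofs of the preceding lemmas) shows that the composite exhibits $\GgSpc$ as a presentable localization of $\PSh(\GgBC)$.

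Parts (2)--(5) then drop out formally. For (2), \thref{Gro.Top} says that every $E\in\GgSpc$ sends a complementary pair to a cartesian square in $\mathbf{Spc}$; applying $\Map_{\GgSpc}(-,E)$ to the candidate pushout square of $\gYo$-images and using the identification above reproduces that square, so by testing against all $E$ the square is cocartesian in $\GgSpc$. For (3), a witness $h\colon\I\otimes X\to Y$ of closeness of $f,g\colon X\to Y$ together with $\I$-invariance $E(X)\simeq E(\I\otimes X)$ forces $E(f)=E(g)$, hence $\gYo(f)\simeq\gYo(g)$; a coarse equivalence is then sent to mutually inverse morphisms in $\GgSpc$. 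Part (4) is immediate from the defining condition $E(X)\simeq\ast$ for flasque $X$, which reads $\Map_{\GgSpc}(\gYo(X),E)\simeq\ast$ for every $E$, i.e.~$\gYo(X)$ is final. For (5) the computation
\[
	\Map_{\GgSpc}\Bigl(\colim_{U\in\cC_X^\Gamma}\gYo(X_U),E\Bigr)\simeq\lim_{U\in\cC_X^\Gamma}E(X_U)\simeq E(X)\simeq\Map_{\GgSpc}\bigl(\gYo(X),E\bigr)
\]
together with the Yoneda lemma identifies the canonical morphism as an equivalence.

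The main obstacle is concentrated in (1): verifying that each of the four localizations is accessible in the sense required by \cite[Prop.~5.5.4.15]{HTT}, which amounts to isolating an essentially small generating class of morphisms for each (the paper's earlier lemmas do this tacitly; I would spell out the smallness once and for all, then invoke it four times). Once that bookkeeping is in place, parts (2)--(5) are essentially tautologies in the language of Yoneda and local objects, so I expect no further substantive difficulties.
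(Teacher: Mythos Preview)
Your proposal is correct and follows the natural approach that the paper leaves implicit: the paper states this corollary without proof, prefacing it only with ``We collect and summarize this subsection in the following corollary,'' so the intended argument is precisely the one you give---read off each property from the corresponding localization via the adjunction identification $\Map_{\GgSpc}(\gYo(X),E)\simeq E(X)$. Your explicit Yoneda unpacking of (2)--(5) and the accessibility bookkeeping for (1) are exactly what the paper's sequence of lemmas (each invoking \cite[Prop.~5.5.4.15]{HTT}) is set up to deliver.
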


\begin{cor}\thlabel{UE.1}
	For every co-complete $\infty$-category $\mathbf{C}$ \ precomposition with $\mathrm{Y\tilde o}$ induces an equivalence between the category of colimit-preserving functors $\mathbf{Fun}^{colim}(\GgSpc,\mathbf{C})$ and the full subcategory of
	$\mathbf{Fun}(\GgBC,\mathbf{C})$ of functors which satisfy excision, preserve coarse equivalences, annihilate flasque spaces and are $u$-continuous.
\end{cor}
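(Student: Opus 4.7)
The plan is to obtain the universal property by stacking the four reflective localizations that define $\GgSpc$ inside $\PSh(\GgBC)$ on top of the universal property of the Yoneda embedding. Concretely, for any cocomplete $\infty$-category $\cC$, the Yoneda embedding $\mathrm{yo}\colon \GgBC\to\PSh(\GgBC)$ realizes presheaves as the free cocompletion, so precomposition with $\mathrm{yo}$ yields an equivalence
\[
\Fun^{colim}\bigl(\PSh(\GgBC),\cC\bigr)\overset{\simeq}{\longrightarrow}\Fun(\GgBC,\cC)
\]
(HTT 5.1.5.6). On the other hand, each of the functors $\tilde L$, $\tilde H^{\I}$, $\tilde{\mathrm{Fl}}$, $\tilde U$ is the left adjoint of a reflective subcategory inclusion, and by general theory (HTT 5.5.4.20) a reflective localization $L\colon \cD\to\cD'$ induces, via precomposition with the inclusion, an equivalence between $\Fun^{colim}(\cD',\cC)$ and the full subcategory of $\Fun^{colim}(\cD,\cC)$ consisting of those functors that invert all $L$-local equivalences.

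The strategy is then to apply this four times and identify, under the presheaf equivalence, which conditions on the associated functor $G:=F\circ\mathrm{yo}\colon\GgBC\to\cC$ correspond to inverting each of the four classes of local equivalences.
\begin{enumerate}
\item Inverting the $\tau_\chi$-local equivalences. By \thref{Gro.Top}, the generating local equivalences are (up to the empty cover) the maps $\mathrm{yo}(\cY)\sqcup_{\mathrm{yo}(Z\cap\cY)}\mathrm{yo}(Z)\to\mathrm{yo}(X)$ for complementary pairs $(Z,\cY)$. Since $F$ preserves colimits, inverting these is equivalent to $G$ sending each complementary pair to a pushout square in $\cC$, i.e. excision.
\item Inverting $\mathrm{yo}(\I\otimes X)\to\mathrm{yo}(X)$. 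Together with the fact that close morphisms factor through $\I\otimes X$, this is equivalent to $G$ sending close morphisms to equivalent morphisms, which in turn is equivalent to $G$ inverting coarse equivalences.
\item Inverting $\mathrm{yo}(\leer)\to\mathrm{yo}(X)$ for flasque $X$. Since $F$ preserves colimits, $F(\mathrm{yo}(\leer))$ is initial in $\cC$, so this amounts to $G(X)\simeq 0$ for flasque $X$.
\item Inverting $\colim_{U\in\cC_X^\Gamma}\mathrm{yo}(X_U)\to\mathrm{yo}(X)$. Again since $F$ preserves colimits, this is precisely $u$-continuity of $G$.
\end{enumerate}

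Composing these four equivalences with the Yoneda equivalence yields the desired equivalence between $\Fun^{colim}(\GgSpc,\cC)$ and the full subcategory of $\Fun(\GgBC,\cC)$ of functors satisfying excision, coarse invariance, flasque-vanishing, and $u$-continuity. The main conceptual point to handle carefully is step (2): one must argue that $\I$-invariance of the sheaf $F\circ\mathrm{yo}$ is really the correct translation of ``preserves coarse equivalences,'' using the characterization of closeness via maps out of $\I\otimes X$ together with the fact that a coarse equivalence admits an inverse up to closeness. Once this translation is verified, everything else is a formal iteration of the localization principle, and no new calculation is needed beyond what is already encoded in \thref{Yo.prop}.
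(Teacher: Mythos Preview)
Your proposal is correct and is exactly the argument the paper has in mind: the corollary is stated without proof, as it follows formally from the construction of $\GgSpc$ as an iterated reflective localization of $\PSh(\GgBC)$ together with the universal property of the Yoneda embedding (the paper invokes precisely \cite[Thm~5.1.5.6]{HTT} and \cite[Prop~5.5.4.20]{HTT} later, in the proof of the final theorem, for the same purpose). Your identification of the four localizing classes with the four axioms, including the observation that step~(2) requires the translation between $\I$-invariance and preservation of coarse equivalences via the closeness-through-$\I\otimes X$ characterization, matches the paper's setup verbatim.
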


Finally there is an excision statement for coarsely excisive pairs. For that we first need a notion of \enquote{nice} subsets of $X$: 
\begin{defi}
	A $\Gamma$-invariant subset 
	$A$ of $X$ is called \emph{nice} if for every $\Gamma$-invariant entourage $U$ in $\cC_X$ with $\Delta_X\subseteq U$, the inclusion $A\to U[A]$ is a coarse equivalence.
\end{defi}

Let $Y$ and $Z$ be $\Gamma$-invariant subsets of $X$.
\begin{defi}
	We say, that $(Y,Z)$ are \emph{coarsely excisive}, if \begin{enumerate}
		\item $X=Y\cup Z$.
		\item For all entourages $U$ in $\cC_X$ there exists an entourage $V$ in $\cC_X$ such that
		\[
		U[Y]\cap U[Z]\subseteq V[Y\cap Z].
		\]
		\item There exists a cofinal subset $\cC_X^{\mathrm{nice}}$ of $\cC_X^\Gamma$ such that for all $U$ in $\cC_X^{\mathrm{nice}}$ the set $U[Y]\cap Z$ is nice in $X$.
	\end{enumerate} 
\end{defi}

\begin{exam}\thlabel{coprod.excisive}
	Consider two generalized $\Gamma$-bornological coarse spaces $X$ and $Y$. We identify them with their respective image in the coproduct $Z:=X\amalg Y$. Then the pair $(X,Y)$
	is coarsely excisive for $Z$.
\end{exam}
\begin{proof}
	We claim that for an arbitrary entourage $U$ in $\cC_Z$ we have $U[X]\subseteq X$. The same argument of course shows $U[Y]\subseteq Y$. We therefore obtain
	$U[X]\cap U[Y]=\leer$ and thus conditions (2) und (3) are fulfilled by trivial reasons. Hence we only have to verify our claim above:\\
	Consider an entourage $U$ in $\cC_Z$. If $U\in\cC_X$, then clearly $U[X]\subseteq X$. On the other hand, if we have $U\in\cC_Y$, then we have $U[X]=\leer$, because $X\cap Y=\leer$.
	We know that $\cC_Z$ is generated by the entourages on $X$ and those on $Y$ and we have just verified the claim for the generators. It remains to check unions and compositions of
	those. For these the claim follows from the formulas $(U\cup V)[X]=U[X]\cup V[X]$ and $(U\circ V)[X]=U[V[X]]$.
\end{proof}

\begin{cor}\thlabel{huge.small.excise}
	The pair $(X_b,X_h)$ is coarsely excisive.
\end{cor}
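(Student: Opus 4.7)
The plan is to reduce the claim to \thref{coprod.excisive} via the canonical isomorphism $X \cong X_b \amalg X_h$ in \GgBC~established in \thref{iso.union}. Under this identification, the ordered pair $(X_b, X_h)$ inside $X$ corresponds exactly to the pair of canonical factors inside the coproduct, so \thref{coprod.excisive} yields coarse excisiveness of $(X_b, X_h)$ directly. This is the cleanest route.

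Alternatively, and in some sense equivalently, I would verify the three conditions by hand. The key observation, which is essentially the argument from the proof of \thref{thick.coarse}, is that for every entourage $U \in \cC_X$ one has $U[X_b] \subseteq X_b$ and $U[X_h] \subseteq X_h$. Indeed, if $(x, y) \in U$ with $y \in X_b$, then $\{x\} \subseteq U[\{y\}]$, and the latter is bounded by compatibility of $\cB_X$ with $\cC_X$; hence $x \in X_b$. The corresponding inclusion for $X_h$ follows by contraposition. In particular $U[X_b] \cap U[X_h] = \emptyset$ for every entourage $U$.

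Granted this, the three defining conditions are immediate. Condition (1) is just the set-theoretic decomposition $X = X_b \cup X_h$. Condition (2) holds with any $V$, for instance $V := \Delta_X$, since $\emptyset \subseteq V[X_b \cap X_h]$. For condition (3), one takes $\cC_X^{\mathrm{nice}} := \cC_X^\Gamma$: for every $U$ in this set, $U[X_b] \cap X_h = \emptyset$, and the empty subset is trivially nice (the inclusion $\emptyset \hookrightarrow U[\emptyset] = \emptyset$ is the identity and hence a coarse equivalence). There is no real obstacle here; the substantive content of the statement is already encoded in \thref{thick.coarse} and \thref{coprod.excisive}, and the present corollary is essentially a formal consequence.
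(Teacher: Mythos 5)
Your first paragraph reproduces the paper's proof exactly: the corollary is deduced immediately from the isomorphism $X \cong X_b \amalg X_h$ of \thref{iso.union} together with \thref{coprod.excisive}. The hands-on verification you add is also correct and essentially re-derives the key observation underlying \thref{coprod.excisive} (that thickenings do not mix the two pieces), but it is supplementary rather than a different route.
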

\begin{proof}
	This is an immediate consequence of \thref{iso.union} and \thref{coprod.excisive}
\end{proof}

\begin{lem}\thlabel{excision}
	If $(Y,Z)$ is a coarsely excisive pair on $X$, then the square
	\[
		\xymatrix{
			\gYo(Y\cap Z)\ar[r]\ar[d] & \gYo(Y)\ar[d]\\\gYo(Z)\ar[r] & \gYo(X)
		}
	\]
	is co-Cartesian in \GgSpc.
\end{lem}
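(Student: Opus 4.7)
The strategy is to reduce the statement to the complementary-pair excision already established in Corollary~\ref{Yo.prop}(2). Form the $\Gamma$-equivariant big family $\cY := (U[Y])_{U \in \cC_X^\Gamma}$ of $\Gamma$-invariant thickenings of $Y$. Condition (1) of coarse excisivity immediately gives $U[Y] \cup Z = X$ whenever $\Delta_X \subseteq U$, so $(Z, \cY)$ is a $\Gamma$-equivariant complementary pair on $X$. Applying Corollary~\ref{Yo.prop}(2) produces a cocartesian square
\[
\xymatrix{\gYo(Z \cap \cY) \ar[r]\ar[d] & \gYo(\cY) \ar[d]\\ \gYo(Z) \ar[r] & \gYo(X)}
\]
in \GgSpc. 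It therefore suffices to produce compatible equivalences $\gYo(Y \cap Z) \overset{\simeq}{\to} \gYo(Z \cap \cY)$ and $\gYo(Y) \overset{\simeq}{\to} \gYo(\cY)$, since pasting them onto the square above yields the one in the statement.

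Unfolding the definitions, $\gYo(\cY) \simeq \colim_U \gYo(U[Y])$ and $\gYo(Z \cap \cY) \simeq \colim_U \gYo(Z \cap U[Y])$, and by cofinality both colimits may be restricted to $U \in \cC_X^{\mathrm{nice}}$. For the equivalence on the $Y\cap Z$-side, condition (2) provides, for each such $U$, an entourage $V$ with
\[
Y \cap Z \ \subseteq\ Z \cap U[Y] \ \subseteq\ U[Y] \cap U[Z] \ \subseteq\ V[Y \cap Z].
\]
Combined with the coarse equivalences supplied by the niceness of $U[Y] \cap Z$ (condition (3)), which upgrade to $\gYo$-equivalences via Corollary~\ref{Yo.prop}(3), this interleaving collapses both systems and identifies the colimit with $\gYo(Y \cap Z)$. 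The equivalence $\gYo(Y) \overset{\simeq}{\to} \gYo(\cY)$ is obtained by an analogous zig-zag, exploiting niceness to upgrade each inclusion $Y \hookrightarrow U[Y]$ to an equivalence in \GgSpc.

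The \textbf{main obstacle} is the second equivalence $\gYo(Y) \simeq \gYo(\cY)$, because the niceness condition (3) is stated only for the intersection $U[Y] \cap Z$ and not for $Y$ itself. The resolution I envisage is to run the argument of the preceding paragraph in the symmetric role, working with the big family $(U[Z])_U$ and the complementary pair $(Y, (U[Z])_U)$, and to bootstrap a niceness statement for $Y$ itself from conditions (2) and (3) combined: the decomposition $Y = (Y\cap Z) \cup (Y\setminus Z)$ together with niceness of $U[Y]\cap Z$ should force the inclusion $Y \hookrightarrow U[Y]$ to be a coarse equivalence on the cofinal system $\cC_X^{\mathrm{nice}}$. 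Verifying this carefully, while keeping every zig-zag and cofinality argument $\Gamma$-equivariant, is the technical heart of the proof; once both equivalences are in place, transporting the cocartesian structure across them is formal.
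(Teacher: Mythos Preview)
The paper does not prove this lemma; it simply cites \cite[Corollary 4.13]{equ.Uli}. Your overall strategy---reduce to complementary-pair excision for $(Z,\cY)$ with $\cY=(U[Y])_U$---is indeed the one used in that reference, and you have correctly isolated the genuine difficulty.

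Your proposed resolution of that difficulty, however, does not work. The definition of a coarsely excisive pair is \emph{asymmetric} in $Y$ and $Z$: condition~(3) asserts niceness of $U[Y]\cap Z$ only, so you cannot swap roles and run the argument for the complementary pair $(Y,(U[Z])_U)$. The ``bootstrap'' claim---that niceness of $U[Y]\cap Z$ together with the decomposition $Y=(Y\cap Z)\cup(Y\setminus Z)$ forces $Y\hookrightarrow U[Y]$ to be an equivariant coarse equivalence---has no mechanism behind it; in the non-equivariant setting one can always write down a choice-function retraction $U[Y]\to Y$, and equivariance of such a retraction is exactly what condition~(3) is there to supply for the \emph{intersection}, not for $Y$. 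The argument in \cite{equ.Uli} does \emph{not} establish $\gYo(Y)\simeq\gYo(\cY)$ as a standalone step. Instead it applies complementary-pair excision a second time, on each $U[Y]$ for $U\in\cC_X^{\mathrm{nice}}$, with the pair consisting of $Y$ and the big family generated by $Z\cap U[Y]$; niceness of $Z\cap U[Y]$ collapses that inner big family, and pasting the resulting inner cocartesian squares with the outer one gives the desired square directly, without ever requiring $Y\hookrightarrow U[Y]$ to be a coarse equivalence. Your one-sided interleaving $Y\cap Z\subseteq Z\cap U[Y]\subseteq V[Y\cap Z]$ is also insufficient on its own (there is no inclusion back, since $V[Y\cap Z]$ need not lie in $Z$); this step too is absorbed by the nested-excision argument.
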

\begin{proof}
	See \cite[Corollary 4.13]{equ.Uli}.
\end{proof}

\begin{cor}\thlabel{Xadd}
	Consider two generalized $\Gamma$-bornological coarse spaces $Y$ and $Z$. Then by \thref{coprod.excisive} the pair $(Y,Z)$ is coarsely excisive the space $Y\amalg Z$. Hence
	\thref{excision} gives an equivalence
	\[
		\gYo(Y)\oplus\gYo(Z)\overset\simeq\longrightarrow\gYo(Y\amalg Z).
	\]
\end{cor}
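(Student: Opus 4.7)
The plan is to directly combine the two results cited in the statement itself. First I would invoke \thref{coprod.excisive} to conclude that, when $Y$ and $Z$ are identified with their canonical images in the coproduct $Y\amalg Z$ inside \GgBC, the pair $(Y,Z)$ is coarsely excisive on $Y\amalg Z$. Then \thref{excision} applies and produces a co-Cartesian square
\[
\xymatrix{
\gYo(Y\cap Z)\ar[r]\ar[d] & \gYo(Y)\ar[d]\\
\gYo(Z)\ar[r] & \gYo(Y\amalg Z)
}
\]
in \GgSpc.

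The next step is to compute $Y\cap Z$ inside $Y\amalg Z$. By construction of the coproduct in \GgBC~(see \thref{coprod}), the images of $Y$ and $Z$ are disjoint, so $Y\cap Z=\leer$. Therefore the upper-left corner of the square is $\gYo(\leer)$, and it suffices to show that this object is the zero object of \GgSpc, so that the pushout degenerates into the coproduct (which, under the $\oplus$ notation used in the statement, is the direct sum).

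To identify $\gYo(\leer)$, I would observe that $\leer$ is flasque in \GgBC: the identity $\mathrm{id}_\leer$ vacuously satisfies all three conditions of the flasqueness definition, since there are neither bounded nor unbounded points nor entourages beyond the empty ones. Hence by \thref{Yo.prop}(4), $\gYo(\leer)$ is final in \GgSpc. Combined with the fact that \GgSpc~is pointed (so that this terminal object coincides with the initial object, which comes from $\leer$ being initial in \GgBC~together with the fact that $\gYo$ is a composition of left adjoints applied to the Yoneda embedding), the pushout over $\gYo(\leer)$ is simply the coproduct of $\gYo(Y)$ and $\gYo(Z)$, yielding the claimed equivalence.

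The only real subtlety I expect is the pointedness remark in the last step: one must make sure that pushout over $\gYo(\leer)$ is genuinely the coproduct in the sense intended by the $\oplus$-notation. Everything else in the argument is an immediate bookkeeping combination of the two preceding results, so the proof will be essentially one line of text once the flasqueness of $\leer$ is recorded.
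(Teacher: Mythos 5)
Your proof is correct and takes the only natural route: apply \thref{coprod.excisive} and \thref{excision}, observe that $Y\cap Z=\leer$ in the coproduct, and conclude that the pushout over $\gYo(\leer)$ collapses to the coproduct. The paper itself states the corollary with the proof folded into the statement, omitting exactly the $\gYo(\leer)$-identification that you spell out, so your contribution is the right one.

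One step deserves tightening. You justify that $\gYo(\leer)$ is initial by saying it "comes from $\leer$ being initial in $\GgBC$ together with the fact that $\gYo$ is a composition of left adjoints applied to the Yoneda embedding." That reasoning does not go through as stated: the Yoneda embedding $\mathrm{yo}\colon\GgBC\to\PSh(\GgBC)$ is fully faithful but does not preserve colimits, so $\mathrm{yo}(\leer)$ is \emph{not} the initial presheaf, and the left adjoints downstream cannot create initiality out of a non-initial object by general nonsense alone. The step where initiality actually appears is the sheafification $\tilde L$, and the correct justification is the sheaf condition $E(\leer)\simeq\ast$ recorded in \thref{Gro.Top}: for any sheaf $E$ one has $\Map\big(\tilde L\,\mathrm{yo}(\leer),E\big)\simeq\Map_{\PSh}\big(\mathrm{yo}(\leer),E\big)\simeq E(\leer)\simeq\ast$, so $\tilde L\,\mathrm{yo}(\leer)$ is initial in $\Sh(\GgBC)$, and then the remaining localization functors $\tilde H^\I$, $\tilde{\mathrm{Fl}}$, $\tilde U$, being left adjoints, preserve the initial object. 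With that in hand, the flasqueness detour (showing $\gYo(\leer)$ is also terminal) is not strictly necessary for this corollary, since a pushout over an initial object is already the coproduct; it does, however, show that $\GgSpc$ is pointed, which makes the $\oplus$-notation sensible.
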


\subsection{Generalized motivic coarse spectra}
	As a final step in the construction we now stabilize generalized coarse motivic spaces and obtain generalized coarse motivic spectra. Like in
	the previous subsection, we just copy the definitions from \cite{Uli}, and
	reading the subsection without tildes gives the definitions of (ordinary) motivic coarse spectra.
	
	\begin{defi}
		We denote by $\GgSpc_*:=\GgSpc/\ast$ the pointed version of generalized motivic coarse spaces. Further we define the \emph{suspension functor}
		\[\Sigma\colon\GgSpc_*\to\GgSpc_*,\qquad X\mapsto\colim(\ast\leftarrow X\rightarrow\ast).\]
		The category of \emph{generalized motivic coarse spectra} is defined by inverting $\Sigma$:
		\[
			\GgSp:=\GgSpc[\Sigma^{-1}]:=\colim\{\GgSpc_*\overset\Sigma\rightarrow\GgSpc_*\overset\Sigma\rightarrow\dots\}
		\]
		where the colimit is taken in the category of presentable $\infty$-categories together with left adjoint functors.
	\end{defi}

By construction $\GgSp$ is a presentable stable $\infty$-category. 
Precomposing  the canonical functor $\GgSpc_*\to\GgSp$ with the functor, which adds a disjoint base point gives a functor
\[\Sigma_+^{\mathrm{mot}}\colon \GgSpc\longrightarrow\GgSpc_*\longrightarrow\GgSp.\]

\begin{lem}\thlabel{Sigma.adj}
	The functor $\Sigma_+^{\mathrm{mot}}$ fits into an adjunction
	\[
		\adjunction{\Sigma_+^{\mathrm{mot}}}{\GgSpc}{\GgSp}{\Omega^{\mathrm{mot}}}
	\]
	and has the following universal property: \ For every co-complete stable $\infty$-category $\mathbf{C}$, precomposing with $\Sigma_+^{\mathrm{mot}}$ provides an equivalence
	\[
		\mathbf{Fun}^{\colim}(\GgSpc,\mathbf{C})\simeq\mathbf{Fun}^{\colim}(\GgSp,\mathbf{C}).
	\]
\end{lem}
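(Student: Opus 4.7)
The plan is to mirror step by step the construction of motivic coarse spectra from \cite{Uli} and \cite{equ.Uli}, since the only substantive change between the classical and generalized setups is the domain category, and by this point in the paper we have shown that the relevant machinery (sheaves, descent, $u$-continuity, presentability of $\GgSpc$ via \thref{Yo.prop}) transfers without modification. The proof splits naturally into two parts: first produce the right adjoint $\Omega^{\mathrm{mot}}$, then deduce the universal property.

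For the adjunction, I would observe that $\Sigma_+^{\mathrm{mot}}$ is the composition of two colimit-preserving functors. Adding a disjoint basepoint $(-)_+\colon\GgSpc\to\GgSpc_*$ is the left adjoint of the forgetful functor $\GgSpc_*\to\GgSpc$ and hence preserves colimits. The canonical functor $\GgSpc_*\to\GgSp$ is a left adjoint by the very definition of $\GgSp$ as a colimit in the $\infty$-category of presentable $\infty$-categories with left-adjoint morphisms. Since $\GgSpc$ is presentable by \thref{Yo.prop} and $\GgSp$ is presentable by construction, the adjoint functor theorem produces the right adjoint $\Omega^{\mathrm{mot}}$.

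For the universal property, I would assemble, for any co-complete stable $\infty$-category $\mathbf{C}$, the following chain of equivalences:
\[
\mathbf{Fun}^{\colim}(\GgSp,\mathbf{C})\;\simeq\;\lim\bigl\{\cdots\to\mathbf{Fun}^{\colim}(\GgSpc_*,\mathbf{C})\xrightarrow{\Sigma^*}\mathbf{Fun}^{\colim}(\GgSpc_*,\mathbf{C})\bigr\}\;\simeq\;\mathbf{Fun}^{\colim}(\GgSpc_*,\mathbf{C})\;\simeq\;\mathbf{Fun}^{\colim}(\GgSpc,\mathbf{C}).
\]
The first equivalence is the defining universal property of $\GgSp$ as a colimit in $\mathrm{Pr}^L$. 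The second uses that stability of $\mathbf{C}$ makes $\Sigma$ invertible on $\mathbf{C}$, so that precomposition with $\Sigma$ on the functor category becomes an equivalence (any colimit-preserving $F$ automatically satisfies $F\circ\Sigma\simeq\Sigma\circ F$, so the limiting diagram is essentially constant with equivalent transition maps). The third equivalence is the standard comparison between colimit-preserving functors out of a pointed presentable $\infty$-category and out of its unpointed version, implemented by precomposition with $(-)_+$, and valid since $\mathbf{C}$ is pointed. Composing, the resulting equivalence is precomposition with $\Sigma_+^{\mathrm{mot}}$.

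The main obstacle is the bookkeeping in the second equivalence: one has to verify that the limit of a constant tower (up to equivalence) recovers a single copy of $\mathbf{Fun}^{\colim}(\GgSpc_*,\mathbf{C})$ and that, after composition, the resulting map is really implemented by precomposition with $\Sigma_+^{\mathrm{mot}}$ rather than some twist. However, this is a purely formal argument about stabilization of presentable $\infty$-categories and is identical to the one carried out in \cite[\S4]{Uli} and \cite[\S4]{equ.Uli}, so I expect to be able to reduce the verification to citing those references rather than redoing the calculation.
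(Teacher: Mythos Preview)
Your proposal is correct and aligns with the paper's approach: the paper's proof is simply a one-line citation ``See \cite[31]{Uli}'', and your sketch unpacks precisely the formal stabilization argument that reference contains, ultimately concluding (as the paper does) that one may defer to \cite{Uli} and \cite{equ.Uli}. Nothing further is needed.
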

\begin{proof}
	See \cite[31]{Uli}.
\end{proof}

Every $\Gamma$-equivariant generalized bornological coarse space $X$ represents a generalized coarse motivic spectrum via the composition of functors
\[\gYo^s\colon\GgBC\overset{\gYo}{\longrightarrow}\GgSpc\overset{\Sigma^{\mathrm{mot}}_+}\longrightarrow\GgSp.\]

For a stable $\infty$-category $\mathbf C$ any morphism $f\colon E\to F$ in $\mathbf C$ extends functorially to a cofiber sequence
\[
	\dots\to\Sigma^{-1}E\to\Sigma^{-1}F\to\Sigma^{-1}\operatorname{Cofib}(f)\to E\to F\to\operatorname{Cofib}(f)\to\Sigma E\to\Sigma F\to\dots.\]
We denote $\operatorname{Fib}(f):=\Sigma^{-1}\operatorname{Cofib}(f)$.

For a big family $\cY=(Y_i)_{i\in I}$ on a $\Gamma$-equivariant generalized bornological coarse space $X$ we define the generalized motivic coarse spectrum
\[\gYo^s(\cY):=\colim_{i\in I}\gYo^s(Y_i).\]
The induced natural morphism $\gYo^s(\cY)\to\gYo^s(X)$ gives rise to 
\[
	(X,\cY):=\operatorname{Cofib}\left(\gYo^s(\cY)\to\gYo^s(X)\right).
\]

Since $\Sigma_+^{\mathrm{mot}}$ is left adjoint (cf. \thref{Sigma.adj}), an immediate consequence of the properties of $\gYo$ like in \thref{Yo.prop} or \thref{excision} is the following:

\begin{cor}
	\begin{enumerate}
		\item We have a cofibre sequence
		\[
			\dots\to\gYo^s(\cY)\to\gYo^s(X)\to(X,\cY)\to\Sigma\gYo^s(\cY)\to\dots
		\]
		\item For a complementary pair $(Z,\cY)$ on $X$, the natural morphism \[(Z,Z\cap\cY)\longrightarrow(X,\cY)\] is an equivalence.
		\item If $X\to Y$ is an equivalence in $\GgBC$, then $\gYo^s(X)\to\gYo^s(Y)$ is an equivalence in $\GgSp$.
		\item If $X$ is flasque, then $\gYo^s(X)$ is a zero-object in $\GgSp$.
		\item We have an equivalence \ $\gYo^s(X)\simeq\colim_{U\in\cC_X^\Gamma}\gYo^s(X_U)$.
		\item If $(Y,Z)$ is a coarsely excisive pair on $X$, then the square
		\[\xymatrix{
			\gYo^s(Y\cap Z)\ar[r]\ar[d] & \gYo^s(Y)\ar[d]\\ \gYo^s(Z)\ar[r] & \gYo^s(X)
		}\]
			is co-cartesian in $\GgSp$.
	\end{enumerate}
\end{cor}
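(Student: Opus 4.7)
The proof rests on two observations. First, by \thref{Sigma.adj} the functor $\Sigma_+^{\mathrm{mot}}$ is a left adjoint, hence preserves all (small) colimits; since $\gYo^s = \Sigma_+^{\mathrm{mot}} \circ \gYo$, the properties of $\gYo$ recorded in \thref{Yo.prop} and \thref{excision} transport directly to $\gYo^s$. Second, the target $\GgSp$ is stable, so every cofibre extends automatically to a bi-infinite cofibre/fibre sequence, and parallel arrows in a cocartesian square have canonically equivalent cofibres.

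Items (1), (3), (4), (5), (6) are then essentially formal. For (1), the sequence is the extension of the defining cofibre $(X,\cY) := \operatorname{Cofib}(\gYo^s(\cY) \to \gYo^s(X))$ into a bi-infinite cofibre sequence in the stable category $\GgSp$. For (3), coarse equivalences become equivalences in $\GgSpc$ by \thref{Yo.prop}(3), and functoriality of $\Sigma_+^{\mathrm{mot}}$ carries this over. For (5), commuting $\Sigma_+^{\mathrm{mot}}$ with the colimit in the equivalence of \thref{Yo.prop}(5) yields $\gYo^s(X) \simeq \colim_{U \in \cC_X^\Gamma} \gYo^s(X_U)$. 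For (6), applying $\Sigma_+^{\mathrm{mot}}$ to the cocartesian square of \thref{excision} preserves the pushout. For (4), \thref{Yo.prop}(4) identifies $\gYo(X)$ with the final (hence zero, after pointing) object of $\GgSpc$; the left adjoint $\Sigma_+^{\mathrm{mot}}$ then sends it to the initial object of $\GgSp$, which coincides with the zero object by stability.

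For (2), I apply $\Sigma_+^{\mathrm{mot}}$ to the cocartesian square from \thref{Yo.prop}(2), using that it commutes with the colimits defining $\gYo^s(\cY)$ and $\gYo^s(Z \cap \cY)$, to obtain a cocartesian square
\[
\xymatrix{
\gYo^s(Z\cap\cY) \ar[d] \ar[r] & \gYo^s(\cY) \ar[d] \\
\gYo^s(Z) \ar[r] & \gYo^s(X)
}
\]
in $\GgSp$. By stability, the induced map on vertical cofibres is an equivalence, and these cofibres are precisely $(Z, Z \cap \cY)$ and $(X, \cY)$ by definition, which yields the claim. The only mildly delicate point throughout is (4), where one has to trace carefully how the terminal object of $\GgSpc$ passes through the pointing $\GgSpc \to \GgSpc_*$ and the subsequent stabilization; everything else is pure colimit-preservation bookkeeping.
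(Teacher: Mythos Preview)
Your proposal is correct and follows exactly the approach the paper indicates: the paper does not give a detailed proof but simply records that, since $\Sigma_+^{\mathrm{mot}}$ is a left adjoint (\thref{Sigma.adj}), the corollary is an immediate consequence of the properties of $\gYo$ in \thref{Yo.prop} and \thref{excision}. Your write-up just makes this explicit, including the standard use of stability for items (1), (2), and (4).
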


\begin{cor}
	Since $(X_b,X_h)$ is coarsely excisive by \thref{huge.small.excise} and $X_h$ is flasque by \thref{huge.is.flasque}, we have 
	$\gYo^s(X)\simeq\gYo^s(X_b)\oplus\gYo^s(X_h)\simeq\gYo^s(X_b)$.
\end{cor}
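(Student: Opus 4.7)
The plan is to combine the two hints given in the statement (coarse excisiveness of $(X_b,X_h)$ and flasqueness of $X_h$) with the previous corollary on coarsely excisive pairs and apply the standard reformulation of a pushout square in a stable $\infty$-category.

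First I would invoke part (6) of the preceding corollary, which says that for a coarsely excisive pair $(Y,Z)$ the square
\[
\xymatrix{
\gYo^s(Y\cap Z)\ar[r]\ar[d] & \gYo^s(Y)\ar[d]\\
\gYo^s(Z)\ar[r] & \gYo^s(X)
}
\]
is co-Cartesian in $\GgSp$. Specializing to $Y:=X_b$ and $Z:=X_h$ (which is coarsely excisive by \thref{huge.small.excise}), and observing that by construction $X_b\cap X_h=\leer$, I get a pushout
\[
\xymatrix{
\gYo^s(\leer)\ar[r]\ar[d] & \gYo^s(X_b)\ar[d]\\
\gYo^s(X_h)\ar[r] & \gYo^s(X)
}
\]
in the stable $\infty$-category $\GgSp$.

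Next I would note that $\leer$ is flasque (the identity trivially satisfies all three conditions in the definition of flasqueness), so part (4) of the same corollary yields $\gYo^s(\leer)\simeq 0$. In a stable $\infty$-category, a pushout under the zero object is a coproduct, hence the square above provides the equivalence
\[
\gYo^s(X_b)\oplus\gYo^s(X_h)\overset{\simeq}{\longrightarrow}\gYo^s(X).
\]

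Finally, applying part (4) of the corollary once more to $X_h$, which is flasque by \thref{huge.is.flasque}, gives $\gYo^s(X_h)\simeq 0$ in $\GgSp$, and therefore the composite equivalence $\gYo^s(X)\simeq\gYo^s(X_b)\oplus\gYo^s(X_h)\simeq\gYo^s(X_b)$ as claimed. There is no real obstacle here; the only point that demands a moment's care is observing that $X_b\cap X_h=\leer$ and that $\gYo^s(\leer)$ vanishes, so that the pushout really collapses to a direct sum in the stable setting.
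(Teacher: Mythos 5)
Your proof is correct and takes the same route the paper's inline justification intends: apply item (6) of the preceding corollary to the coarsely excisive pair $(X_b,X_h)$, use $X_b\cap X_h=\leer$ together with the vanishing of $\gYo^s(\leer)$ to turn the pushout into a direct sum in the stable category, and then use item (4) and the flasqueness of $X_h$ to kill the second summand. The only thing you added is the explicit check that $\gYo^s(\leer)\simeq 0$, which the paper leaves implicit (and which also follows from \thref{Xadd} specialized to $Y=Z=\leer$); this is a welcome, not superfluous, precision.
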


\subsection{Equivalence of \texorpdfstring{\GgSp}{GgSp}~ and \texorpdfstring{\GSp}{GSp}}
	In this subsection we show that the inclusion $\iota\colon\GBC\to\GgBC$ induces an equivalence of categories $\GgSp\overset\simeq\to\GSp$. 
	
	Let $\Sp$ denote the stable $\infty$-category of spectra (i.e. spectrum objects in $\Spc_\ast$). For any category $\mathbf{C}$ we denote by
	$\PShSp(\mathbf{C}):=\mathbf{Fun}(\mathbf{C}^{op},\Sp)$ the $\infty$-category of spectrum-valued presheaves on $\mathbf{C}$. This category is equivalent to
	the subcategory of spectrum objects in $\PSh(\mathbf{C})$ by \cite[Rmk 1.4.2.9]{HA}.
	
	\begin{defi}\thlabel{Sh.Cond}
		We denote by $\ShSp(\GgBC)$ 
		the full subcategory of  those presheaves $E$ in $\PShSp(\GgBC)$ for which the following natural morphisms are equivalences:
		\begin{enumerate}
			\item $E(X\amalg Y)\longrightarrow E(X)\oplus E(Y)$ \ for all $X,Y$ in $\GgBC$,
			\item $E(X)\longrightarrow0$ \ for all $X$ in $\GgBC$ with $X_b\simeq\emptyset$.
		\end{enumerate}
		Likewise we denote by $\ShSp(\GBC)$ the full subcategory of those presheaves $E$ in $\PShSp(\GBC)$ for which all morphisms in (1) and (2) are equivalences.\\
		We denote by $\cL$ and $\tilde\cL$ the respective localization functors 
		\begin{align*}
			\cL\colon\PShSp(\GBC)&\longrightarrow\ShSp(\GBC)\\
			\tilde\cL\colon\PShSp(\GgBC)&\longrightarrow\ShSp(\GgBC).
		\end{align*}
	\end{defi}

	\begin{thm}
		The inclusion $\iota\colon\GBC\to\GgBC$ induces an equivalence of $\infty$-categories \[
			\iota^*\colon\ShSp(\GgBC)\overset\simeq\longrightarrow\ShSp(\GBC).
		\]
	\end{thm}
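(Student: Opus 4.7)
The plan is to construct an explicit quasi-inverse $\Phi\colon\ShSp(\GBC)\to\ShSp(\GgBC)$ to $\iota^*$. The guiding principle is that any sheaf $E$ in $\ShSp(\GgBC)$ is already determined by its restriction to $\GBC$: by \thref{iso.union} every $X\in\GgBC$ splits as $X\cong X_b\amalg X_h$, so conditions (1) and (2) of \thref{Sh.Cond} combine (the latter applied to $X_h$, for which $(X_h)_b=\emptyset$) to give the natural equivalence
\[
E(X)\simeq E(X_b)\oplus E(X_h)\simeq E(X_b).
\]

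Motivated by this, I set $\Phi(F)(X):=F(X_b)$ on objects. To define $\Phi(F)$ on a morphism $f\colon X\to Y$ of $\GgBC$, I consider $A_f:=f^{-1}(Y_b)$. By \thref{map.loc.small} one has $A_f\subseteq X_b$, and essentially the same argument shows that $A_f$ is saturated under coarse equivalence in $X$: if $(x,x')\in U\in\cC_X$ and $x\in A_f$, then $f(x')\in(f\times f)(U)[\{f(x)\}]\in\cB_Y$, hence $f(x')\in Y_b$. This yields a coproduct decomposition $X_b\cong A_f\amalg(X_b\setminus A_f)$ in $\GBC$, and applying condition (1) of \thref{Sh.Cond} provides $F(X_b)\simeq F(A_f)\oplus F(X_b\setminus A_f)$. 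I then define
\[
\Phi(F)(f)\colon F(Y_b)\xrightarrow{F(f|_{A_f})}F(A_f)\hookrightarrow F(A_f)\oplus F(X_b\setminus A_f)\simeq F(X_b).
\]

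The remaining checks are: (a) functoriality of $\Phi(F)$ on composition, (b) that $\Phi(F)$ lies in $\ShSp(\GgBC)$, and (c) the two equivalences $\iota^*\circ\Phi\simeq\mathrm{id}$ and $\Phi\circ\iota^*\simeq\mathrm{id}$. Steps (b) and (c) are routine: for (b), $(X\amalg Y)_b=X_b\amalg Y_b$ yields additivity, while $X_b=\emptyset$ forces $\Phi(F)(X)\simeq F(\emptyset)\simeq 0$; for (c), $A_b=A$ for $A\in\GBC$ makes $\iota^*\circ\Phi$ the identity on the nose, and $\Phi\circ\iota^*\simeq\mathrm{id}$ is precisely the opening observation, with naturality in morphisms obtained by pushing $f\colon X\to Y$ through the decompositions $X\cong X_b\amalg X_h$, $Y\cong Y_b\amalg Y_h$ and using that $f(X_h)\subseteq Y_h$.

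The main obstacle is (a). Given composable morphisms $g\colon Y\to Z$ and $f\colon X\to Y$, set $C:=g^{-1}(Z_b)$, $A_f:=f^{-1}(Y_b)$, and $D:=f^{-1}(C)=(gf)^{-1}(Z_b)\subseteq A_f$. The comparison of $\Phi(F)(gf)$ with $\Phi(F)(f)\circ\Phi(F)(g)$ reduces to observing that $D$ is also coarse-equivalence saturated (so $A_f\cong D\amalg(A_f\setminus D)$ in $\GBC$), and that $f|_{A_f}$ decomposes along the coproducts $A_f=D\amalg(A_f\setminus D)$ and $Y_b=C\amalg(Y_b\setminus C)$ as the coproduct of $f|_D\colon D\to C$ and $f|_{A_f\setminus D}\colon A_f\setminus D\to Y_b\setminus C$. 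Once this decomposition bookkeeping is in place, the equality of the two composites is a formal manipulation with the biproduct structure of $F$-values in the stable target $\Sp$.
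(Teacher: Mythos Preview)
Your guiding observation---that every $E\in\ShSp(\GgBC)$ satisfies $E(X)\simeq E(X_b)$, so the inverse to $\iota^*$ ought to send $F$ to $X\mapsto F(X_b)$---is exactly right, and the paper's proof ultimately computes the same thing. Your set-theoretic claims (that $A_f=f^{-1}(Y_b)$ is a union of coarse components of $X_b$, and the resulting coproduct decompositions in $\GBC$) are also correct.

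The gap is that your construction of $\Phi(F)$ is not a construction of a functor into the $\infty$-category $\Sp$. You specify values on objects and on individual morphisms and then argue that $\Phi(F)(gf)$ and $\Phi(F)(f)\circ\Phi(F)(g)$ agree up to the biproduct equivalences coming from condition~(1) of \thref{Sh.Cond}. In an ordinary category that would suffice, but here those splittings $F(X_b)\simeq F(A_f)\oplus F(X_b\setminus A_f)$ are non-identity data, and you must supply coherent homotopies for \emph{all} higher simplices of the nerve of $\GgBC$, not merely verify pairwise compatibility. The same coherence problem recurs when you try to make $\Phi$ natural in $F$, and yet again for the natural equivalences in step~(c). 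Your phrase ``a formal manipulation with the biproduct structure'' hides exactly the infinite tower of coherences that constitutes the whole difficulty.

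The paper avoids this by never assembling the inverse by hand. It sets $\cF:=\tilde{\cL}\circ\iota_!\circ j$, a composite of the full inclusion $j\colon\ShSp(\GBC)\hookrightarrow\PShSp(\GBC)$, left Kan extension along $\iota$, and the localization $\tilde{\cL}$---each a well-defined $\infty$-functor with all coherences built in. The substantive work then becomes showing that localization commutes with $\iota^*$ (Claim~2 in the proof), for which the paper builds an explicit model of $\tilde{\cL}$ indexed by an auxiliary category of spaces equipped with finite decompositions into coarse components; unwinding that model on objects recovers precisely your $F(X_b)$. So your intuition is vindicated, but to turn it into a proof you would have to package $\Phi$ as such a composite of known $\infty$-functors (or invoke a rectification theorem) rather than assemble it pointwise.
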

	\begin{proof}
		Consider the following diagram
		\[
			\xymatrix{\GBC\ar[d]_\iota\ar[r]^(.4){\mathrm{yo}} & \PShSp(\GBC)\ar@<-3mm>[d]_{\iota_!}\ar@{}[d]|\dashv \ar@<1.8mm>[r]^\cL \ar@{}[r]|(.525){\scriptscriptstyle\bot} & 
				\ShSp(\GBC)\ar@<-3mm>@{.>}[d]_{\cF}\ar@<1.8mm>[l]^(.475)j\\
			\GgBC \ar[r]_(.4){\mathrm{yo}} & \PShSp(\GgBC)\ar@<-3mm>[u]_{\iota^*}\ar[r]_{\tilde\cL} & \ShSp(\GgBC)\ar@<-3mm>@{.>}[u]_{\iota^*}.}
		\]
		It is clear, that $\iota^*$ restricts to a functor $\ShSp(\GgBC)\to
		\ShSp(\GBC)$. Further we define $\cF$ as the composition
		$\cF:=\tilde{\cL}\circ\iota_!\circ j$, where $j$ is the inclusion
		of the full subcategory $\ShSp(\GBC)\hookrightarrow\PShSp(\GBC)$.\\[.5cm]
		Claim 1: $\iota^*$ detects equivalences. \\In fact, let $E\to F$ be
		a morphism in $\ShSp(\GgBC)$ such that $\iota^*E\to\iota^*F$ is an 
		equivalence. Then for all $X$ in $\GgBC$, \thref{iso.union} provides an isomorphism
		$X\cong X_b\amalg X_h$. By the $\tau$-sheaf conditions, claim 1 follows as:
		\begin{align*}
			E(X)&\overset\simeq\to E(X_b)\oplus E(X_h)\overset\simeq\to E(X_b)
			\simeq E(\iota(X_b))\simeq \iota^*E(X_b)\overset\simeq\to\iota^*F(X_b)\\
			&\simeq F(X_b)\simeq F(X_b\amalg X_h)
			\simeq F(X).
		\end{align*}\mbox{}\\[-.1cm]
		Claim 2: $\cL\iota^*\simeq\iota^*\tilde\cL$. We show this claim later.\\[.5cm]
		Claim 3: $\iota^*\cF\simeq\mathrm{id}$.\\
		In fact, since $\iota$ is fully faithful, the left Kan extension $\iota_!$
		is also fully faithful, hence $\iota^*\iota_!\simeq\mathrm{id}$.
		Using this and claim 2 we obtain:
		\[\iota^*\cF\simeq \iota^*\tilde\cL\iota_!j\simeq\cL\iota^*\iota_!j\simeq\cL j
		\simeq\mathrm{id}.
		\]\mbox{}\\
		Claim 4: $\cF\iota^*\simeq\mathrm{id}$.\\
		By claim 3, we have $\iota^*\cF\simeq\mathrm{id}$. In particular we get $\iota^*\cF\iota^*\simeq\iota^*$. But   $\iota^*$ detects equivalences by claim 1, thus
		 $\cF\iota^*\simeq\mathrm{id}$.\\[.5cm]
		It therefore remains to verify claim 2. The strategy will be to define two concrete models for the localization functors $\cL$ and $\tilde\cL$ and verify claim 2 by an explicit calculation.\\
		We start with constructing a model for $\tilde\cL$:\\
		To shorten notation, let $\cC:=\GBC$ and $\cD:=\GgBC$.
		We define a category $\cE$ with objects $(X,\sS)$, where $X$ is in $\cD$ 
		and $\sS$ is a decomposition of $X$ into finitely many coarse components, which is fine
			enough that each component is contained in $X_b$ or in $X_h$ (cf. \thref{thick.coarse}). 
			A morphism $(X,\sS)\to(X',\sS')$ is a morphism $f$ in $\mathrm{Mor}_\cD(X,X')$ such that $f^*\sS'\subseteq\sS$.
			Moreover, we define another category $\cE'$ with objects $(X,\sS,S)$, where $(X,\sS)$ is an object in $\cE$ and $S$ is in $\sS$ with $S\subseteq X_b$.\\
			We have forgetful functors 
			\begin{align*}
				\alpha\colon\cE'&\longrightarrow\cE,\qquad (X,\sS,S)\longmapsto(X,\sS)\\
				\beta\colon\cE&\longrightarrow\cD,\qquad \phantom{\mbox{$,S$}}(X,\sS)\longmapsto X.
			\end{align*}
			These forgetful functors induce restrictions
			\[
				\alpha^*\colon\PShSp(\cE)\to\PShSp(\cE'),\quad \beta^*\colon\PShSp(\cD)\to\PShSp(\cE).
			\]
			We denote by $\alpha_*$ the right Kan extension of $\alpha^*$ and by $\beta_!$ the left Kan extension of $\beta^*$. Finally we define a functor
			$F\colon\PShSp(\cD)\to\PShSp(\cE')$ by
			\[F(E)\big((X,\sS,S)\big):=E(S).\]
			For $(X,\sS,S)$ in $\cE'$ the inclusion $S\hookrightarrow X$ gives a natural transformation $\alpha^*\beta^*\to F$, which corresponds via the adjunction
			$\alpha^*\dashv\alpha_*$ to a natural transformation $\beta^*\to\alpha_*F$.\\
			Applying $\beta_!$ to this natural transformation and using $\beta_!\beta^*\simeq\mathrm{id}$\footnote{
			$\beta_!\beta^*(E)(X)=\colim_{\beta(X,\sS)\to X}\beta^*(E)((X,\sS))=\colim E(X)\simeq E(X)$, because $(X,\sS)$ is cofinal in the collection $\beta(Y,\sS)\to X$
			}, we obtain a natural transformation
			\[\mathrm{id}\to\beta_!\alpha_*F=:\tilde{\mathscr L}.\]
			We claim, that $\tilde{\mathscr L}$ models the localization $\tilde\cL$.\\
			To show this, we must check that $\tilde{\mathscr L}$ has values in $\ShSp(\cD)$ and that it is 
			idempotent or -- equivalently -- that $E\to\tilde{\mathscr L}E$ is an equivalence for all $\tau$-sheaves $E$. First, consider a presheaf $E$ in
			$\PShSp(\cD)$ and let $X,Y$ in $\cD$. Then for $Z:=X\amalg Y$ we calculate
			\[
				\tilde{\mathscr L}(E)(Z)=\colim_{\beta(Z,\sS)\to Z}\alpha_*F_1(E)((Z,\sS))=\colim_{\beta(Z,\sS)\to Z}\ \lim_{(Z,\sS)\to\alpha(Z,\sS,S)}\underbrace{F(E)(Z,\sS,S)}_{=E(S)}.
			\]
			We can choose $\sS$ fine enough %\todo{besser erklären} 
			such that each $S$ belongs either to $X$ or to $Y$. Furthermore, $\sS$ was finite, hence the limit turns into a product. Hence we
			arrive at 
			\begin{align*}
				\tilde{\mathscr L}(E)(Z)&\simeq \colim_{\beta(Z,\sS)\to Z}\left(\prod_{\substack{S\in\sS\\S\subseteq X}}E(S)\times\prod_{\substack{S\in\sS\\S\subseteq Y}}E(S)\right)\\&\simeq
				\colim_{\beta(Z,\sS)\to Z}\prod_{\substack{S\in\sS\\S\subseteq X}}E(S)\ \times \colim_{\beta(Z,\sS)\to Z}\prod_{\substack{S\in\sS\\S\subseteq Y}}E(S)\\
				&\simeq\tilde{\mathscr L}(E)(X)\times\tilde{\mathscr L}(E)(Y).
			\end{align*}
			For the second equivalence we used that the colimit is filtered, hence commutes with finite products.\\
			The second sheaf condition is immediate: By definition  we require for any $S$ in $\sS$ 
			to be a subset of $X_b$. Therefore, if $X_b=\leer$, we take the colimit over the empty limit and thus
			obtain $0$. So we see, that $\tilde{\mathscr L}(E)$ is indeed
			a sheaf in $\ShSp(\cD)$.\\[.3cm]
			Next consider a sheaf $E$ in $\ShSp(\cD)$ and calculate
			\[
				\tilde{\mathscr L}(E)(X)\simeq\colim_{(X,\sS)}\lim_{(X,\sS,S)}E(S)\simeq\colim_{(X,\sS)}\prod_{S\in\sS}E(S)\simeq\colim_{(X,\sS)}E(X)\simeq E(X),
			\]
			where the third equivalence is the sheaf condition on $E$. This shows, that $\tilde{\mathscr L}$ is a model for $\tilde\cL$.\\[.5cm]
			As a second step we define $\mathscr L:=\iota^*\tilde{\mathscr L}\iota_!\colon\PShSp(\cC)\to\ShSp(\cC)$. We claim that $\mathscr L$ models
			the localization $\cL$ and further we claim, that localization commutes with
			restriction $\iota^*$, more precisely: 
			$\mathscr L\iota^*\simeq\iota^*\tilde{\mathscr L}$, which would complete the proof of claim 2.\\[.3cm]
			The second assertion follows from explicit calculation: Let $E$ be in $\PShSp(\cD)$ and let $X$ in $\cC$, then 
			\begin{align*}
				\mathscr L\iota^*(E)(X)&\simeq\tilde{\mathscr L}\iota_!\iota^*(E)(\iota(X))\simeq\tilde{\mathscr L}\iota_!\iota^*(E)(X)\\
				&\simeq\colim_{\beta(X,\sS)\to X}\ \lim_{(X,\sS)\to\alpha(X,\sS,S)}\ \colim_{\iota(Y)\to S}\iota^*(E)(Y)\\
				&\simeq \colim_{\beta(X,\sS)\to X}\ \lim_{(X,\sS)\to\alpha(X,\sS,S)} E(S) \\& \simeq\tilde{\mathscr L}(E)(X)=\iota^*\tilde{\mathscr L}(E)(X).
			\end{align*}
			For the equivalence in the third line we used that any $S$ in $\sS$ is contained in $X_b$, hence is cofinal in the collection $\iota(Y)\to S$.\\
			It remains to show, that $\mathscr L$ models the localization $\cL$, i.e. we have to check that it is idempotent. We already know that 
			$\iota^*\iota_!\simeq\mathrm{id}$ and that $\tilde{\mathscr L}$ is idempotent, hence
			\[
				\mathscr L\mathscr L\simeq \mathscr L\mathscr L\iota^*\iota_!\simeq \iota^*\tilde{\mathscr L}
				\tilde{\mathscr L}\iota_!\simeq\iota^*\tilde{\mathscr L}\iota_!\simeq \mathscr L\iota^*\iota_!\simeq \mathscr L.
			\]
	\end{proof}

To arrive at the main theorem of this section, we localize both sheaf categories in the theorem above
at the collection of morphisms below and then
compare the universal properties.

Let $\cC$ be one of the categories $\GBC$ or $\GgBC$. 

\begin{defi}\thlabel{mot.attr}
	Let $\ShSpMot(\cC)$ denote the full subcategory of those sheaves 
	$E$ in $\ShSp(\cC)$ such that the following induced morphisms are
	equivalences: 
	\begin{enumerate}
		\item $E(X)\to0$ \quad for all flasque spaces $X$ in $\cC$.
		\item $E(X,\cY)\to E(Z,Z\cap\cY)$ \quad for all 
		spaces $X$ in $\cC$ and all $\Gamma$-equivariant complementary pairs
		$(Z,\cY)$ on $X$.
		\item $E(X)\to E(\mathbb{I}\otimes X)$ \quad for all $X$ in
		$\cC$ (where $\mathbb{I}=\{0,1\}_{\max,\max}$)
		\item $E(X)\to\lim_{U\in\cC_X}E(X_U)$ \quad for all spaces $X$ in
		$\cC$.
	\end{enumerate}
\end{defi}

\begin{thm}
	We have an equivalence of categories $\ShSpMot(\GBC)\simeq\GSp$ and
	$\ShSpMot(\GgBC)\simeq\GgSp$. In particular we obtain an equivalence
	of categories $\GgSp\overset\simeq\to\GSp$.
\end{thm}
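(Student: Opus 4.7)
The plan is two-fold: first, identify $\ShSpMot(\cC)$ with $\cC\mathbf{Sp}$ for $\cC$ being either $\GBC$ or $\GgBC$ via universal properties, then restrict the equivalence $\iota^{\ast}\colon\ShSp(\GgBC)\simeq\ShSp(\GBC)$ from the previous theorem to the respective motivic subcategories.

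\textbf{Identifying $\ShSpMot(\cC)$ with the motivic spectra.} By the universal property of $\PShSp(\cC)$ as the free cocomplete stable $\infty$-category on $\cC$, colimit-preserving functors out of $\PShSp(\cC)$ into a presentable stable $\infty$-category $\cD$ correspond to functors $F\colon\cC\to\cD$. Passing to the reflective localization $\ShSpMot(\cC)$ amounts to imposing that $F$ invert the localizing morphisms. Under this dictionary, the four conditions from \thref{mot.attr} correspond precisely to the four axioms of a coarse homology theory (\thref{def.class.h.t}), with the excision axiom expressed as a co-Cartesian square, equivalent in the stable setting to the cofiber-type formulation of \thref{mot.attr}(2). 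The two conditions built into $\ShSp$ are automatic consequences: coproduct additivity follows from excision applied to the coarsely excisive pair $(X,Y)$ on $X\amalg Y$ (cf.\ \thref{coprod.excisive}) together with $F(\leer)\simeq0$ (the empty space being trivially flasque), and vanishing on $X$ with $X_b=\leer$ follows because then $X=X_h$ is flasque by \thref{huge.is.flasque}. Since, by combining \thref{UE.1} and \thref{Sigma.adj}, $\cC\mathbf{Sp}$ corepresents the same functor, Yoneda yields $\ShSpMot(\cC)\simeq\cC\mathbf{Sp}$.

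\textbf{Restricting $\iota^{\ast}$ to the motivic subcategories.} For $E\in\ShSpMot(\GgBC)$, the presheaf $\iota^{\ast}E$ trivially satisfies the four motivic conditions on $\GBC$, since flasques, complementary pairs, the cylinder $\mathbb{I}\otimes X$ and the entourage restrictions $X_U$ are the same objects whether viewed in $\GBC$ or in $\GgBC$. Conversely, to check that $\cF(E):=\tilde\cL\iota_{!}jE$ is motivic for $E\in\ShSpMot(\GBC)$, I use the explicit formula that, for any $X\in\GgBC$,
\[
\cF(E)(X)\simeq\cF(E)(X_b)\oplus\cF(E)(X_h)\simeq E(X_b),
\]
obtained from \thref{iso.union}, the two $\ShSp$-conditions (the second giving $\cF(E)(X_h)\simeq0$), and the identity $\iota^{\ast}\cF\simeq\mathrm{id}$ from the previous theorem. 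Each motivic condition on $\cF(E)$ then reduces to the corresponding condition on $E$ over $\GBC$: if $X$ is flasque in $\GgBC$ with witness $f$, then $f$ preserves $X_b$ (since $f$ is close to $\mathrm{id}_X$ and bounded singletons thicken to bounded sets), so $f|_{X_b}$ witnesses the flasqueness of $X_b$ in $\GBC$; a complementary pair $(Z,\cY)$ on $X$ intersects with $X_b$ to give a complementary pair on $X_b$; $(\mathbb{I}\otimes X)_b=\mathbb{I}\otimes X_b$ since both points of $\mathbb{I}$ are bounded; and $(X_U)_b=X_b$, with the $\Gamma$-invariant entourages on $X$ restricting to a cofinal family on $X_b$.

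\textbf{Conclusion and main obstacle.} Combining the two steps yields the chain
\[
\GgSp\simeq\ShSpMot(\GgBC)\overset{\iota^{\ast}}{\simeq}\ShSpMot(\GBC)\simeq\GSp.
\]
The main technical obstacle is the verification in the second step that $\cF$ preserves the motivic conditions; this hinges on the decomposition $X\simeq X_b\amalg X_h$ together with the observation that $X_h$ contributes nothing to any motivic sheaf (being flasque by \thref{huge.is.flasque} and containing no bounded points). Once this compatibility is secured, the rest is a formal bookkeeping argument via the universal properties.
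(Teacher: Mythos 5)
Your proposal is correct and follows the paper's approach: identifying $\ShSpMot(\cC)$ with the respective category of motivic spectra via the universal-property chain (functors out of $\PShSp(\cC)$ that invert the localizing morphisms $\simeq$ coarse homology theories $\simeq$ colimit-preserving functors out of $\GSp$ resp.\ $\GgSp$), and combining this with the equivalence $\iota^*\colon\ShSp(\GgBC)\simeq\ShSp(\GBC)$ from the preceding theorem. You additionally spell out, via the computation $\cF(E)(X)\simeq E(X_b)$ and a case-by-case check that flasqueness, complementary pairs, the cylinder and the $X_U$-construction all pass to $X_b$, that $\iota^*$ and $\cF$ restrict to the motivic subcategories --- a step the paper treats only implicitly with the remark ``we localize both sheaf categories \dots\ and compare the universal properties.''
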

\begin{proof}
	For a category $\cC$ and a collection $\cC_0$ of objects in $\cC$ we denote by $\langle\cC_0\rangle$ the full subcategory spanned by $\cC_0$. Further we let
	$\cG$ denote one of the categories $\GBC$ or $\GgBC$ and accordingly let $\hat\cG$ denote $\GSp$ or $\GgSp$.\\
	For any stable presentable $\infty$-category $\cD$ we have the following chain of equivalences: %\todo{not compatible sagen}
	\begin{align*}
		\Fun^{\colim}(\ShSpMot(\cG),\cD)&\simeq \left\langle\left\{F\in\Fun^{\colim}(\PShSp(\cG),\cD)\ \middle|\ 
		\begin{matrix}F\text{ \footnotesize is compatible with the}\\\text{\footnotesize equivalences in \ref{mot.attr} and \ref{Sh.Cond}}\end{matrix}\right\}
		\right\rangle\\
		&\simeq \left\langle\left\{F\in\Fun^{\colim}(\Sp(\PSh(\cG)),\cD)\ \middle|\ 
		\begin{matrix}F\text{ \footnotesize is compatible with the}\\\text{\footnotesize equivalences in \ref{mot.attr} and \ref{Sh.Cond}}\end{matrix}\right\}
		\right\rangle\\
		&\simeq \left\langle\left\{F\in\Fun^{\colim}(\PSh(\cG),\cD)\ \middle|\ 
		\begin{matrix}F\text{ \footnotesize is compatible with the}\\\text{\footnotesize equivalences in \ref{mot.attr} and \ref{Sh.Cond}}\end{matrix}\right\}
		\right\rangle\\
		&\simeq\left\langle\left\{F\in\Fun(\cG,\cD)\ \middle|\ F\text{ fullfills properties in \ref{def.class.h.t}}\right\}\right\rangle\\
		&\simeq\Fun^{\colim}(\hat\cG,\cD).
	\end{align*}
	Here the first equivalence is due to \cite[Prop 5.5.4.20]{HTT}, the second follows from $\Fun(\cC,\Sp(\Spc))\simeq\Sp(\Fun(\cC,\Spc))$ by \cite[Rmk 1.4.2.9]{HA} and the
	third is due to \cite[Cor 1.4.4.5]{HA}. The last equivalence follows from \thref{UE.1,Sigma.adj} in the generalized case and from \cite[Cor 4.5]{Uli} in the classical case. 
	Therefore it remains to justify the forth equivalence:\\
	First we notice that compatibility with the equivalences in \thref{Sh.Cond} is already implied by the compatibility with those equivalences in \thref{mot.attr} (using
	\thref{huge.is.flasque,Xadd}). Now the equivalence follows from \cite[Thm 5.1.5.6]{HTT}.
\end{proof}

Let $\cC$ be a co-complete stable $\infty$-category. 
For the purpose of the following corollary we denote by $\boldsymbol{\mathcal X}\mathbf{-HomolTheo}(\cC)$ the full subcategory of $\Fun(\GBC,\cC)$ spanned by
coarse homology theories (cf. \cite[Def. 3.10]{equ.Uli}). Likewise we denote by $\boldsymbol{\widetilde{\mathcal X}}\mathbf{-}{\mathbf{HomolTheo}}$ the full subcategory
of $\Fun(\GgBC,\cC)$ spanned by (generalized) coarse homology theories (cf. \thref{def.class.h.t}).

\begin{cor}
	The inclusion functor $\GBC\to\GgBC$ induces an equivalence of categories
	\[
		\boldsymbol{\widetilde{\mathcal X}}\mathbf{-}{\mathbf{HomolTheo}} \overset\simeq\longrightarrow\boldsymbol{\mathcal X}\mathbf{-HomolTheo}(\cC).
	\]
\end{cor}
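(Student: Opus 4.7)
The plan is to assemble this corollary as a formal consequence of the preceding main theorem (the equivalence $\GgSp \overset\simeq\to \GSp$) together with the universal properties of the motivic categories $\GSp$ and $\GgSp$ that have already been established.

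First I would set up the two universal characterizations. Applying \thref{UE.1} to the generalized setting and composing with the adjunction of \thref{Sigma.adj}, precomposition with $\gYo^s\colon\GgBC\to\GgSp$ identifies $\mathbf{Fun}^{\colim}(\GgSp,\cC)$ with the full subcategory of $\mathbf{Fun}(\GgBC,\cC)$ of functors satisfying excision, coarse invariance, vanishing on flasques and $u$-continuity, i.e. with $\boldsymbol{\widetilde{\mathcal X}}\mathbf{-}\mathbf{HomolTheo}(\cC)$ in the notation of the corollary. The exact analogous statement for $\GBC$ (established in \cite{Uli}/\cite{equ.Uli}) identifies $\mathbf{Fun}^{\colim}(\GSp,\cC)$ with $\boldsymbol{\mathcal X}\mathbf{-HomolTheo}(\cC)$.

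Next I would invoke the main theorem of the previous subsection: the inclusion $\iota\colon\GBC\to\GgBC$ induces an equivalence $\GgSp\overset\simeq\to\GSp$. Passing to colimit-preserving functors into $\cC$ yields an equivalence
\[
\mathbf{Fun}^{\colim}(\GSp,\cC)\overset\simeq\longrightarrow\mathbf{Fun}^{\colim}(\GgSp,\cC).
\]
Combined with the two universal characterizations above, this produces an equivalence between the two categories of coarse homology theories.

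The only non-routine point is to verify that the equivalence so obtained is indeed the restriction functor $\iota^*$, so that a coarse homology theory $E\colon\GgBC\to\cC$ maps to $E\circ\iota$. For this I would chase through the construction: by naturality of the Yoneda-embedding, sheafification and stabilization functors constructed in the previous subsections, the square
\[
\xymatrix{\GBC\ar[r]^{\Yo^s}\ar[d]_\iota & \GSp\ar[d]^{\simeq}\\ \GgBC\ar[r]_{\gYo^s} & \GgSp}
\]
commutes up to canonical equivalence. Hence precomposition with $\gYo^s$ followed by the equivalence $\GgSp\overset\simeq\to\GSp$ agrees with precomposition with $\Yo^s\circ\iota$, which is exactly $\iota^*$ applied to precomposition with $\gYo^s$. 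The main obstacle is therefore not a computation but a careful bookkeeping argument showing this naturality square commutes; once that is in place the equivalence of homology theory categories drops out formally.
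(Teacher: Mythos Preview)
Your proposal is correct and matches the paper's intended argument: the corollary is stated without proof, as an immediate consequence of the main theorem $\GgSp\simeq\GSp$ together with the universal properties \thref{UE.1} and \thref{Sigma.adj} (and their classical analogues from \cite{Uli,equ.Uli}). Your additional care in checking that the resulting equivalence is really $\iota^*$ via the naturality square for $\Yo^s$ and $\gYo^s$ is a detail the paper leaves implicit, but it is the right thing to verify and your sketch of it is sound.
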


\vfill

\section*{Acknowledgments} \addtocontents{section}{Acknowledgments}
	The author deeply thanks his adviser Ulrich Bunke for providing the subject and his guidance and input 
	through writing this paper. Further the author is indebted to Markus Land for helpful comments and 
	discussions and his interest in the topic.
	
	\vspace{2cm}

	\printbibliography
\end{document}